\newtheorem{lemma}{Lemma}[section]
\newtheorem{theorem}[lemma]{Theorem}
\newtheorem{sublemma}{Claim}[lemma]
\newtheorem{corollary}[lemma]{Corollary}
\newtheorem{proposition}[lemma]{Proposition}
\newtheorem{question}[lemma]{Question}
\newtheorem{conjecture}[lemma]{Conjecture}
\theoremstyle{definition}
\crefname{sublemma}{Claim}{Claims}
\newcommand{\ba}{\backslash}
\newcommand{\seq}[1]{\{1,2,\dotsc,#1\}}
\newcommand{\lc}{\kappa} 
\newcommand{\fatras}{wheel morass} 
\newcommand\uthanks[1]{%
  \begingroup
  \renewcommand\footnotemark{}\footnote{#1}
  \endgroup
}
\title{Colouring graphs with constraints on connectivity%
  \uthanks{The first author was supported by Fondecyt Postdoctoral grant 3150314 of CONICYT Chile.
  The second, third, and fifth authors were partially supported by ANR project Stint under reference ANR-13-BS02-0007 operated by the French National Research Agency (ANR).
  The second and fifth authors were partially supported by ANR project Heredia under reference ANR-10-JCJC-0204, and by the LABEX MILYON (ANR-10-LABX-0070) of Universit\'e de Lyon, within the program ``Investissements d'Avenir'' (ANR-11-IDEX-0007) operated by the French National Research Agency (ANR).
  The fourth author was supported by the European Research Council (ERC)  grant ``PARAMTIGHT: Parameterized complexity and the search for tight complexity results,'' reference 280152 and OTKA grant NK105645.}
  \uthanks{\textit{Email addresses: }\texttt{pierreaboulker@gmail.com} (P.~Aboulker), \texttt{nbrettell@gmail.com} (N.~Brettell), \texttt{frederic.havet@cnrs.fr} (F.~Havet), \texttt{dmarx@cs.bme.hu} (D.~Marx), \texttt{nicolas.trotignon@ens-lyon.fr} (N.~Trotignon).}
}
\author[1]{Pierre Aboulker}
\affil[1]{Universidad Andres Bello, Santiago, Chile}
\author[2]{Nick Brettell}
\affil[2]{CNRS, LIP, ENS de Lyon}
\author[3]{Fr\'ed\'eric Havet}
\affil[3]{Project Coati, I3S (CNRS, UNS) and INRIA, Sophia Antipolis, France}
\author[4]{D\'aniel Marx}
\affil[4]{Institute for Computer Science and Control, Hungarian Academy of Sciences (MTA SZTAKI)}
\author[2]{Nicolas Trotignon}
\date{\today}
\begin{document}

\maketitle

\begin{abstract}
  A graph $G$ has \emph{maximal local edge-connectivity~$k$} if
the maximum number of edge-disjoint paths between every pair of distinct vertices $x$ and $y$ is at most $k$.
  We prove Brooks-type theorems for $k$-connected graphs with maximal local edge-connectivity~$k$, and for any graph with maximal local edge-connectivity~$3$.
  We also consider several related graph classes defined by constraints on connectivity.
  In particular, we show that there is
  a polynomial-time algorithm that, given a $3$-connected graph $G$ with maximal local connectivity~$3$, outputs an optimal colouring for $G$.
  On the other hand, 
  we prove, for $k \geq 3$, that \textsc{$k$-colourability} is NP-complete when restricted to minimally $k$-connected graphs,
  and \textsc{$3$-colourability} is NP-complete when restricted to $(k-1)$-connected graphs with maximal local connectivity~$k$.
  Finally, we consider a parameterization of \textsc{$k$-colourability} based on the number 
  of vertices of degree at least $k+1$, and prove that, even when $k$ is part of the input, the corresponding parameterized problem is FPT.
\end{abstract}

\textbf{Keywords: } colouring; local connectivity; local edge-connectivity; Brooks' theorem; minimally $k$-connected; vertex degree.


\section{Introduction}


We consider the problem of finding a proper vertex $k$-colouring for a graph for which, loosely speaking, the ``connectivity'' is somehow constrained.
For example, if we consider the class of graphs of degree at most $k$, then, by Brooks' theorem, it is easy to find if a graph in this class is $k$-colourable.

\begin{theorem}[Brooks, 1941]
  Let $G$ be a connected graph with maximum degree $k$.  Then $G$ is $k$-colourable if and only if $G$ is not a complete graph or an odd cycle.
\end{theorem}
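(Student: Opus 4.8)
The plan is to prove the easy direction directly and the hard direction by the greedy-colouring argument of Lov\'asz. For the ``only if'' direction, note that $K_{k+1}$ has maximum degree $k$ and chromatic number $k+1$, while an odd cycle (the case $k=2$) has chromatic number $3$; so neither is $k$-colourable. For the ``if'' direction, the engine is the following elementary fact, which I would isolate as a lemma: if $v_1,\dots,v_n$ is an ordering of $V(G)$ such that each $v_i$ with $i<n$ has a neighbour $v_j$ with $j>i$, then the greedy algorithm run in this order with palette $\{1,\dots,k\}$ colours $v_1,\dots,v_{n-1}$ without fail (each such $v_i$ has at most $k-1$ already-coloured neighbours), and it also colours $v_n$ provided two neighbours of $v_n$ have received the same colour.

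Now suppose $G$ is connected, $\Delta(G)=k$, and $G$ is neither complete nor an odd cycle; I may assume $k\ge 3$ since $k\le 2$ only leaves paths, even cycles, and the excluded graphs. If $G$ is not $k$-regular, I would pick $v$ with $\deg(v)<k$, order the vertices by non-increasing distance from $v$ via a BFS (so $v=v_n$), and apply the lemma: every vertex other than $v$ has a neighbour nearer to $v$, hence later in the order, and $v$ itself has fewer than $k$ neighbours. If $G$ is $k$-regular but has a cut vertex $v$, then each graph $G[D\cup\{v\}]$ ($D$ a component of $G-v$) has maximum degree at most $k$ and $v$ has degree below $k$ in it, so it is $k$-colourable by the previous case; permuting palettes so that $v$ gets colour $1$ in each piece and gluing gives a $k$-colouring of $G$. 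This reduces the problem to $G$ being $k$-regular, $2$-connected, and not complete, where I aim to find non-adjacent $v_1,v_2$ with a common neighbour $v_n$ such that $G-\{v_1,v_2\}$ is connected: then ordering $v_1,v_2$ first and the rest of $G-\{v_1,v_2\}$ by non-increasing distance from $v_n$ gives $v_1,v_2$ the same colour, so $v_n$ sees at most $k-1$ colours and the lemma finishes. When $G$ is $3$-connected this is easy: $G$ has two vertices at distance $2$, yielding a vertex $v_n$ with non-adjacent neighbours $v_1,v_2$, and $G-\{v_1,v_2\}$ is connected by $3$-connectivity.

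The remaining, and genuinely delicate, subcase is $G$ that is $2$-connected but not $3$-connected. Here I would fix a $2$-cut $\{x,y\}$; then $G-x$ is connected and has $y$ as a cut vertex, so it has at least two leaf blocks $B_1,B_2$. Using $2$-connectivity of $G$ together with $k$-regularity and $k\ge 3$ (which force every leaf block of $G-x$ to contain at least three vertices, and in particular to contain a non-cut-vertex of $G-x$ that is adjacent to $x$ in $G$), I would select $v_i\in B_i$ non-cut in $G-x$ with $v_i\sim x$ for $i=1,2$. Then $v_1\not\sim v_2$ (they lie in distinct blocks of $G-x$ and neither is a cut vertex, so no edge can join them), they have the common neighbour $v_n:=x$, and $G-\{v_1,v_2\}$ is connected: deleting the non-cut-vertices $v_1,v_2$ keeps $G-x$ connected, and $x$ still has a neighbour outside $\{v_1,v_2\}$ because $\deg_G(x)=k\ge 3$. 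This is the step I expect to be the main obstacle, both in getting the block-structure bookkeeping right and in the fact that all three hypotheses ($k\ge 3$, $k$-regular, not complete) are needed precisely here; the other cases are routine once the ordering lemma is set up.
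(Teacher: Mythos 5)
Your proposal is correct: it is essentially Lov\'asz's short proof of Brooks' theorem --- the greedy-colouring lemma along an ordering in which every vertex but the last has a later neighbour, the non-regular and cut-vertex reductions, and, in the regular $2$-connected case, two non-adjacent vertices $v_1,v_2$ with a common neighbour whose removal leaves the graph connected, found via a leaf-block analysis of $G-x$ when $G$ is not $3$-connected. The paper gives no proof of this statement (it cites Brooks, 1941), but it adapts exactly this argument in the proof of \cref{3c_nd}, so your route coincides with the one the paper relies on; the only step deserving one more line is that deleting $v_1$ and $v_2$ keeps $G-x$ connected, which needs not just that they are non-cut vertices but that they lie in distinct blocks (two non-cut vertices of the same block, e.g.\ opposite vertices of a $4$-cycle, can disconnect a graph), and the adjacency of each chosen $v_i$ to $x$ follows from $2$-connectivity of $G$ rather than from the leaf blocks having at least three vertices.
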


\noindent
On the other hand, if we consider the class of graphs with maximum degree $4$, then
the decision problem \textsc{$3$-colourability} is well known to be NP-complete, even when restricted to planar graphs~\cite{Garey1976}. Moreover,
for any fixed $k\geq 3$,
\textsc{$k$-colourability} is NP-complete. 

The classes we consider are defined using the notion of local connectivity.
The \emph{local connectivity} $\lc(x, y)$ of distinct vertices $x$ and $y$ in a graph is the maximum number of internally vertex-disjoint paths between $x$ and $y$.
The \emph{local edge-connectivity} $\lambda(x, y)$ of distinct vertices $x$ and $y$ is the maximum number of edge-disjoint paths between $x$ and $y$.
%
Consider the following classes:

\begin{itemize}
  \item ${\mathcal C}_0^k$: graphs with maximum degree~$k$,
  \item ${\mathcal C}_1^k$: graphs such that $\lambda(x, y) \leq k$ for all pairs of distinct vertices $x$ and $y$,
  \item ${\mathcal C}_2^k$: graphs such that $\lc(x, y) \leq k$ for all pairs of distinct vertices $x$ and $y$, and
  \item ${\mathcal C}_3^k$: graphs such that $\lc(x, y) \leq k$ for all edges $xy$. 
\end{itemize}

\noindent
In each successive class, the connectivity constraint is relaxed; that is,
$\mathcal{C}_{0}^k \subseteq \mathcal{C}_1^k \subseteq {\mathcal C}_2^k \subseteq {\mathcal C}_3^k$. 
For each class, there is a bound on the chromatic number; we give details shortly.
Note also that each of the four classes is closed under taking subgraphs.

A graph $G$ is \emph{$k$-connected} if it has at least $2$ vertices and $\lc(x,y) \geq k$ for all distinct $x,y \in V(G)$.
The \emph{connectivity} of a graph $G$ is the maximum integer $k$ such that $G$ is $k$-connected.
A graph contained in one of the above classes has connectivity at most $k$.
So, for each class, it may be of interest to 
start by considering
the graphs that have connectivity precisely $k$.
For each class $\mathcal{C}_i^k$, we denote by $\widehat{\mathcal{C}}_i^k$ the subclass containing the $k$-connected members of $\mathcal{C}_i^k$.
A Hasse diagram illustrating the partial ordering of these classes under set inclusion is given in \cref{hassepic}.

\begin{figure}
  \centering
  \begin{tikzpicture}
    \node (c2k) at (0,2) {$\mathcal{C}_3^k$};
    \node (ch2k) at (1.5,1) {$\widehat{\mathcal{C}}_3^k$};
    \node (c1k) at (0,0) {$\mathcal{C}_2^k$};
    \node (ch1k) at (1.5,-1) {$\widehat{\mathcal{C}}_2^k$};
    \node (c0k) at (0,-2) {$\mathcal{C}_1^k$};
    \node (ch0k) at (1.5,-3) {$\widehat{\mathcal{C}}_1^k$};
    \node (cn1k) at (0,-4) {$\mathcal{C}_{0}^k$};
    \node (chn1k) at (1.5,-5) {$\widehat{\mathcal{C}}_{0}^k$};
    \draw (c2k) -- (c1k) -- (c0k) -- (cn1k) -- (chn1k)
    (c2k) -- (ch2k) -- (ch1k) -- (ch0k) -- (chn1k)
    (c1k) -- (ch1k)
    (c0k) -- (ch0k);
  \end{tikzpicture}
  \caption{Hasse diagram of the graph classes defined by constraints on connectivity under $\subseteq$.}
  \label{hassepic}
\end{figure}
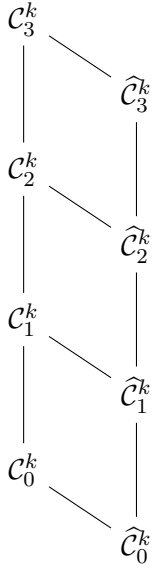

A graph in $\mathcal{C}_1^k$ is said to have \emph{maximal local edge-connectivity~$k$}. 
Our first main result is
a Brooks-type theorem for graphs with maximal local edge-connectivity~$k$.
An \emph{odd wheel} is a graph obtained from a cycle of odd length by adding a vertex that is adjacent to every vertex of the cycle.

\begin{theorem}
  \label{main}
  Let $G$ be a $k$-connected graph with maximal local edge-connectivity~$k$, for $k \geq 3$.  Then $G$ is $k$-colourable if and only if $G$ is not a complete graph or an odd wheel.
\end{theorem}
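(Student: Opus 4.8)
The plan is to handle the two directions separately: the forward one is immediate, and the reverse one rests on a structural analysis of the vertices of degree at least $k+1$.

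\emph{Forward direction.} If $G$ is complete then, being $k$-connected, it has at least $k+1$ vertices, and since $\lambda(x,y)=|V(G)|-1$ in a complete graph, the hypothesis $\lambda\le k$ forces $G=K_{k+1}$, which is not $k$-colourable. If $G$ is an odd wheel with hub $h$ and odd rim-cycle $C$, then $\chi(G)=\chi(C)+1=4$; for $k=3$ this exceeds $k$, and $G$ does belong to $\widehat{\mathcal C}_1^3$ because it is $3$-connected and, of any two of its vertices, at least one lies on the rim and hence has degree $3$, so $\lambda\le3$. For $k\ge4$ an odd wheel has rim vertices of degree $3<k$ and so is not $k$-connected, making that exception vacuous. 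Thus a complete graph or an odd wheel in the class is never $k$-colourable.

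\emph{Reverse direction, easy case.} Suppose now that $G\in\widehat{\mathcal C}_1^k$ is neither complete nor an odd wheel. Since $G$ is $k$-connected, $\delta(G)\ge k$, and together with the hypothesis this yields $\lc(x,y)=\lambda(x,y)=k$ for every pair $\{x,y\}$. If $\Delta(G)=k$ then $G$ is $k$-regular and Brooks' theorem completes the proof: $G$ is $k$-colourable unless it is $K_{k+1}$ (excluded) or an odd cycle (impossible, as odd cycles are not $k$-connected for $k\ge3$). So I may assume $\Delta(G)\ge k+1$ and set $H=\{v:\deg_G(v)\ge k+1\}\neq\emptyset$.

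\emph{Reverse direction, structural case.} The key preliminary fact is that a vertex of $H$ lies in a large shore of every $k$-edge-cut separating it from another vertex: if $v\in H$, $u\neq v$, and $[A,B]$ is a $k$-edge-cut with $v\in A$ and $u\in B$ (one exists since $\lambda(v,u)=k$), then $\deg_G(v)>k$ forces $|A|\ge2$ and $G[A]$ connected, and comparing $2e(G[A])+k=\sum_{a\in A}\deg_G(a)\ge(k+1)+(|A|-1)k$ with $e(G[A])\le\binom{|A|}{2}$ gives $|A|\ge k+1$. I would then argue by induction on $|H|$, with $|V(G)|$ as a secondary measure. If $|H|\ge2$: choose $u,v\in H$ and a $k$-edge-cut $[A,B]$ separating them, taken minimally so that contracting either shore to a single (degree-$k$) vertex yields a graph still lying in $\widehat{\mathcal C}_1^k$ — the delicate requirement being that every pair of vertices within a shore remains separated by a $k$-edge-cut that does not cross the other shore. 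Each contraction is strictly smaller, has fewer vertices of degree $\ge k+1$, and (its remaining shore having at least $k+1$ vertices) is not $K_{k+1}$; an odd-wheel contraction is disposed of directly. By induction both contracted graphs are $k$-colourable, and I glue the colourings together across the $k$ cut-edges, permuting colours on one side so that they agree. If $H=\{v\}$: then $G-v$ has maximum degree at most $k$, and using $k$-connectivity together with $G\neq$ odd wheel one checks that no component of $G-v$ is a $K_{k+1}$ or — when $k=3$ — an odd cycle, so $G-v$ is $k$-colourable; what remains is to produce such a colouring that leaves some colour free at $v$, which I would extract from a Brooks-style vertex ordering, exploiting that $N(v)$ is not a clique (otherwise $N[v]$ would induce a complete graph on at least $k+2$ vertices, forcing $\lambda>k$), so that $v$ has non-adjacent neighbours $a,b$ with $G-\{a,b\}$ connected, in combination with the structure of $G-v$.

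\emph{Expected main obstacle.} The substance of the proof is the structural case: showing that the $k$-edge-cut around a high-degree vertex can be chosen so that contracting its shores keeps the graph inside $\widehat{\mathcal C}_1^k$ and clear of $K_{k+1}$ and the odd wheels, so that the induction measure genuinely decreases, and so that the two shore-colourings can always be reconciled across the cut; and, in the base case $|H|=1$, extracting a $k$-colouring of $G-v$ that omits a colour on $N(v)$ despite $\deg_G(v)$ possibly being far larger than $k$. The forward direction and the case $\Delta(G)=k$ are routine.
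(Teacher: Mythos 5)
Your overall architecture does mirror the paper's (separate a high-degree vertex by a $k$-edge cut whose edges are pairwise vertex-disjoint, colour the two pieces, reconcile across the cut; handle the single-high-degree-vertex case by a Lov\'asz-style argument exploiting that $N(v)$ is not a clique), and your counting that a shore containing a vertex of degree $>k$ has at least $k+1$ vertices is exactly the computation in \cref{structurelemma2}. But the two steps you defer are precisely where the work lies, and as stated they are gaps rather than routine verifications. First, your reduction contracts a shore to a single vertex and invokes the induction hypothesis, which requires the contracted graph to be $k$-connected with maximal local edge-connectivity $k$ and to avoid the exceptional graphs. Preservation of $\lambda\le k$ can be salvaged by an uncrossing argument (a minimum $x$--$y$ edge cut can be chosen not to split the other shore), but preservation of $k$-connectivity is not clear: after contraction all connections through the shore funnel through one degree-$k$ vertex $b$, and for $k\ge 4$ a set of the form $Z^*\cup\{b\}$ with $|Z^*|\le k-2$ could disconnect $G[A]$ without contradicting anything you have proved (your degree count only forbids this when $k=3$). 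Moreover, if a contracted piece turns out to be an odd wheel it is not $k$-colourable, yet $G$ may well be ($ $the rim's parity can change under contraction$ $), so "disposed of directly" is not an argument; your induction then yields no colouring of that piece. The paper avoids both problems by not contracting: it adds a $k$-clique on the boundary set (\cref{close}, proved via the Fan Lemma) and colours the piece containing the unique high-degree vertex directly by \cref{3c_nd}, so no exceptional outcomes of an induction ever have to be excluded.

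Second, the gluing step you describe as "permuting colours on one side so that they agree" is a genuine lemma, not a formality: two boundary colourings cannot always be reconciled by a colour permutation (a monochromatic boundary against a rainbow one is an absolute obstruction), and showing this is the \emph{only} obstruction is the content of \cref{lnew}, whose proof in the paper runs Brooks' theorem on an auxiliary graph built from the boundary colour classes (a Hall-type argument would also do, but something must be proved). Your contraction does guarantee each boundary misses the colour of the contracted vertex, so the bad monochromatic/rainbow configuration could indeed be avoided this way --- but only once the gluing lemma is actually established. Finally, in the base case $|H|=1$ you need to treat the dominating-vertex situation separately (there a colouring of $G-v$ "leaving a colour free at $v$" means $G-v$ must be $(k-1)$-colourable, which is where Brooks' theorem produces exactly the complete graphs and odd wheels); your sketch conflates this with $k$-colourability of $G-v$ and does not produce the required colouring when $\deg(v)=n-1$.
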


\noindent
Note that an odd wheel is not $4$-connected, so the condition that $G$ is not an odd wheel is only required when $k=3$. 

Although every graph with maximum degree $k$ has maximal local edge-connectivity~$k$, 
\cref{main}
is not, strictly speaking, a generalisation of Brooks' theorem, since it
only concerns such graphs that are $k$-connected.
However, for $k=3$ we prove an extension of Brooks' theorem that characterises which graphs with maximal local edge-connectivity~$3$ are $3$-colourable,
with no requirement on $3$-connectivity.

Let $G_1$ and $G_2$ be graphs and, for $i \in \{1,2\}$,
let $(u_i,v_i)$ be an ordered pair of adjacent vertices of $G_i$. 
We say that the \emph{Haj\'os join} of $G_1$ and $G_2$ with respect to $(u_1,v_1)$ and $(u_2,v_2)$ is the graph obtained by deleting the edges $u_1v_1$ and $u_2v_2$ from $G_1$ and $G_2$, respectively, identifying the vertices $u_1$ and $u_2$, and adding a new edge joining $v_1$ and $v_2$.
A \emph{block} of a graph $G$ is a maximal connected subgraph $B$ of $G$ such that $B$ does not have a cut-vertex.

\begin{restatable}{theorem}{brooksgen}
  \label{thm:brooks3gen}
  Let $G$ be a graph with maximal local edge-connectivity~$3$.
  Then $G$ is $3$-colourable if and only if each block of $G$
  cannot be obtained from an odd wheel by performing a (possibly empty) sequence of Haj\'os joins with an odd wheel.
\end{restatable}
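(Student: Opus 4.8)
The plan is to reduce to the $2$-connected case via the block decomposition, dispatch the easy implication (every graph obtainable as described is not $3$-colourable) with the classical fact that a Haj\'os join of non-$3$-colourable graphs is non-$3$-colourable, and prove the hard implication (every non-$3$-colourable member of the class has a block obtainable as described) by induction on the order with \cref{main} as the base case. For the reduction, a graph is $3$-colourable if and only if each of its blocks is, and $\mathcal{C}_1^3$ is closed under taking subgraphs, so each block of a member of $\mathcal{C}_1^3$ again lies in $\mathcal{C}_1^3$; hence it suffices to prove the theorem for $2$-connected graphs $H$ with maximal local edge-connectivity $3$. Blocks isomorphic to $K_1$ or $K_2$ are $3$-colourable, and since any graph obtained from an odd wheel by Haj\'os joins with odd wheels has minimum degree at least $3$ (an easy induction on the number of joins), such blocks are never among the forbidden configurations.

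For the easy implication: if $H=W_0\bowtie W_1\bowtie\cdots\bowtie W_m$ with each $W_i$ an odd wheel, then $H$ is not $3$-colourable. Indeed an odd wheel is not $3$-colourable, and in any proper $3$-colouring $c$ of a Haj\'os join $G$ of $G_1,G_2$ with deleted edges $u_iv_i$ and new edge $v_1v_2$, the restriction of $c$ properly colours each $G_i-u_iv_i$, so non-$3$-colourability of $G_i$ forces $c(u_i)=c(v_i)$; as $u_1,u_2$ are identified this gives $c(v_1)=c(v_2)$, contradicting $v_1v_2\in E(G)$. Induction on $m$ finishes this direction.

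For the hard implication, let $H$ be $2$-connected with maximal local edge-connectivity $3$ and not $3$-colourable, and induct on $|V(H)|$. If $H$ is $3$-connected, then by \cref{main} (with $k=3$) $H$ is complete or an odd wheel; as $K_4$ is the odd wheel on a triangle, $H$ is an odd wheel, hence obtainable via the empty sequence. Otherwise $H$ has a $2$-cut $\{a,b\}$; write $V(H)=V_1\cup V_2$ with $V_1\cap V_2=\{a,b\}$, both $V_i\setminus V_{3-i}$ nonempty and no edge between $V_1\setminus V_2$ and $V_2\setminus V_1$, and set $H_i=H[V_i]$. Every $a$--$b$ path of $H$ lies in one $H_i$, so $\lambda_H(a,b)=\lambda_{H_1}(a,b)+\lambda_{H_2}(a,b)$, which is at most $3$ (hypothesis) and at least $2$ ($H$ is $2$-connected); hence, say, $\lambda_{H_1}(a,b)=1$, i.e.\ $H_1$ has an edge $e_1=pq$ whose removal separates $a$ from $b$ in $H_1$, with $H_1-e_1=A\sqcup B$, $\{a,p\}\subseteq A$, $\{b,q\}\subseteq B$. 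Also $H_i+ab\in\mathcal{C}_1^3$ for each $i$, since in a family of edge-disjoint paths of $H_i+ab$ at most one uses the new edge $ab$, and re-routing that edge through an $a$--$b$ path of $H_{3-i}$ gives edge-disjoint paths of $H$.

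The heart of the proof is to present $H$ as a Haj\'os join $H'\bowtie W$ with $W$ a \emph{single} odd wheel and $H'$ a $2$-connected graph in $\mathcal{C}_1^3$ of strictly smaller order. Granting this, $H'$ is not $3$-colourable: for an odd wheel $W$ and any edge $e$ of $W$, every $3$-colouring of $W-e$ assigns equal colours to the ends of $e$ (checked separately when $e$ is a spoke and when $e$ is a rim edge), so a $3$-colouring of $H'$, which colours its deleted edge properly, would extend to one of $H$; then by induction $H'=W_0\bowtie\cdots\bowtie W_{m-1}$ with the $W_i$ odd wheels, so $H=W_0\bowtie\cdots\bowtie W_{m-1}\bowtie W$ is obtainable. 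To build the decomposition, observe that $\{a,q\}$ — after possibly adjusting the choice of cut to sidestep degenerate cases, such as the virtual edge already lying in $H$ — is a $2$-cut of $H$ whose side $A\cup\{q\}$ has $q$ of degree $1$; reversing the Haj\'os construction at $\{a,q\}$ writes $H=H'\bowtie W$ with $W=H[A]+ap$, and both $W$ and $H'$ are $2$-connected, of smaller order, and in $\mathcal{C}_1^3$ by the re-routing argument. If $W$ is $3$-connected, \cref{main} identifies it as a complete graph or an odd wheel; combined with an analysis of $3$-colourings across the $2$-cut (since $\lambda_{H_1}(a,b)=1$, the part $A$ realizes both possible colourings of its boundary whenever it is $3$-colourable, so the non-$3$-colourability of $H$ must reside in $A$ or in $B$), this pins the non-$3$-colourable part down to a strictly smaller $2$-connected graph in $\mathcal{C}_1^3$, on which the induction hypothesis applies and from which $H$ is reassembled with a genuine odd wheel peeled off last. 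The main obstacle is precisely this structural step: organising the $2$-cut so that $H$ really factors as a Haj\'os join with a single odd wheel, handling the edge cases where the prospective virtual edge already lies in $H$, and — the subtle point — relocating the non-$3$-colourability to a strictly smaller subgraph in $\mathcal{C}_1^3$ when the outer $3$-connected piece happens to be $3$-colourable. All of this hinges on the additivity $\lambda_H(a,b)=\lambda_{H_1}(a,b)+\lambda_{H_2}(a,b)\le 3$, which forces one side of every $2$-cut to be attached through a bridge and thereby produces the Haj\'os structure.
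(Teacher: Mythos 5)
Your overall skeleton is reasonable and is essentially the contrapositive of the paper's argument: reduce to the $2$-connected case, handle the $3$-connected case with \cref{main}, and at a $2$-cut $\{a,b\}$ use $\lambda_H(a,b)=\lambda_{H_1}(a,b)+\lambda_{H_2}(a,b)\le 3$ to locate a bridge on one side and try to reverse a Haj\'os join there. The easy direction is fine. But the hard direction has a genuine gap exactly where you yourself say "the main obstacle" lies, and that obstacle is the actual content of the theorem. Nothing in your argument shows that the peeled piece $W=H[A]+ap$ is an odd wheel, or even that it fails to be $3$-colourable: non-$3$-colourability of $H$ does not "reside in $A$ or in $B$". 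Indeed, for a Haj\'os join of two odd wheels, both sides of the natural $2$-cut are $3$-colourable; the obstruction is only that their colourings force incompatible patterns (equal versus distinct colours) on the cut vertices. So the step "this pins the non-$3$-colourable part down to a strictly smaller $2$-connected graph in $\mathcal{C}_1^3$" is asserted, not proved, and the parenthetical claim that $A$ "realizes both possible colourings of its boundary whenever it is $3$-colourable" is false in general — it fails precisely when $H[A]+ap$ is non-$3$-colourable, which is the case your induction is supposed to detect. Likewise, the degenerate cases you wave at ("after possibly adjusting the choice of cut") are real: the prospective virtual edges $ap$ or $aq$ may already lie in $H$, one may have $p=a$ or $q=b$, the bridge side $A$ may carry no wheel structure at all (the wheels may sit on the other side of $\{a,b\}$ or be split across both sides of the bridge), and $W$ need not be $3$-connected, so \cref{main} cannot be invoked on it.

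In the paper, this is where the bulk of the proof goes: after fixing the $2$-separation and adding the virtual edge, one first handles the case where neither augmented side is a wheel morass by piecing colourings together; when one side $H_A$ is a wheel morass, Proposition~\ref{prop:fatras} (4-criticality and $\lambda\ge 3$ inside a morass) is used both to rule out $xy\in E(G)$ and to force $\lambda_{G_B}(x,y)\le 1$, and then a careful case analysis on the bridge $x'y'$ (whether $x=x'$ or $y=y'$, whether $xx'$ or $yy'$ are edges, whether the auxiliary graphs $H_x$, $H_y$, $J$ are themselves wheel morasses, each verified to have maximal local edge-connectivity $3$) either produces an explicit $3$-colouring of $G$ via colour-matching at the cut, or exhibits $G$ as a Haj\'os join of morasses (sometimes via two joins, not one). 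Your proposal would need to reproduce essentially all of this case analysis in contrapositive form — showing that every one of these colouring constructions fails only when the relevant pieces are non-$3$-colourable, hence morasses by induction, and that $H$ then factors correctly. As written, the induction does not go through.
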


For convenience, we call a graph that can be obtained from an odd wheel by performing a sequence of Haj\'os joins with odd wheels a {\it \fatras}.
Suppose that $G_1$ and $G_2$ are \fatras es.  It can be shown,
by a routine induction argument, 
that the Haj\'os join of
$G_1$ and $G_2$ is itself a \fatras.


It follows from \cref{main,thm:brooks3gen} that there is a polynomial-time algorithm that finds a $k$-colouring for a $k$-connected graph with maximal local edge-connectivity~$k$, or determines that no such colouring exists; and there is a polynomial-time algorithm for finding an optimal colouring of any graph with maximal local edge-connectivity~$3$.

A graph in ${\mathcal C}_2^k$ is also said to have \emph{maximal local connectivity~$k$}.
These graphs have been studied previously; primarily, the problem of determining bounds on the maximum number of possible edges in a graph with $n$ vertices and maximal local connectivity~$k$ has received much attention (see~\cite{Bollobas2004,Leonard1973,Mader1973a,Sorensen1974}).
Note that for a $k$-connected graph $G$ with maximal local connectivity~$k$ (that is, for $G$ in $\widehat{\mathcal{C}}_2^k$), we have $\lc(x,y) = k$ for all distinct $x, y \in V(G)$.
When $k=3$, it turns out that 
$\widehat{\mathcal{C}}_1^3 = \widehat{\mathcal{C}}_2^3$
(see \cref{structurelemma1}).
This leads to the following:

\begin{theorem}
  \label{colour_nl4c}
  Let $G$ be a $3$-connected graph with maximal local connectivity~$3$.  Then $G$ is $3$-colourable if and only if $G$ is not an odd wheel.  Moreover, there is a polynomial-time algorithm that finds an optimal colouring for $G$.
\end{theorem}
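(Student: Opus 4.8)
The plan is to derive the statement from \cref{main} together with the identity $\widehat{\mathcal{C}}_1^3=\widehat{\mathcal{C}}_2^3$ of \cref{structurelemma1}. Let $G$ be a $3$-connected graph with maximal local connectivity~$3$, so $G\in\widehat{\mathcal{C}}_2^3$; by \cref{structurelemma1} we also have $G\in\widehat{\mathcal{C}}_1^3$, that is, $G$ is a $3$-connected graph with maximal local edge-connectivity~$3$. Hence \cref{main} applies with $k=3$ and tells us that $G$ is $3$-colourable if and only if $G$ is neither a complete graph nor an odd wheel. To finish the equivalence I would observe that the only complete graph in $\widehat{\mathcal{C}}_2^3$ is $K_4$, and that $K_4$ is itself an odd wheel (a triangle with an added hub): since $\kappa(x,y)=n-1$ for every pair of vertices of $K_n$, membership of $K_n$ in $\widehat{\mathcal{C}}_2^3$ forces $n\le 4$, while $3$-connectivity forces $n\ge 4$. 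Thus the exceptions of \cref{main} reduce, inside our class, to the odd wheels, which is exactly the claim. (Alternatively one can run this through \cref{thm:brooks3gen}: a $3$-connected graph is a single block, and any non-empty sequence of Haj\'os joins with odd wheels creates a $2$-vertex-cut $\{u,v_1\}$ at the last join, whose two sides are non-trivial because every odd wheel has at least four vertices; hence the only $3$-connected \fatras\ is an odd wheel.)

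For the algorithmic part I would use a three-step procedure, each step running in polynomial time. First, decide whether $G$ is an odd wheel, by checking whether some vertex $v$ is adjacent to all of $V(G)\setminus\{v\}$ and $G-v$ is an odd cycle; if so then $\chi(G)=4$, since three colours on the rim and a fourth on $v$ suffice, while any proper $3$-colouring would force all three colours onto the odd rim and leave none for $v$, so we output such a $4$-colouring. Second, if $G$ is not an odd wheel then by the first part $G$ is $3$-colourable, so $\chi(G)\le 3$; test by breadth-first search whether $G$ is bipartite, and if so output an optimal $2$-colouring (optimal because $G$ has an edge). Third, if $G$ is non-bipartite and not an odd wheel then $\chi(G)=3$, and it remains to produce a proper $3$-colouring efficiently; for this I would invoke the constructive content of the proofs of \cref{main,thm:brooks3gen}, which (as recorded in the remark following \cref{thm:brooks3gen}) yields a polynomial-time algorithm that, given a graph with maximal local edge-connectivity~$3$, outputs an optimal colouring --- and $G$ lies in this class.

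The routine ingredients are the odd-wheel and bipartiteness tests and the elementary fact that an odd wheel has chromatic number~$4$; the characterization is then a short deduction. The genuine work lies in the polynomiality of the third step: one must inspect the proof of \cref{main} in the case $k=3$ (and, where convenient, that of \cref{thm:brooks3gen}) and verify that every reduction it performs --- splitting along small vertex cuts, undoing Haj\'os-type constructions, or whatever operations appear --- can be carried out in polynomial time, that partial colourings can be merged across these reductions in polynomial time, and that the recursion has polynomially bounded depth and branching. Granting this, the theorem follows; alternatively one could give a self-contained algorithm by re-running the proof of \cref{main} for $k=3$ while maintaining an explicit colouring at each reduction step.
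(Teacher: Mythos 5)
Your proposal is correct and follows essentially the same route as the paper: the paper derives \cref{colour_nl4c} directly from \cref{structurelemma1} (to pass from maximal local connectivity~$3$ to maximal local edge-connectivity~$3$), \cref{main} (noting, as you do, that the only complete graph in the class is $K_4$, an odd wheel), and the algorithmic \cref{algo}, whose proof supplies exactly the polynomial-time colouring procedure you defer to. Your extra bipartiteness check and the explicit $K_4$/Haj\'os-join observations are fine elaborations of what the paper leaves implicit.
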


\noindent
However, we give an example in \cref{secmlc3} to demonstrate that $\widehat{\mathcal{C}}_1^4 \neq \widehat{\mathcal{C}}_2^4$ (see \cref{fig:noedge5lc}).

The class $\widehat{\mathcal C}_3^k$ is well known.
A graph $G$ is \emph{minimally $k$-connected} if it is $k$-connected and the removal of any edge leads to a graph that is not $k$-connected.
It is easy to check that a graph is in $\widehat{\mathcal C}_3^k$ if and only if it is minimally $k$-connected (see, for example, \cite[Lemma 4.2]{Bollobas2004}).

We now review known results regarding the bounds on the chromatic number of these classes.
Mader proved that any graph with at least one edge contains a pair of adjacent vertices whose local connectivity is equal to the minimum of their degrees~\cite{Mader1973}.
It follows that any graph in ${\mathcal C}_3^k$ has a vertex of degree at most $k$.
This, in turn, implies that a graph 
in $\mathcal{C}_3^k$ 
is $(k+1)$-colourable.
In particular, minimally $k$-connected graphs, and graphs with maximal local connectivity~$k$, 
are all $(k+1)$-colourable.


Despite these results, it seems that, so far, the tractability of 
computing the chromatic number, 
or finding a $k$-colouring,
for a graph in one of these classes
has not been investigated.
For fixed~$k$, let \textsc{$k$-colouring} be the search problem that, given a graph $G$, finds a $k$-colouring for $G$, or determines that none exists.
An overview of our findings in this paper is given in \cref{overviewfig}, where we illustrate the complexity of \textsc{$k$-colouring} when restricted to the various classes defined by constraints on connectivity.

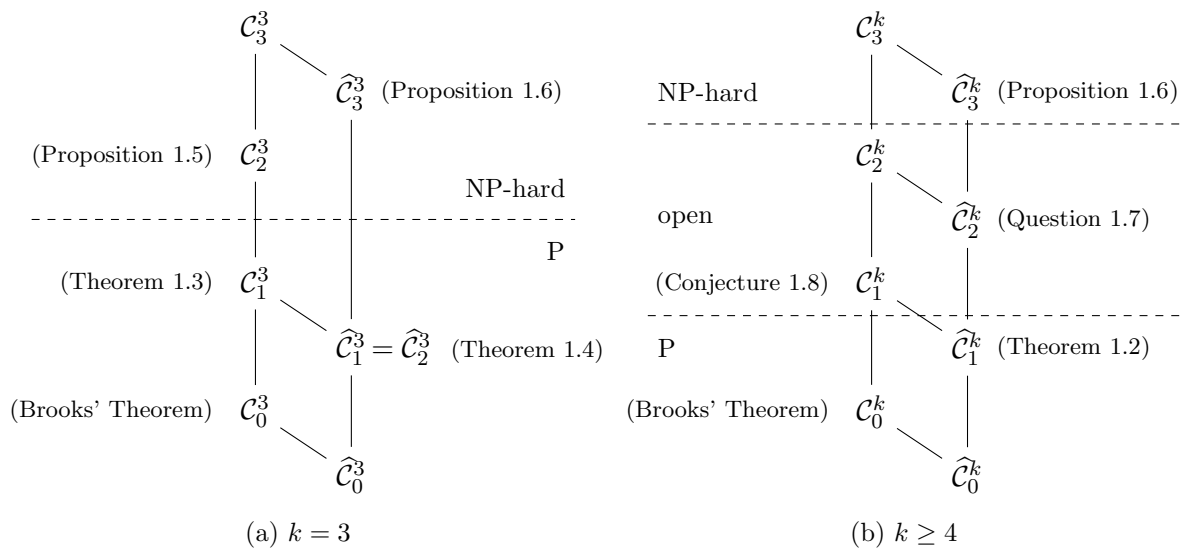
\begin{figure}
  \begin{subfigure}{0.48\textwidth}
    \centering
    \begin{tikzpicture}[scale=0.85]
      \node (c2k) at (0,2) {$\mathcal{C}_3^3$};
      \node (ch2k) at (1.5,1) {$\widehat{\mathcal{C}}_3^3$};
      \node[anchor=west] (ch2klabel) at (1.5,1) {\ \ {\footnotesize(\cref{m3c_npc})}};
      \node (c1k) at (0,0) {$\mathcal{C}_2^3$};
      \node[anchor=east] at (-0.5,0) {\footnotesize{(\cref{colour_n4c_2})}};
      \node (c0k) at (0,-2) {$\mathcal{C}_1^3$};
      \node[anchor=east] at (-0.5,-2) {\footnotesize{(\cref{thm:brooks3gen})}};
      \node (ch0k) at (1.5,-3) {$\widehat{\mathcal{C}}_1^3$};
      \node[anchor=west] (ch0klabel) at (1.5,-3) {$\ = \widehat{\mathcal{C}}_2^3$ \ {\footnotesize(\cref{colour_nl4c})}};
      \node (cn1k) at (0,-4) {$\mathcal{C}_{0}^3$};
      \node[anchor=east] at (-0.5,-4) {\footnotesize{(Brooks' Theorem)}};
      \node (chn1k) at (1.5,-5) {$\widehat{\mathcal{C}}_{0}^3$};
      \draw (c2k) -- (c1k) -- (c0k) -- (cn1k) -- (chn1k)
      (ch2k) -- (ch0k) -- (chn1k)
      (c2k) -- (ch2k)
      (c0k) -- (ch0k);
      \draw[dashed] (-3.5, -1) -- (5, -1);
      \node[anchor=east] at (5, -0.5) {\small NP-hard};
      \node[anchor=east] at (5, -1.5) {\small P};
    \end{tikzpicture}
    \subcaption{$k=3$}
  \end{subfigure}
  \begin{subfigure}{0.48\textwidth}
    \centering
    \begin{tikzpicture}[scale=0.85]
      \node (c2k) at (0,2) {$\mathcal{C}_3^k$};
      \node (ch2k) at (1.5,1) {$\widehat{\mathcal{C}}_3^k$};
      \node[anchor=west] (ch2klabel) at (1.5,1) {\ \ {\footnotesize(\cref{m3c_npc})}};
      \node (c1k) at (0,0) {$\mathcal{C}_2^k$};
      \node (ch1k) at (1.5,-1) {$\widehat{\mathcal{C}}_2^k$};
      \node[anchor=west] (ch1klabel) at (1.5,-1) {\ \ {\footnotesize(\cref{q8})}};
      \node (c0k) at (0,-2) {$\mathcal{C}_1^k$};
      \node[anchor=east] at (-0.5,-2) {\footnotesize{(\cref{q9})}};
      \node (ch0k) at (1.5,-3) {$\widehat{\mathcal{C}}_1^k$};
      \node[anchor=west] (ch0klabel) at (1.5,-3) {\ \ {\footnotesize(\cref{main})}};
      \node (cn1k) at (0,-4) {$\mathcal{C}_{0}^k$};
      \node[anchor=east] at (-0.5,-4) {\footnotesize{(Brooks' Theorem)}};
      \node (chn1k) at (1.5,-5) {$\widehat{\mathcal{C}}_{0}^k$};
      \draw (c2k) -- (c1k) -- (c0k) -- (cn1k) -- (chn1k)
      (ch2k) -- (ch1k) -- (ch0k) -- (chn1k)
      (c2k) -- (ch2k)
      (c1k) -- (ch1k)
      (c0k) -- (ch0k);
      \node[anchor=west] at (-3.5, 1) {\small NP-hard};
      \draw[dashed] (-3.5, 0.5) -- (5, 0.5);
      \node[anchor=west] at (-3.5, -1.0) {\small open};
      \draw[dashed] (-3.5, -2.5) -- (5, -2.5);
      \node[anchor=west] at (-3.5, -3) {\small P};
    \end{tikzpicture}
    \subcaption{$k \geq 4$}
  \end{subfigure}
  \caption{\textsc{$k$-colouring} complexity for graph classes defined by constraints on connectivity.}
  \label{overviewfig}
\end{figure}

If $k=1$, then ${\mathcal C}_3^k$ is the class of forests, so all the
classes are trivial.  For $k=2$, since it is easy to determine
if a graph is $2$-colourable, and 
all graphs in ${\mathcal C}_3^k$ are 3-colourable, we may compute the
chromatic number of any graph in ${\mathcal C}_3^k$ in polynomial time.


When $k=3$, \cref{colour_nl4c} implies that
\textsc{3-colouring} is polynomial-time solvable when restricted to $\widehat{\mathcal C}_2^3$.
For the class $\mathcal{C}_1^3$, this problem remains polynomial-time solvable, 
by \cref{thm:brooks3gen}.
  One might hope to generalise these results in one of two other possible directions: to the
class ${\mathcal C}_2^3$, or to
$\widehat{\mathcal C}_3^3$.  But any such attempt 
is likely to fail, due to
the following results (see \cref{secmlc3,secnpc2} respectively):

\begin{proposition}
  \label{colour_n4c_2}
  For fixed $k\geq 3$, the problem of deciding if a
  $(k-1)$-connected graph with maximal local connectivity~$k$ is
  $3$-colourable is NP-complete.
\end{proposition}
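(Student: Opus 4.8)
The plan is to reduce from \textsc{$3$-colourability} on general graphs (or, to keep degrees bounded, on graphs of maximum degree $4$, which is already known to be NP-complete), producing from an arbitrary instance $H$ a new graph $G$ that is $(k-1)$-connected, has maximal local connectivity exactly $k$, and is $3$-colourable if and only if $H$ is. Membership in NP is immediate since a $3$-colouring is a polynomial certificate, so the work is entirely in the hardness reduction.

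First I would handle the "boost the local connectivity up to $k$" direction. The idea is to attach gadgets that force $\kappa(x,y)\le k$ to be tight for some pairs without disturbing $3$-colourability; a clean way is to take $H$ and form a join- or cone-type construction with a small fixed graph on $k-3$ vertices that is itself $3$-colourable and whose addition both raises the minimum degree and raises local connectivities. Concretely, adding $k-3$ universal vertices (a "$\overline{K_{k-3}}$ joined to everything", or a path on $k-3$ vertices joined to everything) adds $k-3$ to $\kappa(x,y)$ for every pair, is $(k-1)$-connected as long as the base graph is connected, and only increases the chromatic number by at most $k-3$ provided the added part is independent or bipartite with care — but this naively pushes $\chi$ from $3$ to $k$, which is the wrong target. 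So instead I would keep the colouring question at $3$ colours by \emph{not} joining universally: replace each vertex of $H$ by a suitable constant-size gadget and interconnect gadgets so that (i) every pair of vertices in the same gadget, and every pair in adjacent gadgets, has exactly $k$ internally-disjoint paths, using short internal routes, while (ii) far-apart vertices are separated by a cutset of size $k$, and (iii) the gadget is rigid enough that its only proper $3$-colourings are the ones encoding "this vertex gets colour $c$", with the interconnection edges propagating a proper colouring of $H$. This is the standard "local replacement" technique; the gadget can be built from odd wheels or from $K_4$-like pieces whose $3$-colourings are rigid.

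Then I would verify the three structural requirements in turn. For \emph{$(k-1)$-connectivity}: since every gadget has $\ge k-1$ internally-disjoint paths to its neighbours and the underlying graph $H$ is connected (we may assume this without loss of generality, handling components separately), Menger's theorem gives $\kappa_G(x,y)\ge k-1$ for all $x,y$. For \emph{maximal local connectivity $k$}: the upper bound $\kappa(x,y)\le k$ must be checked by exhibiting, for every pair $x,y$, a vertex cut of size $k$ separating them — this is where I would exploit that $H$ has bounded degree, so the "interface" between any gadget and the rest of $G$ has bounded size, allowing a cut of size exactly $k$ (we size the gadgets so the interface is exactly $k$); and we engineer at least one pair realising $\kappa = k$ so the local connectivity is \emph{maximal} equal to $k$, not smaller. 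For \emph{colouring equivalence}: a proper $3$-colouring of $G$ restricts, via the rigid gadgets, to a proper $3$-colouring of $H$, and conversely any $3$-colouring of $H$ extends by filling in each gadget in the prescribed way.

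The main obstacle I anticipate is the simultaneous balancing act in requirement (ii): the gadget must be \emph{rigid} under $3$-colouring (so that it genuinely transmits the colour constraints of $H$) while also having its local connectivity capped at exactly $k$ — rigidity tends to want dense, highly-connected gadgets (like cliques or wheels), whereas the cap $\kappa\le k$ wants sparse interfaces. Reconciling these, and in particular choosing gadget interfaces of size exactly $k$ through which all "long" paths are forced to pass, so that no pair of vertices accidentally achieves $\kappa > k$ via unexpected routings through several gadgets, is the delicate part; I expect the construction to need the bounded-degree hypothesis on $H$ in an essential way precisely to keep these interfaces small, and I would present the cut for each pair $x,y$ explicitly (typically the $k$-vertex interface of the gadget containing $x$, or a $k$-vertex separator induced by a small edge cut of $H$) to certify $\kappa_G(x,y)\le k$.
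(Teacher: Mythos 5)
Your high-level plan --- reduce from \textsc{3-colourability}, replace vertices by constant-size colour-rigid gadgets, and certify $\kappa(x,y)\le k$ via explicit small cuts --- is in the same spirit as the paper's proof, but as written it has two genuine gaps. First, the gadget is never actually constructed, and one of the two building blocks you suggest cannot work: an odd wheel is $4$-chromatic, so any gadget containing one destroys $3$-colourability of the reduction graph outright. The rigidity device that does work (and that the paper uses) is the diamond, $K_4$ minus an edge, whose two pick vertices are forced to receive the same colour in every $3$-colouring. The paper builds its gadgets entirely from diamonds: for $k=3$, ``hub gadgets'' obtained from a cubic tree by substituting diamonds and serial diamond pairs for its edges, and for $k\ge 4$, gadgets $G_{l,k}$ in which $b_1,\dotsc,b_{k-1}$ are pick vertices of diamonds through $u_1,u_2$ and every vertex has degree at most $k$ --- so the cap on local (edge-)connectivity comes for free from the degree bound, with no per-pair cut argument needed. (Also, the class ``maximal local connectivity $k$'' only requires $\kappa(x,y)\le k$ for all pairs, so engineering a pair that attains exactly $k$ is unnecessary.)

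Second, your $(k-1)$-connectivity step fails as argued: connectedness of $H$ together with ``each gadget has $\ge k-1$ internally disjoint paths to its neighbouring gadgets'' does not yield $\kappa_G\ge k-1$. A cut vertex or bridge of $H$, or a low-degree vertex of $H$ whose gadget meets the rest of $G$ in fewer than $k-1$ vertices, produces a small separator, and whether this happens depends entirely on interconnection details you leave unspecified --- precisely the ``delicate part'' you flag but do not resolve. The paper resolves it differently: it first proves a lemma that $3$-colourability remains NP-complete on $(k-1)$-connected graphs (by repeatedly duplicating a vertex of a small cut), reduces from that restricted problem, replaces only the vertices of degree greater than $k$, and then gives a careful case analysis showing that attaching each gadget preserves $(k-1)$-connectivity (using that the gadget plus a clique on its outlets is $(k-1)$-connected). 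Without such a preprocessing lemma, or a fully specified and verified interconnection scheme, the connectivity half of your reduction does not go through; only the $k=3$ case, where merely $2$-connectedness is needed, can start from an essentially arbitrary instance as you propose.
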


\begin{proposition}
  \label{m3c_npc}
  For fixed $k\geq 3$, the problem of deciding if
  a minimally $k$-connected graph is $k$-colourable is NP-complete.
\end{proposition}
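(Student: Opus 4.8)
The plan is to establish membership in NP and then NP-hardness by a polynomial-time reduction from \textsc{$k$-colourability}, which is NP-hard for every fixed $k\ge 3$. Membership is routine: a proper $k$-colouring is a certificate of polynomial size, and both $k$-connectivity and the minimality of the edge set can be verified in polynomial time by computing local connectivities (via max-flow) in $G$ and in each $G-e$.

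For the hardness, I would take an instance $G$ of \textsc{$k$-colourability}---which, by standard reductions, we may assume to be connected---and build a graph $G'$ by replacing each vertex $v$ of $G$ with a \emph{vertex gadget} $\Gamma_v$ that carries one ``port'' vertex for every edge of $G$ at $v$, and replacing each edge $uv$ of $G$ with a single edge joining the port of $\Gamma_u$ at $v$ to the port of $\Gamma_v$ at $u$. Each gadget is assembled from small pieces: copies of $K_{k-1}$ joined completely to an independent set of ports (in a $k$-colouring the $K_{k-1}$ uses $k-1$ colours, so each such port is forced onto the one remaining colour, hence all of them agree), and copies of $K_{k+1}$ with one edge removed (which forces the two endpoints of the removed edge to receive the same colour). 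These are arranged so that in every proper $k$-colouring of $G'$ all ports of a given $\Gamma_v$ get a common colour $c(v)$, while the inter-gadget edges enforce $c(u)\ne c(v)$ whenever $uv\in E(G)$. Consequently $G'$ is $k$-colourable if and only if $G$ is.

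To obtain minimality, I would use the following observation. If $H$ is $k$-connected and its vertices of degree greater than $k$ form an independent set---equivalently, every edge of $H$ has an endpoint of degree exactly $k$---then $H$ is minimally $k$-connected: deleting an edge $uv$ with $\deg_H(v)=k$ leaves $v$ with exactly $k-1$ neighbours, and those $k-1$ vertices separate $v$ from the rest of $H-uv$, so $H-uv$ is not $k$-connected. It therefore suffices to design the gadgets so that $G'$ is $k$-connected and every vertex of degree exceeding $k$ has all its neighbours among degree-$k$ vertices. The ports and the auxiliary vertices making up each gadget can be given degree exactly $k$ (a port gets $k-1$ neighbours inside its gadget and one inter-gadget edge), and the only ``heavy'' vertices---the hubs that bind together the pieces of a gadget---are made adjacent only to degree-$k$ auxiliary vertices.

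The step I expect to be the main obstacle is guaranteeing that $G'$ is genuinely $k$-connected. The most obvious equality-forcing device---hanging a copy of $K_{k+1}$ with one edge removed off a pair of ports---is fatal here, since its interior attaches to only two vertices of $G'$ and hence creates a $2$-separator, destroying $3$-connectivity. One is thus constrained to use only gadget pieces whose interiors attach to at least $k$ vertices (or have no interior at all, like a bare edge), so the constraint ``all ports of $\Gamma_v$ share a colour'' has to be realised by pieces glued simultaneously to many ports rather than by a chain of pairwise equalities, and one must still keep the heavy vertices pairwise non-adjacent. Carrying this out, and then checking that no $(k-1)$-separator survives anywhere in $G'$ (which is where connectedness of $G$ is used), is the delicate part; given that, the colouring equivalence and the minimality criterion above finish the proof.
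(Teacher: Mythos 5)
Your NP-membership argument and your minimality criterion (a $k$-connected graph in which every edge has an endpoint of degree exactly $k$ is minimally $k$-connected) are both correct and are exactly what the paper uses at the end of its proof. But the heart of the reduction --- a concrete gadget construction that simultaneously achieves the colouring equivalence, $k$-connectivity, and the degree condition --- is not actually supplied, and the two forcing devices you do name cannot be made to work. The $K_{k+1}$-minus-an-edge equaliser you rightly discard yourself. The other piece, a $K_{k-1}$ joined completely to a set of ports, is caught in exactly the tension you describe but do not resolve: if it is attached to at least $k$ ports (as needed so that its interior does not sit behind a small separator), then each clique vertex has degree $(k-2)+(\text{number of ports})>k$, and since these heavy vertices are pairwise adjacent, the resulting clique edges have no endpoint of degree $k$, destroying minimality; if instead it is attached to only two ports so that its vertices have degree exactly $k$, its interior hangs off a $2$-separator and $k$-connectivity fails. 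Nor can you fix this by replacing the clique with an independent set of $k-1$ hubs joined to all ports (which would keep heavy vertices non-adjacent and restore connectivity), because then the hubs need not receive $k-1$ distinct colours and the ports are no longer forced to a common colour. So the "delicate part" you defer is precisely where the known clique-based forcing gadgets break down, and no alternative is exhibited.

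The paper escapes this bind by not forcing equality at all. It reduces from \textsc{$k$-uniform hypergraph $k$-colourability} (shown NP-complete by a separate easy reduction from \textsc{$k$-colourability}, where degree constraints are irrelevant). The original hypergraph vertices are kept as single, possibly heavy, vertices forming an independent set; each hyperedge $e$ gets a gadget $P_e$ consisting of a $K_k$ matched to the $k$ vertices of $e$, which encodes only the weaker constraint "not monochromatic" --- and this weaker constraint, unlike pairwise inequality between two heavy vertices or equality among ports, is encodable with gadget vertices all of degree exactly $k$. Additional bipartite gadgets $Q_l$ (copies of $K_{k,k-1}$ matched cyclically to the original vertices), which impose no colouring constraint whatsoever, are added purely to raise the connectivity to $k$; again all their vertices have degree exactly $k$. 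Since every edge then meets a degree-$k$ gadget vertex, minimality follows from the criterion you stated. If you want to salvage your vertex-splitting approach, you would need a genuinely new equality-forcing gadget compatible with these degree and connectivity constraints; the paper's route shows it is simpler to change the source problem so that no such gadget is needed.
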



Now consider when $k \geq 4$.
It follows from \cref{main} that
\textsc{$k$-colouring} is polynomial-time solvable when restricted to $\widehat{\mathcal C}_1^k$. 
However, the complexity for the more general class $\widehat{\mathcal C}_2^k$ remains an interesting open problem:
\begin{question}
  \label{q8}
  For fixed $k\geq 4$, 
  is there a polynomial-time algorithm that, given a $k$-connected
  graph $G$ with maximal local connectivity~$k$,
  finds a $k$-colouring of $G$, or determines that none exists?
\end{question}

\noindent
We also show that \textsc{$3$-colourability} is NP-complete for a graph in $\mathcal{C}_1^k$, when $k\geq 4$, so computing the chromatic number for a graph in this class, or in $\mathcal{C}_2^k$, is NP-hard, as is \textsc{$3$-colouring}.  However, the complexity of \textsc{$k$-colouring} (or \textsc{$k$-colourability}) for these classes is unresolved.  We make the following conjecture:

\begin{conjecture}
  \label{q9}
  For fixed $k\geq 4$, 
  there is a polynomial-time algorithm that, given a 
  graph $G$ with maximal local edge-connectivity~$k$,
  finds a $k$-colouring of $G$, or determines that none exists.
\end{conjecture}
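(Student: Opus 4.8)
\textbf{A proof strategy for \cref{q9}.}
The natural approach is to try to lift the proof of \cref{thm:brooks3gen} from $k=3$ to arbitrary fixed $k$. The first, routine, reduction is to the $2$-connected case: a graph is $k$-colourable if and only if each of its blocks is, each block again lies in $\mathcal{C}_1^k$ since the class is closed under subgraphs, and the block decomposition can be computed in linear time, so it suffices to colour $2$-connected graphs in $\mathcal{C}_1^k$. If a $2$-connected $G\in\mathcal{C}_1^k$ is in fact $k$-connected, we are done by \cref{main}: for $k\ge 4$ an odd wheel is not $k$-connected and the only $k$-connected complete graph in $\mathcal{C}_1^k$ is $K_{k+1}$, so $G$ is $k$-colourable unless $G=K_{k+1}$, and in the colourable case a colouring can be produced in polynomial time since the proof of \cref{main} can be made constructive. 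Hence the heart of the matter is a $2$-connected graph $G\in\mathcal{C}_1^k$ possessing a separator $S$ with $2\le|S|\le k-1$.

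At such a separator one wants to decompose $G$ into the pieces $H_i=G[A_i\cup S]$, where $A_1,\dots,A_t$ are the components of $G-S$, and recurse. The invariant to carry for each piece is the set $\mathcal{F}(H_i)\subseteq[k]^S$ of colourings of the boundary $S$ that extend to a proper $k$-colouring of $H_i$; then $G$ is $k$-colourable if and only if $\bigcap_i\mathcal{F}(H_i)\neq\emptyset$, and a colouring of $G$ can be assembled from one witness per piece. As $|S|\le k-1$ is bounded, $[k]^S$ has bounded size, so the only obstacle is computing each $\mathcal{F}(H_i)$ in polynomial time by recursion. The Haj\'os join enters exactly here: when $S=\{a,b\}$ with $ab\notin E(G)$, adding the edge $ab$ to each $H_i$ exhibits $G$ as an iterated Haj\'os join of the (enlarged) pieces, matching the structure in \cref{thm:brooks3gen}; for larger $S$ one expects the non-$k$-colourable members of $\mathcal{C}_1^k$ to be iterated Haj\'os joins built from copies of $K_{k+1}$, intersected with $\mathcal{C}_1^k$, so that the algorithm reduces to recognising such ``$k$-morasses''.

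The main difficulty — and the reason this remains a conjecture — is that the rigid structural picture available for $k=3$ degenerates for $k\ge4$ in two ways. First, for $|S|>2$, or whenever one tries to pin down the colouring of $S$ by attaching a gadget (or adding edges inside $S$), the modification can create a pair of vertices joined by $k+1$ edge-disjoint paths, pushing the graph out of $\mathcal{C}_1^k$; so one must either argue that a separator can always be chosen to keep all pieces inside the class, or compute $\mathcal{F}(H_i)$ with no gadget at all, which amounts to a precoloured-extension problem that \cref{main} does not directly resolve. Second, under a Haj\'os join the identified vertex can acquire degree as large as $2(k-1)$, and iterating can place two such high-degree vertices near one another, so the family of non-$k$-colourable graphs in $\mathcal{C}_1^k$ is far less constrained than the wheel morasses of \cref{thm:brooks3gen} and has not been classified. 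Supplying such a classification — or, alternatively, a direct polynomial recognition algorithm for ``$G\in\mathcal{C}_1^k$ is not $k$-colourable'' that avoids a full structure theorem — is the crux; and the NP-hardness of $3$-colourability within $\mathcal{C}_1^k$ for $k\ge4$ shows that any such argument must genuinely exploit that we use exactly $k$ colours (a Brooks-type phenomenon) rather than only the $k$-degeneracy of the class.
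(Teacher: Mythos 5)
You have not given a proof, and the paper does not contain one either: \cref{q9} is stated as a conjecture, left open in the paper (the authors only remark that Stiebitz and Toft have announced a resolution in~\cite{Stiebitz2016}), so there is no proof of the paper's to compare yours against. What you have written is a plan, and you say so yourself. The sound parts are exactly the parts that mirror the $k=3$ argument of \cref{thm:brooks3gen}: the reduction to blocks, the appeal to \cref{main} in the $k$-connected case (correct, and the colouring there is constructive via \cref{algo}), and the observation that for a $2$-separator $\{a,b\}$ with $ab\notin E(G)$ one can add the edge $ab$ to each piece and stay inside $\mathcal{C}_1^k$ because $2$-connectivity supplies an $ab$-path through the other side — this is precisely how the paper handles $H_A$, $H_B$ in the proof of \cref{thm:brooks3gen}.

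The genuine gap is the entire core of the argument, which you flag but do not fill: for a separator $S$ with $3\le|S|\le k-1$ you have no way to compute, in polynomial time, the set $\mathcal{F}(H_i)$ of boundary colourings of $S$ that extend into a piece $H_i$. The clique-filling trick of \cref{close} works in the paper only for vertex-disjoint $k$-\emph{edge} cuts (\cref{structurelemma2,lnew}), not for small vertex separators, and as you note any gadget pinning the colours of $S$ can push a piece out of $\mathcal{C}_1^k$; computing $\mathcal{F}(H_i)$ without a gadget is a precolouring-extension problem that neither \cref{main} nor \cref{thm:brooks3gen} resolves. Likewise, the hoped-for classification of the non-$k$-colourable members of $\mathcal{C}_1^k$ as ``$k$-morasses'' built from $K_{k+1}$ by Haj\'os joins is asserted only as an expectation; \cref{prop:fatras} and its $4$-criticality argument do not extend automatically, since (as you observe) identified vertices can reach degree $2(k-1)$ and the local edge-connectivity bookkeeping after repeated joins is not controlled by the paper's lemmas. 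So the submission correctly identifies where the difficulty lies, but it establishes nothing beyond what \cref{main} and \cref{thm:brooks3gen} already give; the conjecture itself remains unproved by this route.
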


\noindent
Stiebitz and Toft have recently announced a resolution to this conjecture in the affirmative~\cite{Stiebitz2016}.

It is worth noting that the class $\widehat{\mathcal C}_1^k$ is non-trivial.
All $k$-connected $k$-regular graphs are members of the class, as are $k$-connected graphs with $n-1$ vertices of degree~$k$ and a single vertex of degree more than $k$.
A member of the class
can have arbitrarily many vertices of degree at least~$k+1$.
To see this for $k=3$, consider a graph $G_{3,x}'$, for $x \geq 3$, that is obtained from a grid graph $G_{3,x}$ (the Cartesian product of path graphs on $3$ and $x$ vertices) by adding two vertex-disjoint edges linking vertices of degree~$2$ at distance~$2$.  The graph $G_{3,x}'$ is in $\widehat{\mathcal C}_1^3$, and has $x-2$ vertices of degree~$4$.
A similar example can be constructed for any $k>3$; for example, see \cref{figk4} for when $k=4$. 
\begin{figure}
  \centering
  \begin{tikzpicture}[scale=1]
    \tikzset{VertexStyle/.append style = {draw,minimum height=7,minimum width=7,fill=black}}
    \SetVertexNoLabel
    \Vertex[x=-1,y=1.5]{z}
    \Vertex[x=0,y=0]{a1}
    \Vertex[x=0,y=1]{a2}
    \Vertex[x=0,y=2]{a3}
    \Vertex[x=0,y=3]{a4}
    \Vertex[x=2,y=0]{b1}
    \Vertex[x=2,y=1]{b2}
    \Vertex[x=2,y=2]{b3}
    \Vertex[x=2,y=3]{b4}
    \Vertex[x=2,y=4]{b5}
    \Vertex[x=4,y=0]{c1}
    \Vertex[x=4,y=1]{c2}
    \Vertex[x=4,y=2]{c3}
    \Vertex[x=4,y=3]{c4}
    \Vertex[x=4,y=4]{c5}
    \Vertex[x=6,y=0]{d1}
    \Vertex[x=6,y=1]{d2}
    \Vertex[x=6,y=2]{d3}
    \Vertex[x=6,y=3]{d4}
    \Vertex[x=7,y=1.5]{z2}

    \Edge(z)(a1)
    \Edge(z)(a2)
    \Edge(z)(a3)
    \Edge(z)(a4)
    \Edge(a1)(a2)
    \Edge(a2)(a3)
    \Edge(a3)(a4)
    \Edge(a1)(b1)
    \Edge(a2)(b2)
    \Edge(a3)(b3)
    \Edge(a4)(b4)
    \Edge(b1)(b2)
    \Edge(b2)(b3)
    \Edge(b3)(b4)
    \Edge(b4)(b5)
    \Edge(b2)(c2)
    \Edge(b3)(c3)
    \Edge(b4)(c4)
    \Edge(b5)(c5)
    \Edge(c1)(c2)
    \Edge(c2)(c3)
    \Edge(c3)(c4)
    \Edge(c4)(c5)
    \Edge(z2)(d1)
    \Edge(z2)(d2)
    \Edge(z2)(d3)
    \Edge(z2)(d4)
    \Edge(d1)(d2)
    \Edge(d2)(d3)
    \Edge(d3)(d4)
    \tikzset{EdgeStyle/.append style = {dashed}}
    \Edge(c1)(d1)
    \Edge(c2)(d2)
    \Edge(c3)(d3)
    \Edge(c4)(d4)
    \tikzset{EdgeStyle/.append style = {solid,bend left}}
    \Edge(a1)(a4)
    \Edge(b1)(b5)
    \Edge(c1)(c5)
    \Edge(d1)(d4)
    \tikzset{EdgeStyle/.append style = {bend right}}
    \Edge(b1)(b3)
    \Edge(b3)(b5)
    \Edge(c1)(c3)
    \Edge(c3)(c5)
  \end{tikzpicture}
  \caption{A $4$-connected graph with maximal local edge-connectivity~$4$, and arbitrarily many vertices of degree more than $4$.}
  \label{figk4}
\end{figure}
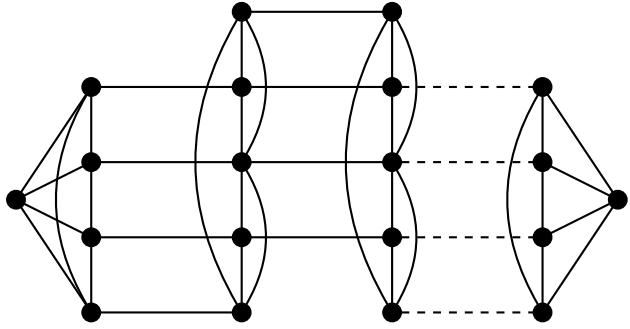

\medskip

Finally, we consider a parameterization of \textsc{$k$-colouring} based on the number $p_k$
  of vertices of degree at least $k+1$.
  By Brooks' theorem, a graph $G$ for which $p_k(G) = 0$ can be $k$-coloured in
polynomial time, unless it is a complete graph or an odd cycle.
We extend this to larger values of $p_k$, showing that,
  even when $k$ is part of the input,
  finding a $k$-colouring for a graph is fixed-parameter tractable (FPT) when parameterized by $p_k$.



\begin{theorem}
  \label{fewbigvs}
  Let $G$ be a graph with at most $p$ vertices of degree more than
  $k$.  There is a $\min\{k^p,p^p\} \cdot O(n+m)$-time algorithm for $k$-colouring $G$,
  or determining no such colouring exists.
\end{theorem}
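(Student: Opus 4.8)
The plan is to reduce the problem, via a bounded amount of branching, to a single subroutine: list-colouring a graph $H$ in which every vertex $v$ has a list of size at least $d_H(v)$. First I would set $B$ to be the set of vertices of $G$ of degree more than $k$, so that $|B|\le p$ by hypothesis. If $G$ admits a $k$-colouring $c$, then $c$ uses at most $|B|\le p$ colours on $B$, hence at most $\min(p,k)$ of the $k$ colours; after permuting the palette we may assume these colours lie in $\seq{\min(p,k)}$. So it suffices, over all functions $\phi\colon B\to\seq{\min(p,k)}$ — there are at most $\min(p,k)^p=\min\{k^p,p^p\}$ of them — to decide, for each $\phi$ that properly colours $G[B]$, whether $\phi$ extends to a $k$-colouring of $G$. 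For a fixed such $\phi$, an extension of $\phi$ to $V(G)\setminus B$ is precisely a list-colouring of $H:=G-B$ with $L(v)=\seq{k}\setminus\{\phi(u):u\in N_G(v)\cap B\}$; and since $d_G(v)\le k$ for every $v\notin B$, we get $\Delta(H)\le k$ and $|L(v)|\ge k-|N_G(v)\cap B|\ge d_G(v)-|N_G(v)\cap B|=d_H(v)$ for all $v\in V(H)$. Thus the algorithm is: compute $B$; loop over the at most $\min\{k^p,p^p\}$ candidate functions $\phi$; for each that is proper on $G[B]$, build the lists $L$ (stored implicitly, as the complements within $\seq{k}$ of sets whose total size is $O(n+m)$) and call the subroutine on $(H,L)$; on the first success, output $\phi$ together with the returned $L$-colouring, and otherwise report that $G$ is not $k$-colourable.

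For the subroutine I need the following, applied separately to each connected component of $H$: given a connected graph $H$ with $|L(v)|\ge d_H(v)$ for all $v$, in time $O(|V(H)|+|E(H)|)$ either output an $L$-colouring or certify that none exists. If some vertex $v$ has $|L(v)|>d_H(v)$, then an $L$-colouring always exists and can be found greedily: take a BFS ordering rooted at $v$ and colour the vertices in reverse order, so that each vertex other than $v$ is coloured while its BFS-parent is still uncoloured, and $v$ is coloured last; at each step the number of already-coloured neighbours is strictly smaller than the list size, so a colour is free. Otherwise $|L(v)|=d_H(v)$ for every $v$, and here I would invoke the classical Erd\H{o}s--Rubin--Taylor theorem on degree-choosability: if $H$ is not a Gallai tree — that is, if some block of $H$ is neither a complete graph nor an odd cycle — then $H$ is degree-choosable, hence $L$-colourable, and an $L$-colouring can be extracted from its (constructive) proof; if $H$ is a Gallai tree, then $L$-colourability must be tested directly, which I would do by a bottom-up pass over the block--cut tree of $H$, maintaining at each cut-vertex the set of colours that it can receive consistently with an $L$-colouring of everything processed below it — a systems-of-distinct-representatives update for a complete block, and a short propagation around the cycle for an odd-cycle block, each linear in the size of the block.

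Combining the branching with the subroutine yields total running time $\min(p,k)^p\cdot O(n+m)=\min\{k^p,p^p\}\cdot O(n+m)$, and correctness follows from the two reductions above: the palette-permutation argument shows no $k$-colouring is missed, and for a fixed $\phi$ the union of $\phi$ with any $L$-colouring of $H$ is exactly a proper $k$-colouring of $G$ extending $\phi$. The step I expect to be the main obstacle is the tight case of the subroutine: turning the Erd\H{o}s--Rubin--Taylor structural dichotomy into an honest linear-time algorithm that returns a concrete $L$-colouring or a concrete certificate of infeasibility — in particular the block--cut-tree bookkeeping in the Gallai-tree case, carried out so that no full colour list is ever materialised. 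Everything else — the branching and the slack case of the subroutine — is routine once that lemma is available.
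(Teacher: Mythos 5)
Your proposal is correct and follows essentially the same route as the paper: branch over the at most $\min\{k,p\}^p$ proper colourings of the high-degree set, reduce each branch to a degree-list-colouring of $G-B$, and invoke the Borodin/Erd\H{o}s--Rubin--Taylor dichotomy, colouring non-Gallai-tree components by the constructive theorem and handling Gallai trees by a dedicated linear-time routine. The one step you flag as open --- that Gallai-tree subroutine --- is resolved in the paper not by your block--cut-tree DP with SDR updates (whose linear-time clique-block update you do not justify), but by repeatedly stripping leaf blocks whose lists are uniform (simply deleting that common list from the cut-vertex's list) and, the moment a leaf block is non-uniform, manufacturing a vertex with a long list and finishing with the greedy BFS colouring, which avoids any matching computation altogether.
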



This paper is structured as follows.
In the next section, we give preliminary definitions.
In \cref{secmlec}, we consider graphs with maximal local edge-connectivity~$k$, and prove \cref{main,thm:brooks3gen}.
We then consider the more general class of graphs with maximal local connectivity $k$, in \cref{secmlc3}, and prove \cref{colour_nl4c,colour_n4c_2}.
We present
the proof of \cref{m3c_npc} in \cref{secnpc2}.
Finally, in \cref{secbigvs}, we consider the problem of $k$-colouring a graph parameterized by the number of vertices of degree at least $k+1$, and prove \cref{fewbigvs}.

\section{Preliminaries}

Our terminology and notation follows \cite{bondyAndMurty} unless otherwise specified.
Throughout, we assume all graphs are simple.  
%
We say that paths are \emph{internally disjoint} if they have no internal vertices in common.
A \emph{$k$-edge cut} is a $k$-element set $S \subseteq E(G)$ for
which $G \ba S$ is disconnected.  A \emph{$k$-vertex cut} is a
$k$-element subset $Z \subseteq V(G)$ for which $G - Z$ is
disconnected.
We call the vertex of a $1$-vertex cut a \emph{cut-vertex}.
For distinct non-adjacent vertices $x$ and $y$, and
$Z \subseteq V(G) \setminus \{x,y\}$, we say that \emph{$Z$ separates
  $x$ and $y$} when $x$ and $y$ belong to different components of
$G - Z$.  More generally, for disjoint, non-empty $X,Y,Z \subseteq V(G)$, we say
that \emph{$Z$ separates $X$ and $Y$} if, for each $x \in X$ and
$y \in Y$, the vertices $x$ and $y$ are in different components of
$G - Z$.  We call a partition $(X, Z, Y)$ of $V(G)$ a {\it $k$-separation}
if $|Z| \leq k$ and $Z$ separates $X$ from $Y$.
When $G$ is $k$-connected and $(X, Z, Y)$ is a $k$-separation of $G$, we have that $|Z| = k$.
By Menger's theorem, if $\kappa(x,y)=k$ for non-adjacent vertices $x$ and $y$, then there is a $k$-vertex cut that separates $x$ and $y$.
If $\kappa(x,y)=k\geq 2$ for adjacent vertices $x$ and $y$, then there is a $(k-1)$-vertex cut in $G \ba xy$ that separates $x$ and $y$.
We use these
freely 
in the proof of \cref{structurelemma1}.

We view a proper $k$-colouring of a graph $G$ as a function
$\phi: V(G) \rightarrow \{1,2,\dotsc,k\}$ where for every $uv \in E(G)$ we
have $\phi(u) \neq \phi(v)$.
For $X \subseteq V(G)$, we write $\phi(X)$ to denote the image of $X$ under $\phi$.

Given graphs $G_1$ and $G_2$, the graph with vertex set $V(G_1) \cup V(G_2)$ and edge set $E(G_1) \cup E(G_2)$ is denoted $G_1 \cup G_2$.

A \emph{diamond} is a graph obtained by removing an edge from $K_4$.
We call the two degree-$2$ vertices of a diamond $D$ the \emph{pick} vertices of $D$.


\section{Graphs with maximal local edge-connectivity~$k$}
\label{secmlec}

In this section we prove \cref{main,thm:brooks3gen}.

Lov\'asz provided a short proof of Brooks' theorem in \cite{Lovasz1975}.
The proof can easily be adapted to show that graphs with at most one vertex of degree more than $k$ are often $k$-colourable.  We make this precise in the next lemma; the proof is provided for completeness.  A vertex is \emph{dominating} if it is adjacent to every other vertex of the graph.

\begin{lemma}
  \label{3c_nd}
  Let $G$ be a $3$-connected graph with at most one vertex of degree more than $k$, for $k \geq 3$, and no dominating vertices.  Then $G$ is $k$-colourable.
\end{lemma}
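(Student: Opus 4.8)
The plan is to adapt Lov\'asz's proof of Brooks' theorem to the setting where exactly one vertex, say $w$, may have degree more than $k$. The key idea of Lov\'asz's proof is to find a vertex ordering $v_1, v_2, \dotsc, v_n$ such that every vertex $v_i$ with $i < n$ has a neighbour $v_j$ with $j > i$, and then to greedily colour in reverse order; this works provided $v_n$ has at most $k-1$ neighbours among $v_1, \dotsc, v_{n-1}$ that we can control, or more precisely provided we can handle $v_n$ at the end. In Brooks' proof, one either finds a non-adjacent pair of neighbours of some vertex to exploit, or the graph is complete or an odd cycle; the $3$-connectivity hypothesis here should make the ``bad'' cases easier to rule out.

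\textbf{Key steps.} First I would deal with the vertices of degree at most $k$: since $G$ has at most one vertex $w$ of degree more than $k$, every other vertex has degree $\le k$. If $G$ has no vertex of degree more than $k$ at all, then $G \in \mathcal{C}_0^k$ has maximum degree $k$, and since $G$ is $3$-connected it is neither an odd cycle nor (having no dominating vertex, so in particular $G \ne K_{k+1}$) a complete graph, so Brooks' theorem applies directly. So assume $w$ is the unique vertex with $\deg(w) > k$. The plan is then: (i) if $w$ has two non-adjacent neighbours $a, b$ with $G - \{a,b\}$ connected, colour $a$ and $b$ with colour $1$, and order the remaining vertices $v_1 = a' , \dotsc$ ending at $w$ so that $G - \{a,b\}$ stays connected and every vertex before $w$ has a later neighbour — then greedily colour from the end: each $v_i$ ($i<n$, $v_i \ne a,b$) has at most $k-1$ already-coloured neighbours, and when we reach $w$, it has two neighbours ($a$ and $b$) sharing colour $1$, so at most $k-1$ colours are forbidden. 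The existence of such $a,b$ with $G-\{a,b\}$ connected is where I would use $3$-connectivity: in a $3$-connected graph, for any vertex $w$ and any two of its neighbours, $G - \{a,b\}$ is connected, so I only need two \emph{non-adjacent} neighbours of $w$. If the neighbourhood of $w$ is a clique, then since $\deg(w) \ge k+1 \ge 4$, the neighbours of $w$ together with $w$ form a clique of size $\ge k+2$; I would then argue this forces a structure incompatible with the hypotheses — in particular, with no dominating vertex and $3$-connectivity, I expect to derive a contradiction or reduce to a smaller/simpler case. Actually, more carefully: if $N(w)$ is a clique, pick any neighbour $x$ of $w$; since $x$ is not dominating there is a vertex $y \notin N(x) \cup \{x\}$, and one can route around to set up an ordering where the final vertex is not $w$ — but this is getting intricate, so the cleanest route is probably: if $N(w)$ is a clique then $G$ contains $K_{k+2}$ as a subgraph; a $3$-connected graph is either equal to this $K_{k+2}$ (excluded, as then $K_{k+2}$ has a dominating vertex / is complete) or has a vertex outside, and connectivity/degree constraints on the boundary of the clique let us find the needed non-adjacent pair elsewhere.

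\textbf{Main obstacle.} The main difficulty I anticipate is the case where $N(w)$ induces a clique (so Lov\'asz's trick of identifying two non-adjacent neighbours of $w$ fails at $w$ itself), and more generally ensuring that throughout the greedy colouring the \emph{unique} high-degree vertex $w$ is handled last and only ever sees $\le k-1$ distinct colours among its already-coloured neighbours. Handling this will require a careful choice of the root of a suitable spanning structure (e.g. a DFS/BFS tree or a non-separating ear-type decomposition) rooted at $w$, together with a local argument — using $3$-connectivity and the absence of a dominating vertex — to find somewhere in the graph a pair of non-adjacent vertices at distance $2$ whose deletion keeps $G$ connected, which we colour alike to ``save'' a colour. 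Once the right pair is fixed and the right ordering is chosen, the greedy colouring itself is a routine verification; essentially all the work is in setting up that ordering and disposing of the clique-neighbourhood exceptional case.
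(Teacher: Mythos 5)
Your plan reverses the roles of the two special vertices, and this is where it breaks. In Lov\'asz's argument, the vertex that is coloured \emph{last} must have degree at most $k$: the whole point of pre-colouring two of its non-adjacent neighbours alike is that then at most $\deg - 1 \le k-1$ colours are forbidden at it. You instead end the ordering at the unique high-degree vertex $w$ and claim that, because $a,b\in N(w)$ share colour $1$, ``at most $k-1$ colours are forbidden'' at $w$. That is false: $w$ has at least $k+1$ neighbours, so even with $a$ and $b$ identified in colour, its remaining $\ge k-1$ neighbours can carry colours $2,\dotsc,k$, and all $k$ colours can be forbidden at $w$. One repeated pair only buys you a bound of $\deg(w)-1 \ge k$ forbidden colours, which is not enough; you would need the neighbours of $w$ to span at most $k-1$ colours, and nothing in your construction forces that. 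Your ``main obstacle'' paragraph senses this difficulty but does not resolve it, and it cannot be resolved in that orientation; consequently your excursion into the case where $N(w)$ is a clique is also aimed at the wrong problem.

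The fix is to make the high-degree vertex one of the two pre-coloured vertices rather than the terminal vertex, which is exactly what the paper does. Take $h$ of maximum degree; since $h$ is not dominating and $G$ is connected, there is a vertex $y$ at distance two from $h$, with a common neighbour $z_1$. Colour $h$ and $y$ alike (they are non-adjacent), use $3$-connectivity to get that $G-\{h,y\}$ is connected, take a search ordering of $G-\{h,y\}$ starting at $z_1$, and colour greedily so that $z_1$ comes last. Every vertex other than $h$ has degree at most $k$, so each intermediate vertex has an uncoloured later neighbour and at most $k-1$ forbidden colours, and the final vertex $z_1$ has degree at most $k$ with two neighbours ($h$ and $y$) sharing a colour. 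Note that in this setup the degree of $h$ is irrelevant (it is coloured first, unconstrained), you never need two non-adjacent neighbours of the high-degree vertex, and the ``no dominating vertex'' hypothesis is used precisely to produce the distance-two vertex $y$.
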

\begin{proof}
  Let $h$ be a vertex of $G$ with maximum degree.  Since $G$ has no dominating vertices and is connected, there is a vertex $y$ at distance two from $h$.  Let $z_1$ be a common neighbour of $h$ and $y$.
  Since $G$ is $3$-connected, $G-\{h,y\}$ is connected.  Let $z_1,z_2,\dotsc,z_{n-2}$ be a search ordering of $G-\{h,y\}$ starting at $z_1$; that is, an ordering of $V(G-\{h,y\})$ where each vertex $z_i$, for $2 \leq i \leq n-2$, has a neighbour $z_j$ with $j < i$.
  We colour $G$ as follows.  Assign $h$ and $y$ the colour $1$, say.  We can then (greedily) assign one of the $k$ colours to each of $z_{n-2}, z_{n-3}, \dotsc, z_2$ in turn, since at the time one of these vertices is considered, it has at most $k-1$ neighbours that have already been assigned colours.
  Finally, we can colour $z_1$, since it has degree at most $k$, but at least two of its neighbours, $h$ and $y$, are the same colour.
\end{proof}

Now we show that we can decompose a $k$-connected graph with maximal local edge-connectivity~$k$ into components each containing a single vertex of degree more than $k$.

\begin{lemma}
  \label{structurelemma2}
  Let $G$ be a $k$-connected graph with maximal local edge-connectivity~$k$, for $k \geq 3$, and at least two 
  vertices of degree more than $k$.
  Then there exists a $k$-edge cut $S$ such that one component of $G \ba S$
  contains precisely one vertex of degree more than $k$,
  and the edges of $S$ are vertex disjoint.
\end{lemma}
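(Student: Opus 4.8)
The plan is to exploit the fact that a graph $G$ with maximal local edge-connectivity $k$ which is $k$-connected is, by definition, "tight" in terms of edge-connectivity: every pair of vertices has exactly $k$ edge-disjoint paths between them, and hence $\lambda(x,y) = k$ for all distinct $x,y$. Pick two vertices $u, v$ of degree more than $k$. Since $\lambda(u,v) = k$, there is a $k$-edge cut $S$ separating $u$ and $v$; choose such an $S$ with the side $A$ containing $u$ as small as possible (where $G \ba S$ has components whose union on the $u$-side is $A$, i.e. $(A, B)$ is the partition of $V(G)$ induced by $S$ with $u \in A$, $v \in B$). First I would argue that $S$ can be taken to be a "bond" — that the subgraph $G[A]$ is connected — by the standard submodularity/uncrossing argument for minimum edge cuts, so that $G \ba S$ has exactly two components. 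Then I would show, again by minimality of $|A|$ together with the edge-connectivity bound $\lambda \le k$, that $A$ contains exactly one vertex of degree more than $k$, namely $u$: if $A$ contained another such vertex $w$, then since $\lambda(u,w) = k$ inside the whole graph and $G[A]$ is connected, one could find a $k$-edge cut strictly inside $A$ separating $w$ from $u$ (or rather, from $B$), contradicting minimality — here the key point is that every vertex of $A \setminus \{u\}$ other than possibly one has degree exactly $k$, and a vertex of degree $k$ on the boundary of a minimum cut forces structure.

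The cleanest route to the "$A$ has exactly one big vertex" claim is probably a counting/cut-exchange argument: suppose $w \in A$, $w \ne u$, has $\deg(w) > k$. Consider the set $S'$ of edges between $A \setminus \{w\}$ and $B \cup \{w\}$; this is an edge cut separating $u$ from $v$ if $w \ne u$ and $A \setminus \{w\}$ still contains $u$ and is nonempty. Counting edges, $|S'| = |S| - d_A(w) + d_B(w)$ where $d_A(w), d_B(w)$ count $w$'s neighbours inside $A\setminus\{w\}$ and inside $B$. Since $\lambda(u,v) \le k = |S|$, every edge cut separating them has size $\ge k$, so $|S'| \ge k$ gives $d_B(w) \ge d_A(w)$; symmetrically, moving $w$ to the other side is not available, but iterating this idea (peeling off vertices of $A$ toward $u$) together with minimality of $|A|$ should pin down that $A$ is, apart from $u$, "locally $k$-regular" in a way that forbids a second big vertex. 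This is the step I expect to be the main obstacle: making the uncrossing/peeling argument rigorous so that it simultaneously delivers (i) exactly two components, (ii) exactly one big vertex in the $u$-component, without accidentally pushing $u$ itself out or making the side empty.

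Finally, for the "vertex disjoint" conclusion: I would argue that if two edges of $S$ share an endpoint $x$, then $x$ lies on, say, the $A$-side, and $x$ together with the other endpoints of $S$-edges incident to it forms a small vertex cut — more precisely, contracting each side of the cut to a single vertex and noting $|S| = k$, two edges of $S$ meeting at $x \in A$ would, after removing $x$, leave at most $k-2$ edges plus the structure needed to separate, ultimately contradicting $k$-connectivity of $G$ (a $k$-connected graph on enough vertices cannot have such a small vertex cut on the cut boundary). Concretely: let $x \in A$ be incident to $t \ge 2$ edges of $S$; then the $t$ neighbours of $x$ in $B$ lie in $B$, and $B \cup \{x\} \setminus \{x\}= B$... the right phrasing is that removing from $G$ the $\le k$ endpoints-in-$B$ of $S$ plus adjusting for $x$ disconnects things. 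Since $G$ is $k$-connected and (after the earlier steps) both sides $A$ and $B$ are nonempty and $G\ba S$ is disconnected, the set of endpoints of $S$ on one side is a vertex cut; its size is at most $k$, with equality forced, and equality fails precisely when some vertex is hit twice — so $S$ must be a matching. I would want to double-check the degenerate cases where one side is a single vertex (impossible here since both $u$ and its partner, and the second big vertex, force each side to be large enough), and the case $k=3$ separately if the general argument is delicate there.
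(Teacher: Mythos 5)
The core step of your plan---that a suitably minimal cut side contains exactly one vertex of degree more than $k$---is exactly where the work lies, and your sketch does not close it. With your anchored minimization (minimize the side $A \ni u$ over $k$-edge cuts separating $u$ from $v$), the natural uncrossing fails in a real case: if $A$ contains a second big vertex $w$, take a $k$-edge cut with sides $(A',B')$, $u \in A'$, $w \in B'$; you want $d(A \cap A') = k$ to contradict minimality, but submodularity only yields this when $d(A \cup A') \ge k$, which needs $A \cup A' \neq V(G)$ --- and if $v \in A'$ this can fail, so no contradiction is obtained. Your fallback, the single-vertex peeling bound $d_B(w) \ge d_A(w)$ obtained from $|S'| \ge k$, is genuinely too weak: it constrains where $w$'s edges go but says nothing that prevents a second vertex of degree more than $k$ from lying in the minimal side. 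The paper resolves precisely this by not anchoring at a vertex pair: call $X$ \emph{good} if $d(X)=k$ and $|X| \le n/2$; two intersecting good sets have good intersection (the size bound makes the union a proper subset, so $d(X_1\cup X_2)\ge k$, and submodularity gives $d(X_1\cap X_2)=k$), and one takes a minimal good set containing at least one big vertex --- if it contained two, the smaller side of a $k$-edge cut separating them is a good set containing exactly one, and its intersection with $X$ is a strictly smaller good set containing a big vertex, a contradiction. Some device of this kind (the $n/2$ bound, or an equivalent) is needed; your write-up acknowledges the obstacle but does not supply it.

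Your route to vertex-disjointness (if two edges of $S$ met in a vertex, the endpoints of $S$ on one side would form a vertex cut of size at most $k-1$, contradicting $k$-connectivity) is a legitimate alternative to the paper's argument via $k$ internally disjoint paths, but it hinges on a fact you wave away as a degenerate case: you need a vertex of $A$ not incident with $S$ (and likewise for $B$), since otherwise the endpoint set $A_S$ need not separate anything. ``Both sides are large enough'' does not follow from merely containing big vertices; it follows from a degree count, which is what the paper does: every vertex of $A$ has degree at least $k$ and one has degree more than $k$, so $\sum_{v\in A} d(v) \ge |A|k+1$, while if $|A| \le k$ this sum is at most $|A|(|A|-1)+k \le |A|k$, a contradiction; hence $|A| > k \ge |A_S|$ and $A \setminus A_S \neq \emptyset$. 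With that count supplied, your matching argument goes through, but as written both this step and the main step above are genuine gaps.
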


\begin{proof}
  We say that a set of vertices $X_1 \subseteq V(G)$ is \emph{good} if $|X_1| \le n/2$ and $d(X_1)=k$, where $d(X_1)$ is the number of edges with one end in $X_1$ and the other end in $V(G) \setminus X_1$.
  If two good sets~$X_1$ and~$X_2$ have non-empty intersection, then $|X_1 \cup X_2| < n$, so $d(X_1 \cup X_2) \geq k$ by $k$-connectivity.
  As $d(X_1) + d(X_2) \ge d(X_1 \cup X_2) + d(X_1 \cap X_2)$ (see, for example, \cite[Exercise~2.5.4(b)]{bondyAndMurty}), it follows that $d(X_1 \cap X_2) = k$.
  Thus, if a good set~$X_1$ meets a good set~$X_2$, then $X_1 \cap X_2$ is also good.
  This implies that if a vertex of degree more than $k$ is in a good set, then there is unique minimal good set containing it.
  Since there is a $k$-edge cut between any two vertices,
  one of any two vertices is in a good set. Thus,
  all but at most one vertex of $G$ is in a good set.
  Let $X$ be a minimal good set containing at least one vertex of degree more than $k$.
  Suppose $X$ contains distinct vertices $x$ and $y$, each with degree more than $k$.
  Then there is $k$-edge cut separating them, so there is a good set containing exactly one of them.  By taking the intersection of this good set with $X$, we obtain a good set that is a proper subset of $X$ and contains at least one vertex of degree more than $k$; a contradiction.
  So $X$ contains precisely one vertex of degree more than $k$.
  Now $d(X)=k$, since $X$ is good, hence the $k$ edges with one end in $X$ and the other in $E(G)-X$ give an edge cut $S$.

  It remains to show that the edges of $S$ are vertex disjoint.
  Set $Y=V(G)\setminus X$, and let $X_S$ (respectively, $Y_S$) be the set of vertices of $X$ (respectively, $Y$) incident to an edge of $S$.
  Let $|X|=q$.
  Since every vertex in $X$ has degree at least $k$, and $X$ contains some vertex of degree more than $k$, we have that $\Sigma_{v\in X} d(v) \geq qk + 1$.
  If $q \leq k$, then, since each vertex in $X$ has at most $q-1$ neighbours in $X$, we have that $\Sigma_{v\in X} d(v) \leq q(q-1) + k \leq k(q-1)+k = qk$; a contradiction.
  So $X_S\neq X$ and, similarly, $Y_S\neq Y$.
  %
Now, since $G$ is $k$-connected, there are $k$ internally disjoint paths from any vertex in $X \setminus X_S$ to any vertex in $Y\setminus Y_S$. 
Each of these paths must contain a different edge of $S$.
Thus $S$ satisfies the requirements of the lemma.
%
\end{proof}

Next we show, loosely speaking, that if a graph $G$ has a 
$k$-edge cut $S$ where the edges in $S$ have no vertices in common, then
the problem of
 $k$-colouring $G$ can essentially be reduced to finding $k$-colourings of the components of $G \ba S$; 
 the only bad case
 is when the vertices incident to $S$ are coloured all the same colour in one component, and all different colours in the other.

\begin{lemma} \label{lnew}
  %
  Let $G$ be a connected graph with a $k$-edge cut $S$, for $k \geq 3$, such that the edges of $S$ are vertex-disjoint, and $G \ba S$ consists of two components $G_1$ and $G_2$.  
  Let $V_i$ be the set of vertices in $V(G_i)$ incident to an edge of $S$, for $i \in \{1,2\}$.
  \begin{enumerate}[label=\rm (\roman*)]
    \item Then $G$ is $k$-colourable if and only if there exists a $k$-colouring $\phi_1$ of $G_1$ and a $k$-colouring $\phi_2$ of $G_2$ such that $\{|\phi_1(V_1)|,|\phi_2(V_2)|\} \neq \{1,k\}$.\label{part1}
    \item Moreover, if 
  $\phi_1$ and $\phi_2$ are $k$-colourings of $G_1$ and $G_2$, respectively, for which
  $\{|\phi_1(V_1)|,|\phi_2(V_2)|\} \neq \{1,k\}$, then there exists a permutation $\sigma$ such that
\begin{equation*}
  \phi(x) = 
  \begin{cases}
    \phi_1(x) & \textrm{for } x \in V(G_1), \\
    \sigma(\phi_2(x)) & \textrm{for } x \in V(G_2)
  \end{cases}
\end{equation*}
is a $k$-colouring of $G$.\label{part2}
  \end{enumerate}
\end{lemma}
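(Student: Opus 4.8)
The plan is to deduce part (i) from part (ii), and to prove part (ii) by a bipartite-matching argument. First, the setup. Since the $k$ edges of $S$ are pairwise vertex-disjoint and each joins a vertex of $G_1$ to a vertex of $G_2$, we have $|V_1| = |V_2| = k$, and $S$ induces a perfect matching between $V_1$ and $V_2$; for $v \in V_1$ let $v^\star \in V_2$ denote the other end of the edge of $S$ at $v$. The crucial observation is that, for any $k$-colourings $\phi_1$ of $G_1$ and $\phi_2$ of $G_2$ and any permutation $\sigma$ of $\{1,\dots,k\}$, the map $\phi$ equal to $\phi_1$ on $V(G_1)$ and to $\sigma \circ \phi_2$ on $V(G_2)$ is a proper colouring of $G$ if and only if $\phi_1(v) \neq \sigma(\phi_2(v^\star))$ for every $v \in V_1$: indeed $\phi_1$ and $\sigma\circ\phi_2$ are proper on $G_1$ and $G_2$ respectively, and $S$ is exactly the set of edges of $G$ not lying in $G_1$ or $G_2$. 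Given this, the "if" direction of part (i) is immediate from part (ii), and for the "only if" direction one restricts a $k$-colouring $\phi$ of $G$ to $\phi_i := \phi|_{V(G_i)}$ and notes that if, say, $|\phi_1(V_1)| = 1$ and $|\phi_2(V_2)| = k$ (the other case being symmetric), then the single colour on $V_1$ also occurs on some $v^\star \in V_2$, so the edge $v v^\star \in S$ is monochromatic under $\phi$ — a contradiction.

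So it remains to prove part (ii): given $\phi_1,\phi_2$ with $\{|\phi_1(V_1)|,|\phi_2(V_2)|\} \neq \{1,k\}$, produce a permutation $\sigma$ with $\phi_1(v) \neq \sigma(\phi_2(v^\star))$ for all $v \in V_1$. I would encode the constraint in a bipartite graph $H$ on two copies of $\{1,\dots,k\}$, joining $\ell$ in the first copy to $c$ in the second precisely when $\phi_1(v) = \ell$ and $\phi_2(v^\star) = c$ for some $v \in V_1$; thus $H$ has at most $k$ edges, one per vertex of $V_1$. A permutation $\sigma$ as required is exactly a perfect matching of the bipartite complement $\overline{H}$ (reading off $\sigma$ as the bijection sending each colour of the second copy to its partner in the first). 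By Hall's theorem applied to the second copy, if $\overline{H}$ had no perfect matching there would be a nonempty set $T$ in the second copy with $|N_{\overline{H}}(T)| < |T|$; writing $B(T)$ for the set of colours in the first copy that are $H$-adjacent to \emph{every} colour of $T$, we have $|N_{\overline{H}}(T)| = k - |B(T)|$, so this says $|B(T)| + |T| \geq k+1$ (in particular $B(T)\neq\emptyset$). On the other hand, each vertex of $B(T)$ has $H$-degree at least $|T|$, and $H$ has at most $k$ edges, so $|B(T)|\cdot|T| \leq k$.

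The last step, which I expect to be the crux, is to show these two inequalities force the excluded configuration. From $|B(T)|\cdot|T| \le k < k+1 \le |B(T)| + |T|$ one checks that $\min\{|T|,|B(T)|\} = 1$ (if both were at least $2$ then $|B(T)|\cdot|T| \ge |B(T)| + |T|$, a contradiction). If $|B(T)| = 1$, then $|T| \geq k$ and hence $T$ is the whole second copy, so the unique element of $B(T)$ is $H$-adjacent to all $k$ colours of the second copy; since $H$ has at most $k$ edges, this means every $v \in V_1$ has the same value $\phi_1(v)$ while $\{\phi_2(v^\star) : v \in V_1\}$ is all of $\{1,\dots,k\}$, i.e. $|\phi_1(V_1)| = 1$ and $|\phi_2(V_2)| = k$. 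The case $|T| = 1$ is symmetric and yields $|\phi_1(V_1)| = k$ and $|\phi_2(V_2)| = 1$. Either way $\{|\phi_1(V_1)|,|\phi_2(V_2)|\} = \{1,k\}$, contradicting the hypothesis. Hence $\overline{H}$ has a perfect matching, the permutation $\sigma$ exists, and part (ii) — and with it part (i) — follows.
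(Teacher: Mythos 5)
Your proof is correct, but the key step is argued by a genuinely different route from the paper. For part \ref{part2}, the paper builds an auxiliary graph whose vertices are the colour classes of $\phi_1$ on $V_1$ and of $\phi_2$ on $V_2$ (a clique on each side, plus at most $k$ cross edges coming from $S$), bounds its maximum degree by $k$ using the vertex-disjointness of $S$, and then invokes Brooks' theorem (this is where $k\ge 3$ enters), ruling out the complete-graph exception precisely via the hypothesis $\{|\phi_1(V_1)|,|\phi_2(V_2)|\}\neq\{1,k\}$; a $k$-colouring of that auxiliary graph then yields the permutation $\sigma$. You instead work with the bipartite ``conflict'' graph $H$ on two copies of the colour set, observe that $\sigma$ exists exactly when the bipartite complement $\overline{H}$ has a perfect matching, and rule out a Hall violator $T$ by the two counts $|B(T)|\cdot|T|\le |E(H)|\le k$ and $|B(T)|+|T|\ge k+1$, which force $\min\{|T|,|B(T)|\}=1$ and hence exactly the excluded configuration $\{|\phi_1(V_1)|,|\phi_2(V_2)|\}=\{1,k\}$; I checked the case analysis (all $k$ edges of $H$ incident to a single colour on one side, the other side exhausting all colours) and it is sound, as is your treatment of part \ref{part1}, where you make explicit the monochromatic $S$-edge argument that the paper only states in passing. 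Your Hall/SDR argument is more elementary and self-contained (it does not use Brooks' theorem and in fact never needs $k\ge 3$), while the paper's version is shorter given that Brooks' theorem is already a running tool in that section; both proofs, like the paper's, implicitly use that every edge of $S$ joins $V_1$ to $V_2$, which is how the cut $S$ arises in all applications of the lemma.
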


\begin{proof}
  First, we prove \ref{part2}, which implies that \ref{part1} holds in one direction.
  Let $\phi_1$ and $\phi_2$
be $k$-colourings 
  of $G_1$ and $G_2$, respectively, 
  for which $\{|\phi_1(V_1)|,|\phi_2(V_2)|\} \neq \{1,k\}$.
  We will construct an auxiliary graph $H$ where the vertices are labelled by subsets of $V_1$ or $V_2$ in such a way that if we can $k$-colour $H$, then there exists a permutation $\sigma$ such that 
 $\phi$, as defined in the statement of the lemma, is a $k$-colouring of $G$.
%

  Let $(T_1,T_2,\dotsc,T_{|\phi_1(V_1)|})$ be the partition of the vertices in $V_1$ into colour classes with respect to $\phi_1$ and,
  likewise, let $(W_1,W_2,\dotsc,W_{|\phi_2(V_2)|})$ be the partition of $V_2$ into colour classes with respect to $\phi_2$.
  We construct a graph $H$ consisting of $|\phi_1(V_1)| + |\phi_2(V_2)|$ vertices:
  for each $i \in \{1,2,\dotsc,|\phi_1(V_1)|\}$, we have a vertex $t_i \in V(H)$ labelled by $T_i$, and,
  for each $i \in \{1,2,\dotsc,|\phi_2(V_2)|\}$, we have a vertex $w_i \in V(H)$ labelled by $W_i$.
  Let $T = \{t_i : 1 \leq i \leq |\phi_1(V_1)|\}$ and let $W=\{w_i : 1 \leq i \leq |\phi_2(V_2)|\}$.
  Each $t \in T$ (respectively, $w \in W$) is adjacent to every vertex in $T-\{t\}$ (respectively, $W-\{w\})$.
  Finally, for each edge $v_1v_2$ in $S$, we add an edge between the vertex $t \in T$ labelled by the colour class containing $v_1$, and the vertex $w \in W$ labelled by the colour class containing $v_2$, omitting parallel edges.
  Thus there are at most $k$ edges between vertices in $T$ and vertices in $W$.

  Now we show that $H$ is $k$-colourable.
Consider a vertex $t \in T$.  If it has $x$ neighbours in $W$, then it represents a colour class consisting of at least $x$ vertices of $V_1$.  So there are at most $k-x$ vertices in $T-\{t\}$, and hence $t$ has degree at most $x + (k-x)$.
  It follows, by Brooks' theorem, that $H$ is $k$-colourable unless it is a complete graph, 
  as $k \ge 3$.
  Moreover, 
  if $|V(H)| \leq k$, then $H$ is $k$-colourable, so assume that $|V(H)| > k$.
Then, without loss of generality, we may assume that $|T| > k/2$.  Since there are at most $k$ edges between vertices in $T$ and vertices in $W$, and each vertex of $T$ has the same number of neighbours in $W$, it follows that each vertex in $T$ has a single neighbour in $W$.  Since $H$ is a complete graph, we have $|W|=1$, and hence, recalling that $|V(H)| > k$, we have $|T|=k$.  That is, $|\phi_1(V_1)|=k$ and $|\phi_2(V_2)|=1$; a contradiction.

  Now $H$ is $k$-colourable.
  By permuting the colours of a $k$-colouring of $H$, we can obtain a $k$-colouring $\psi$ such that $\psi|_{V_1} = \phi_1$.  Then $\psi|_{V_2}$ induces a permutation $\sigma$ of $\phi_2$, in the obvious way, with the desired properties.
  This completes the proof of \ref{part2}.

  Finally, we observe that when $\{|\phi_1(V_1)|,|\phi_2(V_2)|\} = \{1,k\}$ for every $k$-colouring $\phi_1$ of $G_1$ and  every $k$-colouring $\phi_2$ of $G_2$, then $G$ is not $k$-colourable.  This completes the proof of \ref{part1}.
\end{proof}

Suppose that a graph $G$ has a $k$-edge cut $S$ that separates $X$ from $Y$, where $(X,Y)$ is a partition of $V(G)$.  We fix the following notation for the remainder of this section. Let $Y_S$ (respectively, $X_S$) be the subset of $Y$ (respectively, $X$) consisting of vertices incident to an edge in $S$.
  Let $G_{X}$ (respectively, $G_Y$) be the graph obtained from $G[X \cup Y_S]$ (respectively, $G[Y \cup X_S]$) by adding edges so that $Y_S$ (respectively, $X_S$) is a clique.

\begin{lemma}
  \label{close}
  Let $G$ be a $k$-connected graph, for $k \geq 3$, with maximal local edge-connectivity~$k$, and a $k$-edge cut $S$ that separates $X$ from $Y$, where $(X,Y)$ partitions $V(G)$. 
  Then $G_X$ is $k$-connected and has maximal local edge-connectivity~$k$.
\end{lemma}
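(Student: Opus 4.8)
The plan is to pin down the structure of the cut $S$ as tightly as possible and then verify the two conclusions separately; almost all of the difficulty sits in the first step.

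\textbf{Structural reduction.} First I would record the easy facts. Since $S$ is a $k$-edge cut separating $X$ from $Y$, it is exactly the set of edges with one end in $X$ and the other in $Y$; hence $|S|=k$ and $d(X)=d(Y)=k$. As $G$ is $k$-connected it is $k$-edge-connected, so $d(W)\ge k$ for every proper non-empty $W\subseteq V(G)$; and for distinct $u,v$ we have $k\le\kappa(u,v)\le\lambda(u,v)\le k$, so the minimum $u$--$v$ edge cut in $G$ has exactly $k$ edges. A one-line argument shows $G[X]$ and $G[Y]$ are connected (if $X$ split into two parts with no edge between them, one part would be joined to the rest of $G$ by fewer than $k$ edges of $S$, contradicting $k$-edge-connectivity). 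For $w\in Y_S$ write $s_w$ for the number of edges of $S$ incident with $w$, so $\sum_{w\in Y_S}s_w=k$. The crucial claim is a \emph{dichotomy}: either $Y=Y_S$ and then $G_X=G$, or $Y\neq Y_S$ and then $|Y_S|=k$ with $s_w=1$ for every $w\in Y_S$. For the first alternative one uses the degree count from the proof of \cref{structurelemma2}: if $Y=Y_S$ then $|Y|\le k$, and the bound $k|Y|\le\sum_{v\in Y}\deg v=2e(G[Y])+k\le |Y|(|Y|-1)+k$ forces $|Y|=1$ or $G[Y]=K_{|Y|}$, so completing $Y_S$ to a clique adds nothing. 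For the second, pick $y\in Y\setminus Y_S$ and $x\in X$, take $k$ internally disjoint $x$--$y$ paths in $G$, and observe that the first vertex of $Y$ met by each such path lies in $Y_S$ (the edge entering it comes from $X$, hence is in $S$) and is an internal vertex of that path; these $k$ vertices are distinct, so $|Y_S|\ge k$, whence $|Y_S|=k$ and each $s_w=1$. Since the first alternative is trivial, from here on I assume $|Y_S|=k$, so in $G_X$ the set $Y_S$ induces a $K_k$ and $S$ meets $Y_S$ in a matching.

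\textbf{$G_X$ is $k$-connected.} Fix distinct $u,v\in V(G_X)=X\cup Y_S$; note $|V(G_X)|\ge 2$. If $u,v\in Y_S$, the clique $K_k$ on $Y_S$ gives $k-1$ internally disjoint $u$--$v$ paths, and one more is obtained by following the unique $S$-edge at $u$, then a path inside the connected graph $G[X]$ to the $X$-end of the unique $S$-edge at $v$, then that edge; its interior lies in $X$, so it is internally disjoint from the first $k-1$. Hence $\kappa_{G_X}(u,v)\ge k$. Otherwise at least one of $u,v$ lies in $X$; take $k$ internally disjoint $u$--$v$ paths in $G$ and, in each, replace every maximal subpath lying entirely in $Y$ — whose two ends necessarily lie in $Y_S$, each being either an end of the whole path that lies in $Y$ or a vertex adjacent along the path to a vertex of $X$ — by the corresponding clique edge of $G_X$, deleting its interior. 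Each path becomes a $u$--$v$ path of $G_X$ supported on a subset of its original vertices, so the $k$ resulting paths remain pairwise internally disjoint, giving $\kappa_{G_X}(u,v)\ge k$. Thus $G_X$ is $k$-connected.

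\textbf{$G_X$ has maximal local edge-connectivity $k$.} Fix distinct $u,v\in V(G_X)$; I must show $\lambda_{G_X}(u,v)\le k$. If $u,v\in Y_S$ then $\deg_{G_X}(u)=(|Y_S|-1)+s_u=k$, so $\lambda_{G_X}(u,v)\le k$. Otherwise, say $u\in X$. Start from a partition $(A,B)$ of $V(G)$ with $u\in A$, $v\in B$ and $d(A)=k$ (a minimum $u$--$v$ edge cut of $G$). Using submodularity of $d$ (see \cite[Exercise~2.5.4(b)]{bondyAndMurty}) together with $d(X)=d(Y)=k$, I would uncross $A$ with $X$ or $Y$: if $v\in Y$ then both $A\cap X$ and $A\cup X$ separate $u$ from $v$, so both have $d=k$ and I replace $A$ by $A\cap X$; if $v\in X$ then either $A\subseteq X$ already, or $A\cup Y$ separates $u$ from $v$ and has $d=k$, and I replace $A$ by $A\cup Y$. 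In every case I obtain a partition with $u\in A$, $v\in B$, $d(A)=k$, and $Y_S$ lying entirely inside $A$ or entirely inside $B$. Then $A\cap V(G_X)$ is a $u$--$v$ edge cut of $G_X$ whose crossing edges are of two kinds: edges of $G$ between $A$ and $B$ (at most $d(A)=k$ of them) and new clique edges between $A\cap Y_S$ and $B\cap Y_S$ (none, since one of these sets is empty). Hence $\lambda_{G_X}(u,v)\le k$, and $G_X$ has maximal local edge-connectivity $k$.

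\textbf{Main obstacle.} The substantive part is the structural dichotomy: it is precisely the hypotheses ``$k$-connected'' and ``maximal local edge-connectivity $k$'' that force $|Y_S|=k$, and this is exactly what makes the case $u,v\in Y_S$ of the connectivity argument go through — if $|Y_S|$ were smaller the clique on $Y_S$ would no longer supply $k-1$ internally disjoint paths, and $\kappa_{G_X}(u,v)$ could genuinely drop below $k$. Getting that counting argument right, and checking that the alternative $Y=Y_S$ really does collapse to $G_X=G$, is where the care is needed; the two lifting arguments (path-rerouting for connectivity, cut-rerouting for edge-connectivity) are then routine.
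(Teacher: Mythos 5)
Your proof is correct, but it reaches the two conclusions by a partly different route than the paper. The $k$-connectivity half is essentially the paper's argument: it, too, lifts internally disjoint paths of $G$ to $G_X$ by shortcutting their excursions into $Y$ via clique edges, and for $u,v\in Y_S$ combines $k-1$ paths inside the clique with one path through the connected graph $G[X]$ (your path-replacement case also absorbs the mixed case $u\in X$, $v\in Y_S$, which the paper handles separately with the Fan Lemma). The two real differences are these. First, you prove an explicit structural dichotomy — either $G_X=G$, or $|Y_S|=k$ with the edges of $S$ having distinct ends in $Y$ — which the paper never states: its claim that $G_X[Y_S]$ contains $k-1$ internally disjoint $uv$-paths tacitly assumes $|Y_S|=k$, which is guaranteed in the paper's application because the cut comes from \cref{structurelemma2} with vertex-disjoint edges, but your fan/counting argument shows the lemma is sound exactly as stated; this is a genuine gain in rigour. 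Second, for $\lambda_{G_X}(u,v)\le k$ the paper works on the path side, converting edge-disjoint path systems of $G_X$ into equally large systems of $G$ by supplying, via the Fan Lemma, $\lfloor k/2\rfloor$ edge-disjoint connections through $Y$ between vertices of $X_S$; you instead work on the cut side, uncrossing a minimum $u$--$v$ edge cut of $G$ with $X$ or $Y$ using the same submodular inequality the paper invokes in \cref{structurelemma2}, so that $Y_S$ lies entirely on one side and no added clique edge crosses, giving an explicit cut of size at most $k$ in $G_X$ (with the case $u,v\in Y_S$ dispatched by the degree count $d_{G_X}(u)=k$). Your cut-based argument is arguably cleaner and more airtight than the paper's path-substitution step; what the paper's version buys is that it uses no machinery beyond the Fan Lemma it already needs for the connectivity half.
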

\begin{proof}
  First we show that $G_X$ has maximal local edge-connectivity~$k$.
  The only vertices of degree more than $k$ in $G_X$ are in $X$.
  Suppose $u$ and $v$ are vertices in $X$ of degree more than $k$. 
  Clearly, for each $uv$-path in $G_X[X]$ there is a corresponding $uv$-path in $G[X]$.
  We show that there are at least as many edge-disjoint $uv$-paths that pass through an edge of $S$ in $G$ as there are in $G_X$; it follows that $\lambda_{G_X}(u,v) \leq \lambda_{G}(u,v) \leq k$.
  Since $S$ is a $k$-edge cut in $G_X$, there are at most $\lfloor{k/2\rfloor}$ edge-disjoint paths in $G_X$ starting and ending at a vertex in $X_S$.
  Let $y$ be a vertex in $Y$.
  Since $G$ is $k$-connected, the Fan Lemma (see, for example, \cite[Proposition~9.5]{bondyAndMurty}) implies that there are $k$ paths from $y$ to each member of $X_S$ that meet only in $y$.  Hence, there are $\lfloor{k/2\rfloor}$ edge-disjoint paths
 in $G[Y \cup X_S]$ starting and ending at a vertex in $X_S$.
 Thus, we deduce that $G_X$ has maximal local edge-connectivity~$k$.

  We now show that $G_X$ is $k$-connected, by demonstrating that $\lc_{G_X}(u,v) \geq k$ for all distinct $u,v \in V(G_X)$.
  First, suppose that $u,v \in X$.
  Evidently, for each $uv$-path in $G[X]$ there is a corresponding $uv$-path in $G_X[X]$.
  Moreover, each $uv$-path in $G$ that traverses an edge of $S$ traverses two such edges $xy$ and $x'y'$, say, where $x,x' \in X_S$ and $y,y' \in Y_S$.  By replacing the $x'y'$-path in $G$ with the edge $x'y'$ in $G_X$, we obtain a $uv$-path of $G_X$.
  We deduce that $\lc_{G_X}(u,v) \geq \lc_G(u,v) \geq k$ for any $u,v \in X$.
  Now suppose $u, v \in Y_S$.  Then there are $k-1$ internally disjoint $uv$-paths in $G_X[Y_S]$.  Pick $u',v' \in X_S$ such that $uu'$ and $vv'$ are in $S$.
  Since $G_X[X]$ is connected, there is at least one $u'v'$-path in $G_X[X]$, so there are $k$ internally disjoint $uv$-paths in $G_X$.
  Finally, let $u \in X$ and $v \in Y_S$.  Since $G$ is $k$-connected,
  the Fan Lemma implies that
  there are $k$ paths from $u$ to each vertex of $Y_S$ in $G$ that meet only in $u$.  Hence there are $k$ such paths in $G_X$.
  Since $Y_S$ is a clique in $G_X$, there are $k$ internally disjoint $uv$-paths in $G_X$.
  Thus $\lc_{G_X}(u,v) \geq k$ for all distinct $u,v \in V(G_X)$, as required.
\end{proof}

\begin{proposition}
  \label{p1}
  Let $G$ be a $k$-connected graph, for $k \geq 3$, with maximal local edge-connectivity~$k$ and at least two vertices of degree more than $k$.  Then $G$ is $k$-colourable.
\end{proposition}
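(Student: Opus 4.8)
The plan is to argue by induction on the number $p\ge 2$ of vertices of degree more than $k$ in $G$, reducing the $k$-colourability of $G$ to that of two auxiliary graphs, each having fewer vertices of large degree than $G$, obtained from the edge cut provided by \cref{structurelemma2}.

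First I would apply \cref{structurelemma2} to get a $k$-edge cut $S$ whose edges are pairwise vertex-disjoint and such that one side of the cut contains precisely one vertex of degree more than $k$; write $(X,Y)$ for the associated partition of $V(G)$, with $X$ the side containing a unique vertex of large degree. Since $k$-connectivity implies $k$-edge-connectivity, a short argument (were $X$ or $Y$ to split further, its two parts would together send more than $k$ edges across the cut) shows that $S$ is exactly the set of edges of $G$ between $X$ and $Y$ and that $G[X]$ and $G[Y]$ are both connected; thus $|X_S|=|Y_S|=k$ and $G\ba S$ has exactly the two components $G[X]$ and $G[Y]$, so \cref{lnew} is applicable. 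By \cref{close} (and its symmetric counterpart for $G_Y$), both $G_X$ and $G_Y$ are $k$-connected graphs with maximal local edge-connectivity~$k$. Because the edges of $S$ are vertex-disjoint, no vertex of $X$ changes degree in passing from $G$ to $G_X$ and every vertex of $Y_S$ has degree exactly~$k$ in $G_X$; hence $G_X$ has exactly one vertex of large degree, and, symmetrically, $G_Y$ has exactly $p-1$.

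Next I would show that $G_X$ is $k$-colourable. It is $3$-connected, it has at most one vertex of degree more than~$k$, and it has no dominating vertex: a vertex of $Y_S$ has degree $k$ while $|V(G_X)|\ge k+2$ (some vertex has degree at least $k+1$), and a vertex of $X$ is adjacent in $G_X$ to at most one vertex of $Y_S$ (again by vertex-disjointness of $S$) whereas $|Y_S|=k\ge 3$. So \cref{3c_nd} yields a $k$-colouring $\psi$ of $G_X$. The key point is that $Y_S$ is a clique of size~$k$ in $G_X$, so $\psi$ uses all $k$ colours on $Y_S$; and since each vertex of $Y_S$ is joined by an edge of $S$ to the vertex of $X_S$ matched with it, an $X_S$ that were monochromatic, say in colour~$c$, would forbid colour~$c$ on every vertex of $Y_S$ --- a contradiction. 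Hence $\phi_1:=\psi|_{X}$ is a proper $k$-colouring of $G[X]$ with $|\phi_1(X_S)|\ge 2$. The graph $G_Y$ is treated the same way when $p=2$, since it then also has a unique vertex of large degree and no dominating vertex, so \cref{3c_nd} applies; when $p\ge 3$ it has $p-1\ge 2$ vertices of large degree, so the induction hypothesis applies. Either way $G_Y$ has a $k$-colouring, and, $X_S$ being a clique of size~$k$ in $G_Y$, the same argument gives a proper $k$-colouring $\phi_2$ of $G[Y]$ with $|\phi_2(Y_S)|\ge 2$.

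To conclude, $|\phi_1(X_S)|$ and $|\phi_2(Y_S)|$ both lie in $\{2,\dots,k\}$, so $\{|\phi_1(X_S)|,|\phi_2(Y_S)|\}\neq\{1,k\}$, and \cref{lnew} then produces a $k$-colouring of $G$. I expect the main difficulty to be the preliminary bookkeeping rather than any isolated clever step: checking that $S$ is the full edge boundary of $X$ and that both sides stay connected (so that \cref{lnew} can be invoked at all), and then verifying that $G_X$ and $G_Y$ have exactly one, respectively $p-1$, vertices of large degree and no dominating vertex --- these being exactly the hypotheses that let us feed them to \cref{3c_nd} or to the induction hypothesis. Once those are in place, the observation that a rainbow $k$-clique on the boundary forces at least two colours there, together with \cref{lnew}, completes the proof.
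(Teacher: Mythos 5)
Your proposal is correct and follows essentially the same route as the paper's proof: induction on the number of large-degree vertices, using \cref{structurelemma2} to find the cut, \cref{close} together with \cref{3c_nd} (and the rainbow clique on the boundary) to colour each side with the boundary not monochromatic, and \cref{lnew} to merge. You in fact verify a few points the paper leaves implicit (that $G\ba S$ has exactly two components, the degree counts in $G_X$ and $G_Y$, and the absence of dominating vertices), all of which check out.
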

\begin{proof}
  The proof is by induction on the number of vertices of degree more than $k$.
  First we show that the \lcnamecref{p1} holds when $G$ has precisely two vertices of degree more than $k$.
  Let $x$ and $y$ be distinct vertices of $G$ with degree more than $k$.
  By \cref{structurelemma2}, there is a $k$-edge cut $S$ that separates $X$ from $Y$, where $x \in X$, $y \in Y$, $(X,Y)$ is a partition of $V(G)$, and $X$ contains precisely one vertex of degree more than $k$.
  Consider the graph~$G_{X}$; 
  this graph is $3$-connected by \cref{close}, and has no dominating vertices by definition.
  Hence, by \cref{3c_nd}, $G_{X}$ is $k$-colourable.
  Moreover, in such a $k$-colouring, the vertices in $Y_S$ are given $k$ different colours, since they form a $k$-clique, and hence the vertices in $X_S$ are not all the same colour.
  So $G_{X}[X] = G[X]$ is $k$-colourable in such a way that the vertices in $X_S$ are not all the same colour.
  By symmetry, $G[Y]$ is $k$-colourable in such a way that the vertices in $Y_S$ are not all the same colour.
  It follows, by \cref{lnew}, that $G$ is $k$-colourable.

  Now let $G$ be a graph with $p$ vertices of degree more than $k$, for $p > 2$.
  We assume that a $k$-connected graph with maximal local edge-connectivity~$k$, and $p-1$ vertices of degree more than $k$ is $k$-colourable.
  By \cref{structurelemma2}, there is a $k$-edge cut $S$ that separates $X$ from $Y$, where $X$ contains precisely one vertex $x$ of degree more than $k$, and $(X,Y)$ is a partition of $V(G)$.
  The graph $G_Y$ is $k$-connected and has maximal local edge-connectivity~$k$, by \cref{close}.
  Thus, by the induction assumption, $G_Y$ is $k$-colourable.  It follows that $G[Y]$ is $k$-colourable in such a way that the vertices in $Y_S$ are not all the same colour.
  The graph $G_X$ is $3$-connected, by \cref{close}, so is $k$-colourable, by \cref{3c_nd}. So $G[X]$ is $k$-colourable in such a way that the vertices in $X_S$ are not all the same colour.
  Thus, by \cref{lnew}, $G$ is $k$-colourable.  The \lcnamecref{p1} follows by induction.
\end{proof}

\begin{proof}[Proof of \cref{main}]
  Clearly if $G$ is a complete graph, then $G$ is $K_{k+1}$ and is not $k$-colourable.
  If $G$ is an odd wheel, then, since $G$ is not $4$-connected, we have $k=3$, and $G$ is not $3$-colourable.
  This proves one direction.
  Now suppose $G$ is not $k$-colourable and has $p$ vertices of degree more than $k$.
  Then $p < 2$, by \cref{p1}.
  If $p = 0$, then $G$ is a complete graph, by Brooks' theorem (an odd cycle is not $k$-connected for any $k\geq 3$).  If $p=1$, then $G$ has a dominating vertex $v$, by \cref{3c_nd}.
  Since $G-\{v\}$ is not $(k-1)$-colourable, and $G-\{v\}$ has maximum degree $k-1$, it follows, by Brooks' theorem, that $G-\{v\}$ is a complete graph or an odd cycle.  Thus $G$ is a complete graph or an odd wheel.
\end{proof}

\begin{corollary}
  \label{algo}
  Let $G$ be a $k$-connected graph with maximal local edge-connectivity~$k$.
  There is a polynomial-time algorithm that finds a $k$-colouring for $G$ when $G$ is $k$-colourable, or a $(k+1)$-colouring otherwise.
\end{corollary}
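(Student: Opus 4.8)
The plan is to check that every step in the proof of \cref{main} and the lemmas it uses (\cref{3c_nd,structurelemma2,close,lnew,p1}) is algorithmic, and then to bound the total running time. We may assume $k\ge 3$, the cases $k\le 2$ being trivial. First, test in polynomial time whether $G$ is complete or an odd wheel, and if so output a $(k+1)$-colouring directly; by \cref{main}, in every other case $G$ is $k$-colourable and we must exhibit such a colouring. We do this by induction on the number $p$ of vertices of degree more than $k$, following \cref{p1}.

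\emph{Base case $p\le 1$.} If $G$ has no dominating vertex, the proof of \cref{3c_nd} is already an $O(n+m)$ algorithm: pick a maximum-degree vertex $h$, a vertex $y$ at distance two from $h$, a common neighbour $z_1$, a search ordering of $G-\{h,y\}$ rooted at $z_1$, and then colour greedily from the last vertex back to $z_1$. If $G$ has a dominating vertex $v$, then $G-v$ is connected with maximum degree $k-1$ and is neither $K_k$ nor an odd cycle (otherwise $G$ would be complete or an odd wheel, already excluded), so the constructive proof of Brooks' theorem in \cite{Lovasz1975} yields a $(k-1)$-colouring of $G-v$, which we extend to $G$ by giving $v$ a new colour.

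\emph{Inductive step $p\ge 2$.} The one substantive point is to make \cref{structurelemma2} constructive, i.e.\ to compute the $k$-edge cut $S$ it provides. Since $G$ is $k$-connected with maximal local edge-connectivity~$k$, we have $\lambda(u,w)=k$ for all distinct $u,w$; hence for any ordered pair $(x,w)$ a single maximum-flow computation returns the inclusion-minimal $x$-side $A_{x,w}$ of a minimum $x$--$w$ edge cut (the vertices reachable from $x$ in the residual network), and $d(A_{x,w})=k$. Pick a vertex $x$ of degree more than $k$ lying in some good set in the sense of the proof of \cref{structurelemma2} (one exists, since at most one vertex of $G$ lies in no good set and $p\ge 2$). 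Any set of cut value $k$ separating $x$ from $w$ is automatically a minimum $x$--$w$ cut, so a vertex $w\ne x$ lies in the unique minimal good set $X$ containing $x$ precisely when $|A_{x,w}|>n/2$; thus $X$ is computed with $O(n)$ maximum-flow computations. By the argument of \cref{structurelemma2}, $X$ contains exactly one vertex of degree more than $k$, and the set $S$ of edges leaving $X$ is a $k$-edge cut with vertex-disjoint edges, whose two sides give graphs $G_X$ and $G_Y$, each on fewer than $n$ vertices. Here $G_X$ has a single vertex of degree more than $k$ and, by \cref{close}, is $3$-connected with no dominating vertex, so the base case applies; and $G_Y$, by \cref{close}, is $k$-connected with maximal local edge-connectivity~$k$ and has $p-1$ vertices of degree more than $k$, so we recurse on it. Given $k$-colourings of $G[X]$ and $G[Y]$ in which the vertices incident to $S$ on the relevant side use more than one colour (as in \cref{p1}), the proof of \cref{lnew} constructs an auxiliary graph $H$ on at most $2k$ vertices, colours it (again by constructive Brooks), and reads off the permutation merging the two colourings into a $k$-colouring of $G$.

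Each step decreases $p$ by one, so there are at most $n$ steps, each performing $O(n)$ maximum-flow computations together with constant-size colouring subroutines; the total running time is polynomial. The main obstacle is the one isolated above: the existence proof of \cref{structurelemma2} is non-constructive, relying on submodularity of the cut function, so the work is in recovering the required minimal good set from the residual networks of a linear number of max-flow computations. The final claim about a $(k+1)$-colouring when $G$ is not $k$-colourable is immediate, since then $G$ is $K_{k+1}$ or an odd wheel, for which a $(k+1)$-colouring is obvious.
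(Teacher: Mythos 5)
Your overall architecture matches the paper's proof of \cref{algo}: base case via \cref{3c_nd} (or constructive Brooks when there is a dominating vertex), recursive step via a constructive version of \cref{structurelemma2} together with \cref{close}, and merging via \cref{lnew}. The one place where you depart from the paper---how you make \cref{structurelemma2} constructive---contains a genuine error. You compute $X$ as the unique minimal good set containing a \emph{fixed} high-degree vertex $x$ (your residual-reachability characterisation of that set is itself correct), and then assert that ``by the argument of \cref{structurelemma2}, $X$ contains exactly one vertex of degree more than $k$.'' That argument does not apply to the minimal good set containing a prescribed vertex: it applies to a good set that is minimal among \emph{all} good sets containing at least one high-degree vertex. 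If $y\in X$ is a second high-degree vertex, the paper's argument produces a good set containing exactly one of $x,y$, and intersecting it with $X$ gives a smaller good set containing a high-degree vertex---but possibly $y$ rather than $x$, so there is no contradiction with the minimality of $X$ among good sets containing $x$. Concretely, extend the construction of \cref{figk4} to many columns: the only $k$-edge cuts lie between consecutive columns, so for the high-degree vertex $x$ in the third column from the nearer end, \emph{every} good set containing $x$ also contains the high-degree vertex of the second column. Your computed $X$ then has two vertices of degree more than $k$, and your algorithm hands $G_X$ to the base case \cref{3c_nd}, whose hypothesis (at most one vertex of degree more than $k$) fails, so the greedy colouring step can get stuck.

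The gap is repairable within your framework: if your $X$ contains a second high-degree vertex $y$, then (by closure of intersecting good sets under intersection) the minimal good set containing $y$ is a proper subset of $X$ that excludes $x$; recentre on $y$ and recompute. Each recentring permanently discards at least one high-degree vertex, so after at most $p$ rounds, i.e.\ $O(pn)$ max-flow computations, you reach a good set with exactly one vertex of degree more than $k$. This is in effect what the paper does: it repeatedly finds a $k$-edge cut separating two high-degree vertices inside the current small side and intersects, rather than insisting on retaining the originally chosen $x$. The rest of your proposal---the base case, the use of \cref{close}, the merging via \cref{lnew}, the $(k+1)$-colouring of $K_{k+1}$ and odd wheels, and the polynomial running-time bound---is sound and follows the paper.
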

\begin{proof}
%
  Suppose $G$ has at most one vertex of degree more than $k$.  If $G$ has no dominating vertices, then the proof of \cref{3c_nd} leads to an algorithm for $k$-colouring $G$.
  Otherwise, when $G$ has a dominating vertex $v$, the problem reduces to finding a $(k-1)$-colouring for $G-\{v\}$,
  where $G-\{v\}$ has maximum degree~$k-1$.
  In either case, we have a linear-time algorithm for colouring $G$.

  When $G$ has at least two vertices $x$ and $y$ of degree more than $k$, we use the approach taken in the proof of \cref{p1}.
  We can find a $k$-edge cut $S$ that separates $x$ and $y$ in $O(km)$ time,
  by an application of the Ford-Fulkerson algorithm. 
  Without loss of generality, $x$ is contained in a component of $G \ba S$ with at most $n/2$ vertices.
  It follows, by the proof of \cref{structurelemma2}, that 
  with $O(n)$ applications of the Ford-Fulkerson algorithm we can obtain an edge cut $S'$ such that $x$ is the only vertex of degree more than $k$ in one component $X$ of $G \ba S'$.  
  Thus we can find the desired $k$-edge cut $S'$  in $O(knm) = O(nm)$ time.
  Let $Y = V(G) \setminus X$, and
  let $G_X$ and $G_Y$ be as defined just prior to \cref{close}.
  As $G_X$ is $3$-connected by \cref{close}, and has no dominating vertices by definition,
  we can find a $k$-colouring $\phi_X$ for $G_X$ in linear time by \cref{3c_nd}.
  To find a $k$-colouring $\phi_Y$ for $G_Y$, if one exists, we repeat this process recursively.
  Then, by \cref{lnew}, we can extend $\phi_Y$ to a $k$-colouring of $G$ by finding a permutation for $\phi_X$, which can be done in constant time.
  When $G$ has $p$ vertices of degree more than $k$, 
  this process takes $O(pnm)$ time.
  Since $p \leq n$, the algorithm runs in $O(n^2m)$ time.
\end{proof}

\subsection*{An extension of Brooks' theorem when $k=3$}

We now work towards proving \cref{thm:brooks3gen}.
Recall that a \fatras\ is either an odd wheel, or a graph that can be obtained from odd wheels by applying the Haj\'os join.  We restate the theorem here in terms of wheel morasses:
\begin{theorem}
  \label{thm:brooks3gen2}
  Let $G$ be a graph with maximal local edge-connectivity~$3$.
  Then $G$ is $3$-colourable if and only if each block of $G$ is not a \fatras.
\end{theorem}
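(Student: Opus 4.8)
The plan is to prove \cref{thm:brooks3gen2} by reducing to the $3$-connected case handled by \cref{main} (equivalently \cref{colour_nl4c}, since $\widehat{\mathcal{C}}_1^3 = \widehat{\mathcal{C}}_2^3$), using the fact that $\mathcal{C}_1^3$ is closed under subgraphs and that a graph is $3$-colourable if and only if each of its blocks is. The first step is to observe that if $G$ has maximal local edge-connectivity~$3$, then so does every block of $G$, since blocks are subgraphs; and $G$ is $3$-colourable if and only if each block is $3$-colourable (colour the blocks one at a time, permuting colours at the shared cut-vertices). This reduces the theorem to the case where $G$ is $2$-connected (a single block): I must show that a $2$-connected graph with maximal local edge-connectivity~$3$ is $3$-colourable if and only if it is not a wheel morass.

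For the \emph{only if} direction, I would show that every wheel morass has chromatic number $4$. An odd wheel is well known to have chromatic number $4$. Then I would argue that the Haj\'os join preserves non-$3$-colourability: this is the classical property of the Haj\'os construction — if $G_1$ and $G_2$ are not $3$-colourable, neither is their Haj\'os join (a standard argument: in a purported $3$-colouring of the join, $v_1$ and $v_2$ get different colours since they are adjacent, so one of $u_1 v_1$, $u_2 v_2$ would be properly coloured in the corresponding $G_i$, a contradiction). Hence no wheel morass is $3$-colourable. I also need that a wheel morass is $2$-connected and lies in $\mathcal{C}_1^3$; both can be checked by induction on the number of Haj\'os joins, using that an odd wheel is $2$-connected with local edge-connectivity exactly $3$ between any pair, and that a Haj\'os join of two such graphs retains these properties (the new edge $v_1 v_2$ and the identified vertex $u_1 = u_2$ keeping things $2$-connected, and any pair of vertices being separated by a $3$-edge cut — one verifies that putting $4$ edge-disjoint paths between two vertices would force $4$ such paths within one of the $G_i$).

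For the \emph{if} direction, suppose $G$ is $2$-connected, has maximal local edge-connectivity~$3$, and is not $3$-colourable; I want to conclude $G$ is a wheel morass. If $G$ is $3$-connected, then $G \in \widehat{\mathcal{C}}_1^3$, so by \cref{main} $G$ is $K_4$ or an odd wheel; $K_4$ is itself an odd wheel (the triangle plus a hub), so $G$ is an odd wheel, hence a wheel morass. If $G$ is not $3$-connected, it has a $2$-vertex cut $\{a,b\}$. Here is where the Haj\'os join should appear: split $G$ along $\{a,b\}$ into pieces $G[A \cup \{a,b\}]$ and $G[B \cup \{a,b\}]$, add the edge $ab$ to each piece (or reason about whether $ab \in E(G)$ already), obtaining graphs $H_1$ and $H_2$ that are smaller, $2$-connected, and — crucially — still have maximal local edge-connectivity~$3$ (one must check the added edge does not create a pair with $4$ edge-disjoint paths, using that $\{a,b\}$ being a $2$-cut limits the edge-connectivity across it; Mader-type / submodularity arguments as in \cref{structurelemma2} and \cref{close} should do this, or a direct Menger count). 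Then argue $G$ is (essentially) the Haj\'os join of $H_1$ and $H_2$, so that $G$ non-$3$-colourable forces at least one $H_i$ to be non-$3$-colourable; by induction that $H_i$ is a wheel morass, and one shows the other $H_j$ must then be a wheel morass too (if $H_j$ were $3$-colourable one could combine colourings), so $G$, being their Haj\'os join, is a wheel morass.

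The main obstacle I anticipate is the $2$-cut reduction in the \emph{if} direction: correctly formalising how a $2$-connected graph in $\mathcal{C}_1^3$ with a $2$-vertex cut decomposes as a Haj\'os join of two strictly smaller graphs that remain in $\mathcal{C}_1^3$. One has to handle the cases according to whether the cut vertices $a,b$ are adjacent in $G$, ensure the pieces are genuinely smaller and stay $2$-connected (possibly after suppressing degree-$2$ vertices or choosing the cut carefully — e.g. a cut inducing a minimal side), and verify the local edge-connectivity bound is preserved under adding the virtual edge $ab$ — this is the delicate submodularity/Menger bookkeeping, analogous to but trickier than \cref{close}. Getting the bookkeeping to show that \emph{both} pieces end up being wheel morasses (not just one) is the other subtle point, and will likely use that each $H_i$ inherits $2$-connectivity and that a wheel morass is "non-$3$-colourable in a robust way" — precisely, that in any $2$-colouring attempt of a wheel morass minus the virtual edge, the two endpoints $a,b$ are forced to be equal, which is the inductive statement one really wants to carry through the Haj\'os construction.
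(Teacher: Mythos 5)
Your block reduction, the ``only if'' direction (odd wheels are $4$-chromatic and the Haj\'os join preserves non-$3$-colourability, i.e.\ $4$-criticality), the $3$-connected base case via \cref{main}, and the observation that the two pieces of a $2$-separation, with the virtual edge $xy$ added, stay in $\mathcal{C}_1^3$ all match the paper. The gap is in the heart of the induction step: a graph with a $2$-vertex cut $\{x,y\}$ is \emph{not} ``essentially the Haj\'os join of $H_1$ and $H_2$''. The Haj\'os join identifies a \emph{single} vertex and connects the two pieces by a \emph{single} new edge, whereas your decomposition shares two vertices; so the step ``$G$, being their Haj\'os join, is a wheel morass'' does not go through as stated, and there is no obvious repair at this level of generality. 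The paper's proof gets the Haj\'os structure only after a further reduction: it first proves (\cref{prop:fatras}(ii)) that a wheel morass has $\lambda(u,v)\geq 3$ for \emph{every} pair of vertices. If $H_A$ is a wheel morass, this forces $xy\notin E(G)$ and, since $G_A$ already carries two edge-disjoint $xy$-paths while $\lambda_G(x,y)\leq 3$, it forces $\lambda_{G_B}(x,y)\leq 1$; hence $G_B$ contains a bridge $x'y'$ separating $x$ from $y$. It is this bridge, together with one cut vertex, that realises $G$ as a Haj\'os join (identify $x$ or $y$, use $x'y'$ as the join edge), possibly via two successive joins when $x\neq x'$ and $y\neq y'$.

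Your auxiliary claim ``if $H_j$ were $3$-colourable one could combine colourings'' is also false and hides the real case analysis: when $H_A$ is a wheel morass, $4$-criticality forces every $3$-colouring of $G_A=H_A\setminus xy$ to give $x$ and $y$ the \emph{same} colour, while a $3$-colouring of $H_B$ gives them \emph{different} colours, so the two colourings cannot simply be pieced together; whether $G$ is $3$-colourable then depends on the finer structure of $G_B$ around the bridge (whether $x=x'$ or $y=y'$, whether $xx'$ or $yy'$ are edges, and whether the sub-pieces $H_x$, $H_y$ are themselves wheel morasses), and in several of these branches the paper constructs an explicit $3$-colouring of $G$ using $4$-criticality rather than concluding that $G$ is a wheel morass. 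Without \cref{prop:fatras}(ii) and the resulting bridge in $G_B$, your sketch cannot produce the single identified vertex and single join edge that the definition of a wheel morass requires, so the inductive step is genuinely incomplete.
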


%
%
Let us now establish some properties of \fatras es.
A graph $G$ is \emph{$k$-critical} if $\chi(G) = k$ and every proper subgraph $H$ of $G$ has $\chi(H) < k$.

\begin{proposition}\label{prop:fatras}
  Let $G$ be a \fatras.  Then
\begin{enumerate}[label=\rm (\roman*)]
\item $G$ is $4$-critical, and
\item for every two distinct vertices $x$ and $y$, we have $\lambda(x,y)\geq 3$.
\end{enumerate}
\end{proposition}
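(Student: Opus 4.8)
The plan is to argue both parts by induction on the number of Haj\'os joins used to build the wheel morass $G$ from odd wheels.

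For the base case, $G$ is an odd wheel. An odd wheel is $4$-chromatic (the rim is an odd cycle, needing at least $3$ colours, and the hub is adjacent to all rim vertices, forcing a fourth), and deleting any edge makes it $3$-colourable: if we delete a rim edge the rim becomes a path, colourable with $2$ colours, leaving a third for the hub; if we delete a spoke, the remaining spokes still force the hub's colour but the rim is now properly $3$-colourable with a colour distinct from the hub on all vertices except the one whose spoke was deleted, where we may reuse the hub colour. Hence an odd wheel is $4$-critical, giving (i). For (ii), every vertex of an odd wheel has degree at least $3$, and for adjacent $x,y$ one readily finds $3$ edge-disjoint $xy$-paths, while for non-adjacent $x,y$ (necessarily both on the rim) one uses the two arcs of the rim plus the path through the hub; so $\lambda(x,y)\ge 3$ throughout.

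For the inductive step, write $G$ as the Haj\'os join of wheel morasses $G_1,G_2$ with respect to ordered pairs of adjacent vertices $(u_1,v_1)$ and $(u_2,v_2)$: delete $u_1v_1$ and $u_2v_2$, identify $u_1$ with $u_2$ to a single vertex $u$, and add the edge $v_1v_2$. By the inductive hypothesis each $G_i$ is $4$-critical. For (i), the standard Haj\'os-construction argument applies: $G$ is not $3$-colourable, since in any $3$-colouring $\phi$ we would need $\phi(v_1)=\phi(v_2)$ (as $v_1v_2\in E(G)$ is impossible otherwise — restated: $\phi(v_1)\ne\phi(v_2)$), and then restricting to $G_i$ and re-adding the edge $u_iv_i$ would yield a $3$-colouring of $G_i$ unless $\phi(u)=\phi(v_i)$ for some $i$; combining $\phi(v_1)\ne\phi(v_2)$ with $\phi(u)=\phi(v_1)=\phi(v_2)$ is contradictory, so one of the $G_i$ is $3$-coloured, contradicting $\chi(G_i)=4$. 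For criticality, given any edge $e$ of $G$ one checks $\chi(G\ba e)\le 3$ by a case analysis according to whether $e=v_1v_2$, or $e$ lies in $G_1$, or in $G_2$: using that $G_i\ba f$ is $3$-colourable for every edge $f$ of $G_i$ and that $G_i$ itself minus the appropriate ``phantom'' edge $u_iv_i$ behaves like $G_i\ba u_iv_i$, one assembles colourings of the two sides that agree on $u$ and realise the needed relation between $\phi(v_1)$ and $\phi(v_2)$. This is routine but must be done carefully: it is essentially the verification, originally due to Haj\'os, that the Haj\'os join of two $4$-critical graphs is $4$-critical.

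For (ii), I use \cref{thm:brooks3gen2} only indirectly; instead I argue directly on local edge-connectivity under the Haj\'os join. Let $x,y$ be distinct vertices of $G$. If both lie in $V(G_1)$, then using the inductive bound $\lambda_{G_1}(x,y)\ge 3$ I must account for the deleted edge $u_1v_1$: the three edge-disjoint $xy$-paths in $G_1$ use $u_1v_1$ at most once, and a path using $u_1v_1$ can be rerouted in $G$ through $u=u_1=u_2$, the edge $v_1v_2$, around $G_2$, and back — here I invoke that $G_2\ba u_2v_2$ still has $\lambda_{G_2\ba u_2v_2}(u_2,v_2)\ge 2$, which follows from $\lambda_{G_2}(u_2,v_2)\ge 3$. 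This yields $3$ edge-disjoint $xy$-paths in $G$; the case $x,y\in V(G_2)$ is symmetric. If $x\in V(G_1)\setminus\{u\}$ and $y\in V(G_2)\setminus\{u\}$, I combine a fan of $3$ edge-disjoint paths from $x$ to $\{u,v_1\}$ (two of them to $u$, using $\lambda_{G_1}(x,u)\ge 3$ and $\lambda_{G_1}(x,v_1)\ge 3$ via a fan/Menger argument in $G_1\ba u_1v_1$) with the corresponding structure on the $G_2$ side, linking via the edge $v_1v_2$ and through $u$. The main obstacle is exactly this bookkeeping: ensuring the rerouted paths stay edge-disjoint and correctly handle the single deleted edge on each side, together with the case analysis in the criticality proof. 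Everything reduces to the two facts $\chi(G_i)=4$ with $G_i$ critical, and $\lambda_{G_i\ba u_iv_i}(u_i,v_i)\ge 2$, both available by induction.
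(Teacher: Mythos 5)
Your proposal is correct and takes essentially the same route as the paper: induction on the number of Haj\'os joins, with (i) reduced to the standard fact that a Haj\'os join of $4$-critical graphs is $4$-critical (which the paper simply cites), and (ii) handled by the identical case split --- both vertices in one $G_i$, rerouting a path that used $u_iv_i$ through the edge $v_1v_2$ and the other side, versus one vertex in each $G_i$, joined via two paths to $u$ and one path to $v_1$ (resp.\ $v_2$) on each side. The only cosmetic difference is that you justify the two-to-$u$/one-to-$v_1$ fan in $G_1 \setminus u_1v_1$ by an edge-Menger argument, where the paper constructs it explicitly from three edge-disjoint $xu_1$-paths; both work.
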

\begin{proof}
(i) It is well known that the Haj\'os join of two $k$-critical graphs is $k$-critical (see, for example, \cite[Exercise~14.2.9]{bondyAndMurty}).
Since the odd wheels are $4$-critical, we immediately get, by induction, that every \fatras\ is $4$-critical.


(ii) We prove this by induction on the number of Haj\'os joins.
The result can easily be checked for odd wheels.

Assume now that $G$ is the Haj\'os join of $G_1$ and $G_2$ with respect to $(u_1,v_1)$ and $(u_2,v_2)$.
Let $x$ and $y$ be two vertices in $G$.
If $x\in V(G_1)$ and $y\in V(G_1)$, then, by the induction hypothesis, there are three edge-disjoint $xy$-paths in $G_1$.
If one them contains $v_1u_1$, then replace it by the concatenation of $v_1v_2$ and a $v_2u_2$-path in $G_2\ba u_2v_2$ (such a path exists since $\lambda_{G_2}(u_2,v_2)\geq 3$ by the induction hypothesis). 
This results in three edge-disjoint $xy$-paths, so $\lambda_G(x,y)\geq 3$.
Likewise, if $x\in V(G_2)$ and $y\in V(G_2)$, then $\lambda_G(x,y)\geq 3$.

Assume now that $x\in V(G_1)$ and $y\in V(G_2)$.
Let us prove the following:

\begin{sublemma}\label{claima}
In $G_1\ba u_1v_1$, there are three edge-disjoint paths $P_1$, $P_2$ and $P_3$ such that $P_1$ and $P_2$ are $xu_1$-paths and $P_3$ is an $xv_1$-path.
\end{sublemma}

\begin{proof}
By the induction hypothesis, there are three edge-disjoint $xu_1$-paths $R_1, R_2, R_3$ in $G_1$.
If $v_1\in V(R_1)\cup V(R_2)\cup V(R_3)$, then we may assume, without loss of generality, that $v_1\in V(R_3)$ and $u_1v_1\notin E(R_1)\cup E(R_2)$.
Hence $R_1$, $R_2$ and the $xv_1$-subpath of $R_3$ are the desired paths.
Now we may assume that
$v_1\notin V(R_1) \cup V(R_2) \cup V(R_3)$.  Let $Q$ be a shortest path from $z_1 \in V(R_1)\cup V(R_2)\cup V(R_3)$ to $v_1$ in $G\ba u_1v_1$ (such a path exists by our connectivity assumption). 
Without loss of generality, $z_1\in V(R_3)$.
Hence the desired paths are $R_1$, $R_2$ and the concatenation of the $xz_1$-subpath of $R_3$ and $Q$.
This proves \cref{claima}.
\end{proof}

By \cref{claima} and symmetry, there are three edge-disjoint paths $Q_1$, $Q_2$ and $Q_3$ in $G_2\ba u_2v_2$ such that $Q_1$ and $Q_2$ are $u_2y$-paths and $Q_3$ is a $v_2y$-path.
The paths obtained by concatenating $P_1$ and $Q_1$; $P_2$ and $Q_2$; and $P_3$, $v_1v_2$ and $Q_3$ are three edge-disjoint $xy$-paths in $G$, so $\lambda_G(x,y)\geq 3$.
\end{proof}

\begin{proof} [Proof of \cref{thm:brooks3gen}]
If a block of $G$ is a \fatras, then this block has chromatic number $4$ by Proposition~\ref{prop:fatras}(i), and thus $\chi(G)\geq 4$.


  Conversely, assume that no block of $G$ is a \fatras.
    We will show that $G$ is $3$-colourable by induction on the number of vertices.
  We may assume that $G$ is $2$-connected (since if each block is $3$-colourable, then it is straightforward to piece these $3$-colourings together to obtain a $3$-colouring of $G$).
 %
 Moreover, if $G$ is $3$-connected, then the result follows from \cref{main} since $G$ is not an odd wheel.
 Henceforth, we assume that $G$ is not $3$-connected.
 
 Let $(A,\{x,y\},B)$ be a $2$-separation of $V(G)$. 
 Let $H_A$ (respectively, $H_B$) be the graph obtained from $G_A=G[A \cup \{x,y\}]$ (respectively, $G_B=G[B \cup \{x,y\}]$) by adding an edge $xy$ if it does not exist.
 Observe that since $G$ is $2$-connected,
 there is at least one $xy$-path in $G_B$, so
 $H_A$ (and, similarly, $H_B$)
  has maximal local edge-connectivity~$3$.

Assume first that neither $H_A$ nor $H_B$ are \fatras es.
 By the induction hypothesis, both $H_A$ and $H_B$ are $3$-colourable.
Thus, by piecing together a $3$-colouring of $H_A$ and a $3$-colouring of $H_B$ in both of which $x$ is coloured $1$ and $y$ is coloured $2$, we obtain a $3$-colouring of $G$.


Henceforth, we may assume that $H_A$ or $H_B$ is a \fatras.
Without loss of generality, we assume that $H_A$ is a \fatras.
Observe first that $xy \notin E(G)$.
Indeed, if $xy\in E(G)$, then $\lambda_{H_A}(x,y)\leq 2$, since there is an $xy$-path in $G_B\ba xy$, as $G$ is $2$-connected.
Hence, by Proposition~\ref{prop:fatras}(ii), $H_A$ is not a \fatras; a contradiction.

Furthermore, Proposition~\ref{prop:fatras}(ii) implies that there are three edge-disjoint $xy$-paths in $H_A$, two of which 
are in $G_A$. 
Now, since $\lambda_G(x,y)\leq 3$, it follows that $\lambda_{G_B}(x,y)\leq 1$.
But $G_B$ is connected, since $G$ is $2$-connected, so there exists an edge $x'y'$ such that $G_B\ba x'y'$ has two components:
one, $G_x$, containing both $x$ and $x'$; and the other, $G_y$, containing $y$ and $y'$.
We now distinguish two cases depending on whether or not $x=x'$ or $y=y'$.

\begin{itemize}

\item Assume first that $x\neq x'$ and $y\neq y'$.
Let $H_x$ (respectively, $H_y$) be the graph obtained from $G_x$ (respectively, $G_y$) by adding the edge $xx'$ (respectively, $yy'$), if it does not exist.
Observe that the concatenation of an $xy$-path in $G_A$, a $yy'$-path in $G_y$, and $y'x'$ is a non-trivial $xx'$-path in $G$ whose internal vertices are not in $V(G_x)$.
Hence $\lambda_{G_x}(x,x')\leq 2$, so
$H_x$ 
has maximal local edge-connectivity~$3$.
Moreover,
$G_x$ is not a \fatras, by \cref{prop:fatras}(ii), and hence $G_x$ is $3$-colourable, by the induction hypothesis.
Let $J$ be the graph obtained from $G-(V(G_x)\setminus \{x\})$ by adding the edge $xy'$.  Since there is an $xx'$-path in $G_x$, the graph $J$ has maximal local edge-connectivity~$3$.
Hence, by the induction hypothesis, either $J$ is $3$-colourable or $J$ is a \fatras.
In both cases, $G-(V(G_x)\setminus \{x\})$ is $3$-colourable, by Proposition~\ref{prop:fatras}(i).

Suppose that $xx'\in E(G)$. 
Then, in every $3$-colouring of $G_x$, the vertices $x$ and $x'$ have different colours. Consequently, one can find a $3$-colouring $c_1$ of $G_x$ and a $3$-colouring $c_2$ of $G-(V(G_x)\setminus \{x\})$ such that $c_1(x)=c_2(x)$ and $c_1(x') \neq c_2(y')$.
The union of these two colourings is a $3$-colouring of $G$.
Similarly, the result holds if $yy'\in E(G)$.

Henceforth, we may assume that $xx'$ and $yy'$ are not edges of $G$.
If both $H_x$ and $H_y$ are \fatras es, then $G$ is also a \fatras, obtained 
by taking the Haj\'os join of $H_A$ and $H_x$ with respect to $(x,y)$ and $(x,x')$, and then the Haj\'os join of the resulting graph and $H_y$ with respect to $(y,x')$ and $(y,y')$.
Hence, we may assume that one of $H_x$ and $H_y$, say $H_x$, is not a \fatras.
Thus, by the induction hypothesis, $H_x$ admits a $3$-colouring $c_1$, which is a $3$-colouring of $G_x$ such that $c_1(x)\neq c_1(x')$.
Since $G-(V(G_x)\setminus \{x\})$ is $3$-colourable, 
one can find a $3$-colouring $c_2$ of $G-(V(G_x)\setminus \{x\})$ such that $c_1(x)=c_2(x)$ and $c_1(x') \neq c_2(y')$.
The union of $c_1$ and $c_2$ is a $3$-colouring of $G$.

\item Assume now that $x=x'$ or $y=y'$. Without loss of generality, $x=x'$.
Let $H_y$ be the graph obtained from $G_y$ by adding the edge $yy'$, if it does not exist.
The graph $H_y$ has maximal local edge-connectivity~$3$.
If $H_y$ is a \fatras, then $G$ is the Haj\'os join of $H_A$ and $H_y$ with respect to $(y,x)$ and $(y,y')$, so $G$ is also a \fatras; a contradiction.
If $H_y$ is not a \fatras, then by the induction hypothesis $H_y$ admits a $3$-colouring $c_2$, which is $3$-colouring of $G_y$ such that $c_2(y)\neq c_2(y')$.
Now $H_A$ is a \fatras, so it is $4$-critical by Proposition~\ref{prop:fatras}(i). Thus $G_A$ admits a $3$-colouring $c_1$ such that $c_1(x)=c_1(y)$.
Without loss of generality, we may assume that $c_1(y)=c_2(y)$. Then the union of $c_1$ and $c_2$ is a $3$-colouring of $G$. \qedhere
\end{itemize}
\end{proof}

\begin{corollary}
  \label{mainalgo}
  Let $G$ be a graph with maximal local edge-connectivity~$3$.
  Then there is a polynomial-time algorithm that finds an optimal colouring for $G$.
\end{corollary}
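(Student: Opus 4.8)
The plan is to make the inductive proof of \cref{thm:brooks3gen} constructive, using the algorithm of \cref{algo} as the base case and a degeneracy argument to cover the case $\chi(G)=4$. Since $G$ has maximal local edge-connectivity~$3$ it lies in $\mathcal{C}_3^3$, so (by Mader's theorem, as recalled in the introduction) $G$ and all its subgraphs have a vertex of degree at most~$3$; hence $G$ is $4$-degenerate and greedily colouring along a degeneracy ordering produces a $4$-colouring in linear time. In particular $\chi(G)\le 4$, and by \cref{prop:fatras}(i) together with \cref{thm:brooks3gen} we have $\chi(G)=4$ precisely when some block of $G$ is a \fatras. So it suffices to decide whether $\chi(G)\le 3$ and, when it is, output a $3$-colouring; combined with a test for emptiness of the edge set ($\chi\le 1$) and a linear-time bipartiteness test ($\chi\le 2$), this yields an optimal colouring.

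First I would compute the block decomposition of $G$ in linear time. A graph is $3$-colourable if and only if each of its blocks is, and block $3$-colourings can be glued along cut vertices by permuting colours while traversing the block tree; so it is enough to $3$-colour each $2$-connected block, or to detect that one of them is a \fatras. For a $2$-connected block $H$ (blocks that are a single vertex or edge being trivial), I would run a recursive procedure that either returns a proper $3$-colouring of $H$ or reports that $H$ is a \fatras.

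The recursion follows the proof of \cref{thm:brooks3gen}. Given a $2$-connected $H$ with maximal local edge-connectivity~$3$: test whether $H$ is $3$-connected (a standard max-flow based check); if so, invoke \cref{algo}, which returns a $3$-colouring unless $H$ is an odd wheel, in which case report ``\fatras''. If $H$ is not $3$-connected, find a $2$-separation $(A,\{x,y\},B)$ (again by a flow computation), form $H_A$ and $H_B$ by adding the edge $xy$, and recurse on both. If both calls return $3$-colourings, permute colours so that $x$ and $y$ receive the same values in both, and glue. Otherwise, say $H_A$ is reported a \fatras; then $xy\notin E(H)$ and $\lambda_{H_B}(x,y)\le 1$, so a linear-time bridge search in $H_B\ba xy$ produces the separating edge $x'y'$; I would then build $G_x$, $H_x$, $J$ (or, when $x=x'$, the graph $H_y$) exactly as in the proof, recurse on each, and assemble a $3$-colouring according to the case analysis there, reporting that $H$ is a \fatras only in the one situation that analysis forces it (namely $xx',yy'\notin E(H)$ with both $H_x$ and $H_y$ \fatras es, or the corresponding degenerate sub-case). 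Every graph appearing in a recursive call has maximal local edge-connectivity~$3$ (this is verified along the way in the proof) and strictly fewer vertices than $H$, since a $2$-separation of a $2$-connected graph has both sides nonempty and the bridge decomposition is proper.

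The main point to check — and the one genuine obstacle — is the polynomial running-time bound. Strict decrease in vertex count gives recursion depth $O(n)$, and every non-recursive operation (block decomposition, $3$-connectivity test, finding a $2$-separation or a bridge, the call to \cref{algo}, gluing colourings) is polynomial in $n$ and $m$; what requires a short argument is that the recursion tree does not blow up. At a $2$-separation the children $H_A,H_B$ satisfy $|V(H_A)|+|V(H_B)| = n+2$ and overlap only in $\{x,y\}$, and in the harder branch the children ($G_x$ or $H_x$, together with $J$, and possibly $H_y$) again overlap in a bounded vertex set while their sizes sum to $n+O(1)$; a charging argument in which each vertex of the current graph pays for $O(1)$ of its copies, iterated over the $O(n)$ levels, bounds the total number of vertices processed — and hence the total work — by a polynomial in $n$ and $m$. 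Optimality of the colouring returned then follows from the dichotomy described in the first paragraph.
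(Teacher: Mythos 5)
Your overall route is the intended one: constructivise the inductive proof of \cref{thm:brooks3gen}, use \cref{algo} on $3$-connected blocks, glue block colourings at cut-vertices, and fall back on a greedy colouring when some block is a \fatras; your accounting for the recursion size (pieces at each split overlap in $O(1)$ vertices and are strictly smaller, so the recursion tree has $O(n)$ nodes) is also fine. One small slip: since every subgraph of a graph in $\mathcal{C}_3^3$ has a vertex of degree at most $3$, the graph is $3$-degenerate (not merely $4$-degenerate), and it is $3$-degeneracy that makes the greedy colouring use at most $4$ colours.

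The genuine gap is in the step ``assemble a $3$-colouring according to the case analysis there.'' In two places that case analysis certifies $3$-colourability only via $4$-criticality of a \fatras\ (\cref{prop:fatras}(i)): when $x=x'$ it needs a $3$-colouring $c_1$ of $G_A=H_A\ba xy$ with $c_1(x)=c_1(y)$, where $H_A$ has been reported a \fatras; and in the other case, when $J$ is a \fatras, it needs a $3$-colouring $c_2$ of $G-(V(G_x)\setminus\{x\})=J\ba xy'$. Your recursion, as specified, returns for $H_A$ and $J$ only the verdict ``\fatras'' and no colouring, so at these points the assembly has nothing to glue. The fix is easy but must be stated: recurse, through the full algorithm (block decomposition included, since these graphs need not be $2$-connected), on the edge-deleted graphs $H_A\ba xy$ and $J\ba xy'$ themselves. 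They are strictly smaller, inherit maximal local edge-connectivity~$3$ as subgraphs of $G$, and are $3$-colourable (by criticality when the parent is a \fatras), so the algorithm returns colourings; moreover, criticality forces the two ends of the deleted edge to receive the same colour in any such colouring, which is exactly the property the gluing uses ($c_1(x)=c_1(y)$ in the first case, and $c_2(x)=c_2(y')$ in the second, after which a permutation achieving $c_2(x)=c_1(x)$ and $c_2(y')\neq c_1(x')$ exists because $c_1(x)\neq c_1(x')$ in the relevant subcases). With that amendment your argument is complete and matches the paper's intended justification of the corollary.
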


\section{Graphs with maximal local connectivity~$k$}
\label{secmlc3}

We now consider the more general class of graphs with maximal local (vertex) connectivity $k$.  
First, we show that for a $3$-connected graph, the notions of maximal local edge-connectivity~$3$ and maximal local connectivity~$3$ are equivalent.

\begin{lemma}
  \label{structurelemma1}
  Let $G$ be a $3$-connected graph with maximal local connectivity~$3$.
  Then $G$ has maximal local edge-connectivity~$3$.
\end{lemma}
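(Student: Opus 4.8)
The plan is to argue by contradiction. Suppose $G$ is $3$-connected with maximal local connectivity~$3$, yet $\lambda(x,y)\ge 4$ for some pair of distinct vertices $x,y$; fix pairwise edge-disjoint $xy$-paths $P_1,P_2,P_3,P_4$. Since $G$ is $3$-connected and has maximal local connectivity~$3$, we have $\kappa(x,y)=3$, so by Menger's theorem there is a small separator through which all, or all but one, of the $P_i$ must pass: if $x\not\sim y$ there is a $3$-vertex cut $Z=\{a,b,c\}$ separating $x$ from $y$, with sides $A\ni x$ and $B\ni y$; if $x\sim y$ there is a $2$-vertex cut $\{a,b\}$ separating $x$ from $y$ in $G\ba xy$, and the three members of $\{P_1,\dots,P_4\}$ other than the edge $xy$ each meet $\{a,b\}$. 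I would treat the non-adjacent case first and then indicate the (easier) modifications for the adjacent case, where the edge $xy$ itself supplies one extra internally disjoint path. Note also that since $|Z|=\kappa(x,y)$ is minimum, each of $a,b,c$ has a neighbour in $A$ and a neighbour in $B$.

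In the non-adjacent case, for each $P_i$ let $\sigma_i$ be the first vertex of $Z$ met when traversing $P_i$ from $x$, and $\tau_i$ the first vertex of $Z$ met when traversing from $y$; the $x$-to-$\sigma_i$ initial segment lies in $G[A\cup Z]$ with interior in $A$, and the $y$-to-$\tau_i$ segment lies in $G[B\cup Z]$ with interior in $B$. As there are four paths and only three vertices in $Z$, pigeonhole forces two indices $i,j$ with $\sigma_i=\sigma_j=:z$, and their initial segments are then two edge-disjoint $xz$-paths with interior in $A$. One must combine this with the remaining structure — the other $P_\ell$'s, the $B$-side subpaths, and, crucially, the three internally disjoint $x$-to-$Z$ and $y$-to-$Z$ fans guaranteed by the Fan Lemma and $3$-connectivity — to exhibit four internally vertex-disjoint paths between a single pair of vertices: $x$ and $y$, or $z$ and $x$, or $z$ and $y$. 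Any of these contradicts $\kappa(u,v)\le 3$ for all $u,v$, which finishes the proof.

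The step I expect to be the real obstacle is this last one: converting edge-disjoint subpaths (all that the $P_i$ directly supply) into internally vertex-disjoint paths. This is precisely where ``local edge-connectivity'' and ``local connectivity'' can genuinely diverge — indeed the analogue of this lemma fails for $k\ge 4$ — so the argument must use that $k=3$, much as the proof of \cref{close} leans on $\lfloor k/2\rfloor$ being small. Concretely, after the pigeonhole step one should perform the standard max-flow rerouting at shared internal vertices of the $P_i$ and then carefully bookkeep the few configurations for how $P_1,\dots,P_4$ distribute over $\{a,b,c\}$ (and over $\{a,b\}$ in the adjacent case). A more uniform alternative would be induction on $|V(G)|$: contract across the $3$-cut by adding a triangle on $Z$ and passing to the side containing $x$, exactly as in the construction of $G_X$ before \cref{close}; there the flow computation readily yields a pair with $\lambda\ge 4$ in the smaller graph, and the real work shifts to verifying that this smaller $3$-connected graph still has maximal local connectivity~$3$.
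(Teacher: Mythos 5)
Your setup matches the paper's opening moves (contradiction, a Menger separator $Z$ between the offending pair, a pigeonhole over the four edge-disjoint paths, separate treatment of the adjacent case via a $2$-cut in $G\ba xy$), but the proof stops exactly where the real content lies, and you say so yourself: ``the step I expect to be the real obstacle is this last one.'' Producing four internally vertex-disjoint paths between a single pair is not a matter of ``standard max-flow rerouting'' plus bookkeeping of configurations, and the ingredients you list are not enough. In particular, the Fan Lemma only gives three paths from $x$ (or $y$) to $Z$, since $|Z|=3$, so it can never supply the fourth path by itself; and an arbitrary Menger cut $Z$ will not do, because a small cut strictly inside the $x$-side can block any attempt to build a richer fan there. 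The paper's proof hinges on two ideas that are absent from your outline: (i) the $3$-separation $(X,S,Y)$ is chosen with the side $X$ containing $x$ \emph{inclusion-wise minimal}, and (ii) the pigeonhole is applied to the \emph{last} vertex of $X\cup S$ on each path (not the first vertex of $Z$ from $x$), which yields a vertex $v_1\in S$ with at least two neighbours in $Y$. With these in hand, one splits $v_1$ into two copies inside $G[X\cup S]$ and runs a max-flow min-cut argument: any separator of size at most three between $x$ and $\{v_1,v_1',v_2,v_3\}$ would produce a $3$-separation with a smaller $x$-side, contradicting minimality; this gives two $xv_1$-paths and one $xv_2$-, one $xv_3$-path meeting only at $x$. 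On the other side, a cut-vertex argument in $G[S\cup Y]$ (using the second neighbour $q$ of $v_1$ in $Y$ and $3$-connectivity of $G$) gives internally disjoint $v_1v_2$- and $v_1v_3$-paths. Gluing these yields four internally disjoint $xv_1$-paths — note the contradiction is at the pair $(x,v_1)$, not at $(x,y)$ — and the four original edge-disjoint paths are never ``converted'' at all; they are used only to locate $v_1$. Your pigeonhole, by contrast, produces two edge-disjoint $xz$-paths in the $x$-side, which is precisely the kind of object one cannot upgrade to vertex-disjoint paths in general, so your plan re-creates the difficulty rather than circumventing it.

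Your alternative sketch (induction after capping the $3$-cut $Z$ with a triangle and keeping the $x$-side) is also not a proof as stated: both claims you defer — that the smaller graph still has maximal local connectivity~$3$, and that it still contains a pair with $\lambda\ge 4$ — are exactly the hard points, and neither is obvious (the vertex $y$ need not survive into the smaller graph, so even identifying the offending pair there requires an argument). If you want to salvage your first approach, import the two missing ideas above: choose the separation with minimal $x$-side, redo the pigeonhole so that it gives $v_1$ two neighbours in $Y$, and aim the final contradiction at $\kappa(x,v_1)$ rather than at $\kappa(x,y)$; the adjacent case then goes through with the analogous $2$-cut in $G\ba xy$, as in the paper's proof of \cref{structurelemma1}.
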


\begin{proof}
  Consider two vertices $x$ and $y$ with four edge-disjoint paths between them.
  We will show that there is a pair of vertices with four internally disjoint paths between them, contradicting that $G$ has maximal local connectivity~$3$.
  First we assume that $x$ and $y$ are not adjacent.
  Let $(X,S,Y)$ be a $3$-separation with $x\in X$ and $y\in Y$ such
  that $X$ is inclusion-wise minimal. Let $S=\{v_1,v_2,v_3\}$; note that
  $3$-connectivity implies that every vertex in $S$ has a neighbour both
  in $X$ and $Y$. Each of the four paths has, when going from $x$ to
  $y$,  a last vertex in $X\cup S$. This vertex has to be in $S$, so we
  can assume, without loss of generality, that $v_1$ is the last
  such vertex of at least two of the four edge-disjoint paths. This
  means that $v_1$ has at least two neighbours in $Y$.

  We will show that there are four internally vertex-disjoint paths in
  $G[X\cup S]$: two $xv_1$-paths, an $xv_2$-path and an $xv_3$-path.
  Let $G'$ be the graph obtained from $G[X\cup S]$ by introducing a
  new vertex $v'_1$ that is adjacent to every neighbour of $v_1$ in $X\cup S$.
  If $G'$ contains four paths connecting $x$ and
  $S':=\{v_1,v'_1,v_2,v_3\}$ that meet only in $x$, then the required
  four paths exist in $G[X\cup S]$. If there are no four such paths in
  $G'$, then a max-flow min-cut argument (with $x$ having infinite
capacity and every other vertex having unit capacity) shows that
  there is a set $S^*$ of at most three vertices, with $x\not\in S^*$, that
  separate $x$ and $S'$. It is not possible that $S^*\subset S'$:
  then every vertex in the non-empty set $S'\setminus S^*$ remains
  reachable from $x$ (using that every vertex of $S'$ has a neighbour
  in $X$). Therefore, $S^*$ has at least one vertex in $X$ and hence
the set of vertices reachable from $x$ in $G' - S^*$ is a
  proper subset of $X$. It follows that $S^*$ implies the existence of
  a $3$-separation contradicting the minimality of $X$.

  Next we prove that there are internally disjoint $v_1v_2$- and $v_1v_3$-paths in $G[S\cup Y]$.
  Recall that
  $v_1$ has two neighbours in $Y$.
  Suppose, towards a contradiction, that given any $v_1v_2$-path and $v_1v_3$-path in $G[S\cup Y]$, these paths are not internally disjoint.
  Then, in $G[S \cup Y]$, there is a cut-vertex $w$ that separates $v_1$ and $\{v_2,v_3\}$.
  Since $v_1$ has two neighbours in $Y$, there is a vertex $q \in Y$ that is adjacent to $v_1$ and distinct from $w$.  As $w$ is a cut-vertex in $G[S \cup Y]$, every $qv_2$- or $qv_3$-path passes through $w$.  Hence $\{w,v_1\}$ separates $q$ from 
  $x$
  in $G$, contradicting $3$-connectivity.

  Now there are internally disjoint $xv_1$-, $xv_1$-, $xv_2$-, and $xv_3$-paths in $X$ and internally disjoint $v_1v_2$- and $v_1v_3$-paths in $Y$.
  Thus, as shown \cref{fig:structureproof}, there are four internally disjoint $xv_1$-paths, contradicting the fact that the local connectivity $\kappa(x,v_1)$ is at most $3$.

  \begin{figure}
    \begin{subfigure}{0.48\textwidth}
      \centering
      \begin{tikzpicture}[scale=1]
        \tikzset{VertexStyle/.append style = {draw,minimum height=7,minimum width=7,fill=black}}
        \Vertex[x=-3,y=1,LabelOut=true,L=$x$,Lpos=180]{x}
        \Vertex[x=0,y=0,LabelOut=true,L=$v_3$,Lpos=-90]{z3}
        \Vertex[x=0,y=1,LabelOut=true,L=$v_2$,Lpos=-90]{z2}
        \Vertex[x=0,y=2,LabelOut=true,L=$v_1$,Lpos=90]{z1}

        \tikzset{EdgeStyle/.append style = {decorate,decoration={snake,amplitude=.3mm}}}
        \tikzset{EdgeStyle/.append style = {bend right=10}}
        \Edge(x)(z2)
        \Edge(x)(z3)
        \Edge(x)(z1)
        \tikzset{EdgeStyle/.append style = {bend left=10}}
        \Edge(x)(z1)
        \tikzset{EdgeStyle/.append style = {bend left=60}}
        \Edge(z1)(z2)
        \tikzset{EdgeStyle/.append style = {bend left=90}}
        \Edge(z1)(z3)

      \end{tikzpicture}
    \end{subfigure}
    \begin{subfigure}{0.48\textwidth}
      \centering
      \begin{tikzpicture}[scale=1]
        \tikzset{VertexStyle/.append style = {draw,minimum height=7,minimum width=7,fill=black}}
        \Vertex[x=-2,y=1.5,LabelOut=true,L=$x$,Lpos=180]{x}
        \Vertex[x=0,y=0,LabelOut=true,L=$v_2$,Lpos=-90]{z2}
        \Vertex[x=0,y=1,LabelOut=true,L=$v_1$,Lpos=90]{z1}
        \Vertex[x=2,y=1.5,LabelOut=true,L=$y$]{y}

        \tikzset{EdgeStyle/.append style = {bend left=15}}
        \Edge(x)(y)
        \tikzset{EdgeStyle/.append style = {decorate,decoration={snake,amplitude=.3mm},bend left=0}}
        \Edge(z1)(y)
        \tikzset{EdgeStyle/.append style = {bend right=10}}
        \Edge(x)(z1)
        \Edge(x)(z2)
        \tikzset{EdgeStyle/.append style = {bend left=10}}
        \Edge(x)(z1)
        \tikzset{EdgeStyle/.append style = {bend left=60}}
        \Edge(z1)(z2)

      \end{tikzpicture}
    \end{subfigure}
    \caption{The four internally disjoint $xv_1$-paths obtained in the proof of \cref{structurelemma1}, when $x$ and $y$ are non-adjacent (left) or adjacent (right). Wiggly lines represent internally disjoint paths.}
    \label{fig:structureproof}
  \end{figure}
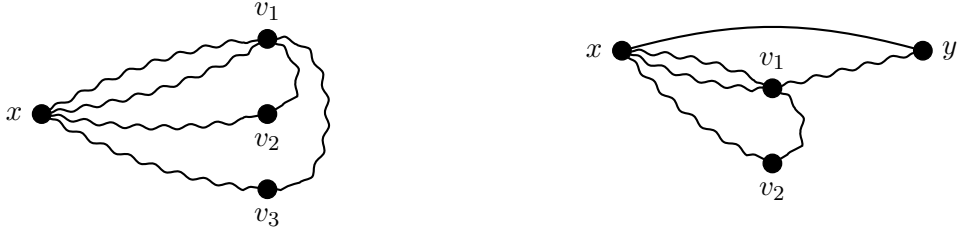

  A similar argument applies when $x$ and $y$ are adjacent.
  In this case, $G \ba xy$ has a $2$-vertex cut.  Let $(X,S,Y)$ be a $2$-separation of $G \ba xy$ with $x \in X$ and $y \in Y$ such that $X$ is inclusion-wise minimal, and let $S=\{v_1,v_2\}$.
  Since $G \ba xy$ is $2$-connected, $v_1$ and $v_2$ each have 
  a neighbour in $X$ and a neighbour in $Y$.
  Each of the three $xy$-paths in $G \ba xy$ has a last vertex in $S$, so we may assume, without loss of generality, that $v_1$ is the last vertex of at least two of the three, and hence $v_1$ has at least two neighbours in $Y$.
  Let $G'$ be the graph obtained from $G[X \cup S]$ by introducing a new vertex $v_1'$ that is adjacent to every neighbour of $v_1$ in $X \cup S$, and let $S'= \{v_1,v_1',v_2\}$.
  If $G'$ does not contain three paths from $x$ to $S'$ that meet only in $x$, then, by a max-flow min-cut argument as in the case where $x$ and $y$ are not adjacent, we deduce there is a set $S^*$ of at most two vertices that separate $x$ and $S'$.  Since $S^* \not\subset S'$, this contradicts the minimality of $X$.

  It remains to prove that there are internally disjoint $v_1y$- and $v_1v_2$-paths in $G[Y\cup S]$.
  Suppose not.  Then, in $G[Y\cup S]$, there is a cut-vertex $w$ that separates $v_1$ and $\{v_2,y\}$.
  Since $v_1$ has at least two neighbours in $Y$, one of these neighbours $q$ is distinct from $w$.
  As every $qv_2$- or $qy$-path in $G[Y \cup S]$ passes through $w$, it follows that
  $\{w,v_1\}$ separates $q$ from 
  $x$
  in $G$, contradicting $3$-connectivity.
  This completes the proof of \cref{structurelemma1}.
\end{proof}

At this juncture, we observe that the proof of \cref{structurelemma1} relies on properties specific to $3$-connected graphs with local connectivity~$3$.
For $k\geq 4$, a $k$-connected graph with maximal local connectivity~$k$ might not have maximal local edge-connectivity~$k$;
an example is given in \cref{fig:noedge5lc}.
In particular, in the proof of \cref{structurelemma1}, 
the argument that there are internally disjoint $v_1v_2$- and $v_1v_3$-paths in $G[S \cup Y]$ would not extend to the existence of a $v_1v_4$-path, as $v_1$ might not even have more than two neighbors in $Y$.

\begin{figure}
  \centering
  \begin{tikzpicture}[scale=1]
    \tikzset{VertexStyle/.append style = {draw,minimum height=7,minimum width=7,fill=black}}
    \SetVertexNoLabel
    \Vertex[x=-3,y=1.5]{x}
    \Vertex[x=-1.5,y=2.25]{q}

    \Vertex[x=0,y=0]{z1}
    \Vertex[x=0,y=1]{z2}
    \Vertex[x=0,y=2]{z3}
    \Vertex[x=0,y=3]{z4}

    \Vertex[x=1.5,y=2.25]{p}
    \Vertex[x=3,y=1.5]{y}

    \Edge(x)(q)
    \Edge(x)(z1)
    \Edge(x)(z2)
    \Edge(x)(z3)
    \Edge(z1)(q)
    \Edge(z2)(q)
    \Edge(z4)(q)
    \Edge(z2)(z3)
    \Edge(z1)(p)
    \Edge(z3)(p)
    \Edge(z4)(p)
    \Edge(y)(p)
    \Edge(y)(z1)
    \Edge(y)(z2)
    \Edge(y)(z3)

    \tikzset{EdgeStyle/.append style = {bend left}}
    \Edge(x)(z4)
    \Edge(z4)(y)
  \end{tikzpicture}
  \caption{A $4$-connected graph with maximal local connectivity~$4$, but maximal local edge-connectivity~$5$.}
  \label{fig:noedge5lc}
\end{figure}
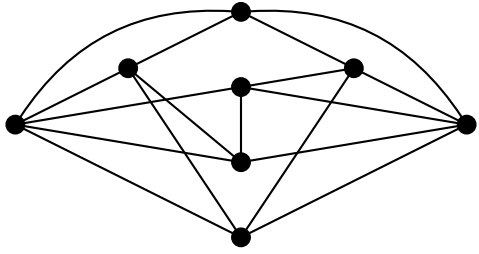

\Cref{colour_nl4c} now
follows immediately from \cref{main,algo,structurelemma1}.
%
%
One might hope to generalise this result to all graphs with maximal local connectivity~$3$, for a result analogous to \cref{thm:brooks3gen}. 
But this hope will not be realised, unless P=NP, since deciding if a $2$-connected graph with maximal local connectivity~$3$ is $3$-colourable is NP-complete.
%
%
We 
prove this
using a reduction from the unrestricted version of \textsc{$3$-colourability}.
Given an instance of this problem,
we replace each vertex of degree at least four with a gadget that ensures that the resulting graph has maximal local connectivity~$3$.
Shortly, we describe this gadget; first, we require some definitions.

We call the graph obtained from two copies of a diamond, by identifying a pick vertex
from each, a \emph{serial diamond pair} and denote it $D_2$.
We call the two degree-$2$ vertices of $D_2$ the \emph{ends}. 
A tree is \emph{cubic} if all vertices have either degree one or degree three.  A degree-$1$ vertex is a \emph{leaf}; and an edge that is incident to a leaf is a \emph{pendant} edge, whereas an edge that is incident to two degree-$3$ vertices is an \emph{internal} edge.

For $l \geq 4$, let $T$ be a cubic tree with $l$ leaves. 
For each pendant edge $xy$, we remove $xy$, take a copy of a diamond $D$ and identify, firstly, the vertex $x$ with one pick vertex of $D$, and, secondly, $y$ with the other pick vertex of $D$.
For each internal edge $xy$, we remove $xy$, take a copy of $D_2$ and identify, firstly, the vertex $x$ with one end of $D_2$, and, secondly, $y$ with the other end of $D_2$.
A degree-$2$ vertex in the resulting graph $T'$ corresponds to a leaf of $T$; we call such a vertex an \emph{outlet}.
We also call $T'$ a \emph{hub gadget} with $l$ outlets. 
Observe that for any integer $l \ge 4$, there exists a hub gadget with exactly $l$ outlets. 
When $T'$ is used to replace a vertex $h$, we say $T'$ is the \emph{hub gadget of $h$}.
An example of a hub gadget with four outlets is shown in \cref{outlets-fig}.

\begin{figure}
  \centering
  \begin{tikzpicture}[scale=1.2]
    \tikzset{VertexStyle/.append style = {draw,minimum height=7,minimum width=7}}
    \Vertex[x=-1.875,y=0.9,LabelOut=true,L=$p_1$,Lpos=180]{a1}
    \Vertex[x=-1.875,y=-0.9,LabelOut=true,L=$p_2$,Lpos=180]{a2}
    \Vertex[x=1.875,y=0.9,LabelOut=true,L=$p_3$]{b3}
    \Vertex[x=1.875,y=-0.9,LabelOut=true,L=$p_4$]{b4}
    \SetVertexNoLabel
    \tikzset{VertexStyle/.append style = {fill=black}}
    \Vertex[x=-1.25,y=0]{a}
    \Vertex[x=-1.25,y=0.6]{aa1}
    \Vertex[x=-1.875,y=0.3]{aa2}

    \Vertex[x=-1.25,y=-0.6]{aa5}
    \Vertex[x=-1.875,y=-0.3]{aa6}

    \Vertex[x=1.25,y=0.6]{bb1}
    \Vertex[x=1.875,y=0.3]{bb2}

    \Vertex[x=1.25,y=-0.6]{bb5}
    \Vertex[x=1.875,y=-0.3]{bb6}

    \Vertex[x=-0.625,y=0.3]{dab1}
    \Vertex[x=-0.625,y=-0.3]{dab2}
    \Vertex[x=0.625,y=0.3]{dab3}
    \Vertex[x=0.625,y=-0.3]{dab4}
    \Vertex[x=0,y=0]{ab}
    \Vertex[x=1.25,y=0]{b}

    \Edge(a)(aa1)
    \Edge(a)(aa2)
    \Edge(aa1)(aa2)
    \Edge(a1)(aa2)
    \Edge(a1)(aa1)

    \Edge(a)(aa5)
    \Edge(a)(aa6)
    \Edge(aa5)(aa6)
    \Edge(a2)(aa6)
    \Edge(a2)(aa5)

    \Edge(a)(dab1)
    \Edge(a)(dab2)
    \Edge(dab1)(dab2)
    \Edge(ab)(dab1)
    \Edge(ab)(dab2)
    \Edge(ab)(dab3)
    \Edge(ab)(dab4)
    \Edge(dab3)(dab4)
    \Edge(b)(dab3)
    \Edge(b)(dab4)

    \Edge(b)(bb1)
    \Edge(b)(bb2)
    \Edge(bb1)(bb2)
    \Edge(b3)(bb2)
    \Edge(b3)(bb1)

    \Edge(b)(bb5)
    \Edge(b)(bb6)
    \Edge(bb5)(bb6)
    \Edge(b4)(bb6)
    \Edge(b4)(bb5)
  \end{tikzpicture}
  \caption{A hub gadget with four outlets $p_1$, $p_2$, $p_3$ and $p_4$.}
  \label{outlets-fig}
\end{figure}
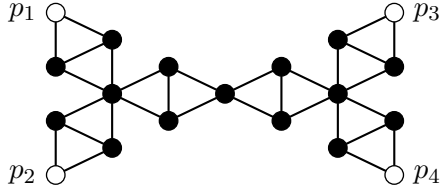

\begin{proposition}
  \label{mlcnpc3}
  The problem of deciding if a $2$-connected graph with maximal local connectivity~$3$ is $3$-colourable is NP-complete.
\end{proposition}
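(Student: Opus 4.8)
The plan is to establish NP-completeness by polynomial reduction from the general (unrestricted) 3-colourability problem, which is classical. Membership in NP is immediate, since a proper 3-colouring is a polynomial-size certificate checkable in polynomial time, and testing whether a graph is 2-connected with maximal local connectivity~3 can be done in polynomial time (by max-flow computations between pairs of vertices). So the substance is the reduction.

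Given an arbitrary graph $G$ as input to \textsc{3-colourability}, I would build a graph $G'$ as follows. For each vertex $h$ of $G$ with $d_G(h) \geq 4$, replace $h$ by a hub gadget $T'$ with exactly $d_G(h)$ outlets (such a gadget exists for every $l \geq 4$, as noted before the statement), and attach the $d_G(h)$ edges formerly incident with $h$ to the $d_G(h)$ distinct outlets of $T'$, one per outlet. Vertices of degree at most~$3$ are left untouched. The key combinatorial facts I need about the hub gadget are: (a)~in any proper 3-colouring of $T'$, all the outlets receive the \emph{same} colour — this is what makes the gadget faithfully simulate a single vertex $h$; and (b)~conversely, for any assignment of a single common colour to the outlets, that colouring extends to a proper 3-colouring of $T'$. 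Fact (a) should follow by an easy induction along the cubic tree $T$: in a diamond, the two pick vertices are forced to the same colour in any 3-colouring (the two middle vertices of the diamond get the other two colours), and in a serial diamond pair $D_2$ the two ends likewise get the same colour; propagating this along all edges of $T$ forces all leaves/outlets to agree. Fact (b) is a matter of exhibiting an explicit extension, again diamond by diamond. Granting (a) and (b), a colouring of $G$ lifts to one of $G'$ (colour each outlet of the hub gadget of $h$ by $\phi(h)$ and extend inside each gadget), and conversely a colouring of $G'$ restricts to one of $G$ (read off the common outlet colour of each hub gadget as the colour of $h$, and keep the colours of untouched low-degree vertices). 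Hence $G$ is 3-colourable iff $G'$ is, and $G'$ has polynomial size.

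It then remains to check that $G'$ actually lies in the restricted class: $G'$ is 2-connected and has maximal local connectivity~3. For 2-connectedness: each diamond and each serial diamond pair is 2-connected, they are glued at single vertices in a tree-like fashion inside each hub gadget, and the outlets are degree-2 vertices through which the inter-gadget edges pass — one should argue that no single vertex of $G'$ is a cut-vertex, reducing essentially to 2-connectedness of $G$ after suppressing the gadgets (if $G$ is not 2-connected we can preprocess, e.g.\ add a dummy apex or handle blocks separately, or simply note the reduction still works from a 2-connected-instance version of 3-colourability). For maximal local connectivity~3, the crucial point is that every vertex of $G'$ that could carry high connectivity has been ``diluted'': the only vertices of degree $>3$ are internal hub-gadget vertices, and between any two vertices one cannot route four internally disjoint paths, because every path leaving or entering a hub gadget must pass through one of the degree-2 outlets, and more fundamentally through a pick vertex of a diamond, which is a bottleneck of size at most~3; I would make this precise by showing that for any two vertices $x,y$ there is a 3-vertex cut separating them (for instance, the two middle vertices and one pick vertex of an appropriately chosen diamond, or the $\{v_1,v_2,v_3\}$-type cut already present in $G$ for original edges).

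The main obstacle I anticipate is verifying the ``maximal local connectivity~3'' property rigorously: one must show $\kappa_{G'}(x,y)\le 3$ for \emph{all} pairs $x,y$, including pairs inside the intricate hub-gadget structure and pairs straddling several gadgets, and the naive bottleneck argument needs care where diamonds are glued (a pick vertex shared between consecutive diamonds in $D_2$, or the branch vertices of the cubic tree where three diamond-chains meet have degree~3 and must be shown not to support a fourth disjoint path). The diamond's degree-2 pick vertices and the degree-2 outlets are the structural feature that saves us, and I would organise the argument as: (i)~observe that contracting each diamond/$D_2$ to an edge between its pick vertices/ends recovers the original tree $T$ for each gadget, so internal hub-gadget connectivity is governed by small cuts; (ii)~for a pair with at least one endpoint outside all gadgets or on an outlet, exhibit a size-$\le 3$ separator explicitly; (iii)~combine. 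Everything else — NP membership, the size bound, the colouring equivalence via facts (a),(b) — is routine.
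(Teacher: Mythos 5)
Your proposal is correct and follows essentially the same route as the paper: reduce from \textsc{$3$-colourability} on $2$-connected instances, replace each vertex of degree at least $4$ by a hub gadget whose outlets are forced to share a colour (via the pick vertices of the diamonds), and bound the local connectivity by the pick-vertex bottlenecks. The paper's verification of maximal local connectivity is slightly lighter than your plan: the bound $\kappa(x,y)\le\min\{d(x),d(y)\}$ disposes of every pair with an endpoint of degree at most $3$ (so no explicit $3$-vertex separator, which in any case cannot exist for adjacent pairs, is needed there), leaving only pairs of internal gadget vertices of degree more than $3$, which are handled exactly by your pick-vertex cut argument.
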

\begin{proof}
  Let $G$ be an instance of \textsc{$3$-colourability}.
  We may assume that $G$ is $2$-connected.
  For each $v \in V(G)$ such that $d(v) \geq 4$,
  we replace $v$ with a hub gadget with outlets $p_1,p_2,\dotsc,p_{d(v)}$, such that
  each neighbour $n_i$ of $v$ in $G$ is adjacent to $p_i$, for $i \in \seq{d(v)}$.
  Thus each outlet has degree three in the resulting graph $G'$.

  It is clear that $G'$ is $2$-connected.
  Now we show that $G'$ has maximal local connectivity~$3$.
  Clearly $\lc(x,y) \leq 3$ if $d(x) \leq 3$ or $d(y) \leq 3$.
  Suppose $d(x),d(y) \geq 4$. Then $x$ and $y$ belong to a hub gadget and are not outlets.  So $x$ belongs to either two or three diamonds, each with a pick vertex distinct from $x$. Let $P$ be the set of these pick vertices. When $y \notin P$, an $xy$-path must pass through some $p \in P$, so $\lc(x,y) \leq 3$ as required.  Otherwise, $x$ and $y$ are pick vertices of a diamond $D$, and there are two internally vertex disjoint $xy$-paths in $D$.  But $D$ is contained in a serial diamond pair $D_2$, and all other $xy$-paths must pass through the end of $D_2$ distinct from $x$ and $y$.  So $\lc(x,y) \leq 3$, as required.

  Suppose $G$ is $3$-colourable and let $\phi$ be a $3$-colouring of $G$.  
  We show that $G'$ is $3$-colourable.  
  Start by colouring each vertex $v$ in $V(G)\cap V(G')$ the colour $\phi(v)$. 
  For each hub gadget $H$ of $G'$ corresponding to a vertex $h$ of $G$, colour every pick vertex of a diamond in $H$ the colour $\phi(h)$.  Clearly, each outlet is given a different colour to its neighbours in $V(G)$ since $\phi$ is a $3$-colouring of $G$.
  The remaining two vertices of each diamond contained in $H$ have two neighbours the same colour $\phi(h)$, so can be coloured using the other two available colours.  Thus $G'$ is $3$-colourable.

  Now suppose that $G'$ is $3$-colourable. Each pick vertex of a diamond must have the same colour in a $3$-colouring of $G'$, so all outlets of a hub gadget have the same colour.  Let $H$ be the hub gadget of $h$, where $h \in V(G)$.  We colour $h$ with the colour of all the outlets of $H$ in the $3$-colouring of $G'$.  For each vertex $v \in V(G) \cap V(G')$, we colour $v$ with the same colour as in the $3$-colouring of $G'$, thus obtaining a $3$-colouring of $G$.
\end{proof}

A similar approach can be used to show that \textsc{$3$-colourability} remains NP-complete for $(k-1)$-connected graphs with maximal local edge-connectivity~$k$, for any $k \geq 4$. 
To prove this, we first require the following lemma:

\begin{lemma}
  \label{conn-lemma}
  Let $k \geq 3$ and $j \geq 1$.
  Then
  \textsc{$k$-colourability} remains NP-complete when restricted to $j$-connected graphs.
\end{lemma}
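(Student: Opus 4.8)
The plan is to prove \cref{conn-lemma} by induction on $j$, with base case $j=1$ being essentially the known NP-completeness of \textsc{$k$-colourability} (which is NP-complete on connected graphs, since one may take the disjoint union of a graph with $K_k$ if necessary, or simply observe connected components can be handled separately). For the inductive step, I would show how to take a $(j-1)$-connected instance and transform it, in polynomial time, into an equivalent $j$-connected instance. Equivalently, and more cleanly, I would give a single direct reduction from \textsc{$k$-colourability} on arbitrary graphs to \textsc{$k$-colourability} on $j$-connected graphs.

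The key idea is a \emph{join-with-a-clique} construction. Given a graph $G$, form $G^+$ by adding $j-1$ new vertices $w_1,\dots,w_{j-1}$, making them pairwise adjacent, and making each $w_i$ adjacent to every vertex of $G$. Then $G^+$ is $j$-connected: any set of fewer than $j$ vertices either leaves some $w_i$ present (in which case that $w_i$ dominates the rest, so $G^+$ minus the cut is connected), or removes all $j-1$ of the $w_i$'s together with fewer than one vertex of $G$, i.e.\ removes exactly $\{w_1,\dots,w_{j-1}\}$, leaving $G$ — but this does not work if $G$ is disconnected. To fix the disconnected case, I would first replace $G$ by a connected graph $G'$ that is $k$-colourable iff $G$ is: for instance, pick one vertex in each component and identify them all into a single vertex, or add a path/clique linking the components in a colour-neutral way; the simplest is to add one universal-to-nothing structure — actually the cleanest fix is to note we may assume $G$ is connected since \textsc{$k$-colourability} restricted to connected graphs is already NP-complete (color components independently). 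With $G$ connected, $G^+$ is genuinely $j$-connected.

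It remains to check that $G^+$ is $k$-colourable iff $G$ is $(k-(j-1))$-colourable — which is the wrong parameter. To keep $k$ fixed, instead of a clique of size $j-1$ joined completely, I would use a sparser connectivity-boosting gadget: attach to $G$ a gadget $H$ on a modest number of vertices that is itself $j$-connected, is $k$-colourable, and is attached to $G$ via $j$ carefully chosen vertices in such a way that (a) the combined graph is $j$-connected, and (b) the attachment vertices can always be recoloured to avoid conflicts, so that $k$-colourability is preserved in both directions. Concretely, take $H$ to be, say, a large $j$-connected $k$-colourable graph (e.g.\ a blow-up of a cycle, or $K_{j+1}$ minus a perfect matching when $k$ permits), choose $j$ vertices of $G$ (after, if needed, subdividing or padding to ensure $G$ has at least $j$ vertices and is connected), and add edges from these to an independent set of $H$ of size $j$ lying in a single colour class. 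Then in any $k$-colouring of $G$ we can permute colours of $H$ freely since its attachment set is monochromatic there, and conversely restricting a $k$-colouring of the combined graph to $G$ works directly. The main obstacle is getting the connectivity bookkeeping exactly right — verifying that no small vertex cut of the combined graph exists — since the attachment creates new low-degree-ish vertices on the $G$ side; I expect to handle this by choosing the attachment vertices of $G$ to be spread out (or by first boosting $G$'s connectivity to $j-1$ via induction, then adding just a single well-placed extra vertex/gadget to reach $j$, which makes the cut analysis a one-step increment and avoids re-deriving everything from scratch).

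<br>

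I would present the inductive version as the main argument: assume \textsc{$k$-colourability} is NP-complete on $(j-1)$-connected graphs; given such a graph $G$, add a new vertex $v$ adjacent to every vertex of $G$ together with a small $k$-colourable pendant structure attached to $v$ to absorb $v$'s forced colour, obtaining a $j$-connected graph whose $k$-colourability is equivalent to that of $G$ (since $v$ can take any of the $k\ge 3$ colours not used on... — one must be careful here, as $v$ is adjacent to all of $G$, so this again drops a colour). Because of this colour-dropping issue, the honest route is the gadget route described above rather than the naive universal vertex; so in the write-up I would commit to: (1) reduce to connected instances; (2) fix a fixed $j$-connected $k$-colourable gadget $H_{j,k}$ with an independent "port set" of size $j$ inside one colour class; (3) attach $G$ to $H_{j,k}$ along $j$ ports; (4) verify $j$-connectivity of the result using that $H_{j,k}$ is $j$-connected and the $j$ port-edges form a connecting "fan"; (5) verify colour-equivalence using that the port set of $H_{j,k}$ is monochromatic, hence its colour is a free parameter that can be matched against the colours $G$ forces on its side. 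The hard part is step (4), the connectivity verification of the glued graph; everything else is routine.
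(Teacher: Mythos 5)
Your reduction to connected instances is fine, but the core of your plan---attaching one fixed $j$-connected, $k$-colourable gadget $H$ to $G$ along $j$ port edges---does not and cannot make the glued graph $j$-connected, so step (4) is not merely ``hard bookkeeping'': it fails. Connectivity is a global property, and a single local attachment leaves untouched every small separation of $G$ whose two sides are not both hit by your $j$ chosen attachment vertices. Concretely, if $(X,Z,Y)$ is a cut of $G$ with $|Z|<j$ and all $j$ attachment vertices lie in $X\cup Z$, then $Z$ still separates $Y$ from the rest in the glued graph; likewise any vertex of $G$ of degree less than $j$ that is not an attachment vertex keeps its small neighbourhood cut. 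Your fallback---first boost $G$ to $(j-1)$-connectivity by induction, then add ``a single well-placed extra vertex/gadget''---has the same problem: a $(j-1)$-connected graph can have many $(j-1)$-vertex cuts spread all over it, and one gadget attached at $j$ fixed vertices lies entirely on one side of most of them, so no single attachment destroys them all. There is also a secondary gap in step (5): if $j\ge k$, the $j$ attachment vertices of $G$ may receive all $k$ colours, and then no single colour is available for a monochromatic port set, so the colour-equivalence argument as described is not guaranteed either.

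The paper's proof supplies exactly the missing idea: destroy the small cuts one at a time rather than with one global gadget. For fixed $j$ there are only polynomially many $j$-vertex cuts; given a $j$-connected graph $G_0$ with a $j$-vertex cut $S_0$, pick $s\in S_0$ and add a ``false twin'' $s'$ with the same neighbourhood as $s$ (and not adjacent to $s$). This preserves $k$-colourability in both directions trivially ($s'$ can copy the colour of $s$, and $G_0$ is an induced subgraph), it destroys $S_0$ as a cut, and every small cut of the new graph projects back to one of $G_0$, so the number of $j$-vertex cuts strictly decreases. Iterating polynomially many times yields a $(j+1)$-connected graph that is $k$-colourable if and only if $G_0$ is, and induction on $j$ finishes the argument. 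If you want to salvage your write-up, you should replace your single-gadget step with a per-cut surgery of this kind (or some other operation applied across all small cuts), since that is the step your proposal is missing.
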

\begin{proof}
  We show that \textsc{$k$-colourability} restricted to $j$-connected graphs is reducible to \textsc{$k$-colourability} restricted to $(j+1)$-connected graphs, for any fixed $j \geq 1$.
  Let $G_0$ be a $j$-connected graph; we construct a $(j+1)$-connected graph $G'$ such that $G_0$ is $k$-colourable if and only if $G'$ is.
  Let $S_0$ be a $j$-vertex cut in $G_0$, let $s \in S_0$, and let $G_1$ be the graph obtained from $G_0$ by introducing a single vertex $s'$ with the same neighbourhood as $s$.
  Now if $S'$ is a $j'$-vertex cut in $G_1$, for $j' \leq j$, then $S'$, or $S' \setminus \{s'\}$, is a $j'$-vertex cut, or $(j'-1)$-vertex cut, in $G_0$.
  Since $S_0$ is not a $j$-vertex cut in $G_1$, it follows that
  $G_1$ has strictly fewer $j$-vertex cuts than $G_0$.
  Repeat this process for each $j$-vertex cut $S_i$ in $G_i$ (there are polynomially many), and let $G'$ be the resulting graph.
  Then $G'$ has no vertex cuts of size at most $j$, so $G'$ is $(j+1)$-connected.  Moreover,
it is straightforward to verify that $G'$ is $k$-colourable if and only if $G_0$ is $k$-colourable.
\end{proof}

  We 
  perform a reduction from \textsc{$k$-colourability} restricted to $(k-1)$-connected graphs (which is NP-complete by \cref{conn-lemma}).
  Let $G$ be a $(k-1)$-connected graph.
%
  For each vertex $v$ with $d(v) \geq k+1$, we will ``replace'' it with a gadget in such a way that the resulting graph $G'$ remains $(k-1)$-connected, $G'$ is $3$-colourable if and only if $G$ is $3$-colourable, and no vertex of $G'$ has degree greater than $k$.

  We will describe, momentarily, a gadget $G_{l,k}$ used to replace a vertex $v$ of degree $l$, where $l > k$, with
  vertices $x_1,x_2,\dotsc,x_l \in V(G_{l,k})$ called the \emph{outlets} of $G_{l,k}$.
  Let $G_{l,k}$ be a gadget, and let $G$ be a graph with a vertex $v$ of degree $l > k$.
  We say that we \emph{attach $G_{l,k}$ to $G$ at $v$} when we perform the following operation: relabel the vertices of $G$ such that
  $V(G) \cap V(G_{l,k}) = N_G(v) = \{x_1,x_2,\dotsc,x_l\}$,
  and construct the graph $(G \cup G_{l,k}) - \{v\}$.

  We now give a recursive description of $G_{l,k}$.
  First, suppose that $l \leq (k-2)(k-1)$.
%
  Let $a = \lceil{l / (k-1)}\rceil$, and
  let $(B_1, B_2, \dotsc, B_{k-1})$ be a partition of $\{x_1,x_2,\dotsc,x_l\}$ into $k-1$ cells of size $a-1$ or $a$.
  We construct $G_{l,k}$ starting from
  a copy of the complete bipartite graph $K_{k-1,k-a}$
  where the vertices of the $(k-1)$-vertex partite set
  are labelled $b_1, b_2, \dotsc, b_{k-1}$,
  and the remaining vertices are labelled $u_1, u_2, \dotsc, u_{k-a}$.
  Since $k \geq 4$ and $2 \leq a \leq k-2$, we have $k-a \geq 2$.
  Add an edge $u_1u_2$, and
  for each $i \in \seq{k-1}$ and $w \in B_i$, add an edge $wb_i$.
  We call the resulting graph $G_{l,k}$ and it is illustrated in \cref{gadgetdiagrams}(a).

  Now suppose $l > (k-2)(k-1)$.
  Let $(B_1, B_2, \dotsc, B_{k-1})$ be a partition of $\{x_1,x_2,\dotsc,x_l\}$ such that $|B_i| = k-2$ for $i \in \seq{k-2}$, and $|B_{k-1}| > k-2$.
  Take a copy of $K_{k-1,1,1}$, labelling the vertices of the $(k-1)$-vertex partite set as $b_1, b_2, \dotsc, b_{k-1}$, and the other two vertices $u_1$ and $u_2$.
  For each $i \in \seq{k-1}$, and for each $w \in B_i$, we introduce an edge $wb_i$.
  Label the resulting graph $H_{l,k}$;
  we call $H_{l,k}$ an \emph{intermediate gadget} (see \cref{gadgetdiagrams}(b)).
  Let $l_1 = d_{H_{l,k}}(b_{k-1})$. 
  Since $l_1 = l - (k-2)^2+2$, we have $k+1 \leq l_1 \leq l-2$.
  The graph $G_{l,k}$ is obtained by attaching $G_{l_1,k}$ to $H_{l,k}$ at $b_{k-1}$.
  An example of such a gadget, for $l=10, k=4$, is given in \cref{gadgetexample}, and the intermediate gadgets involved in its construction are given in \cref{gadgetexampleconstruction}.

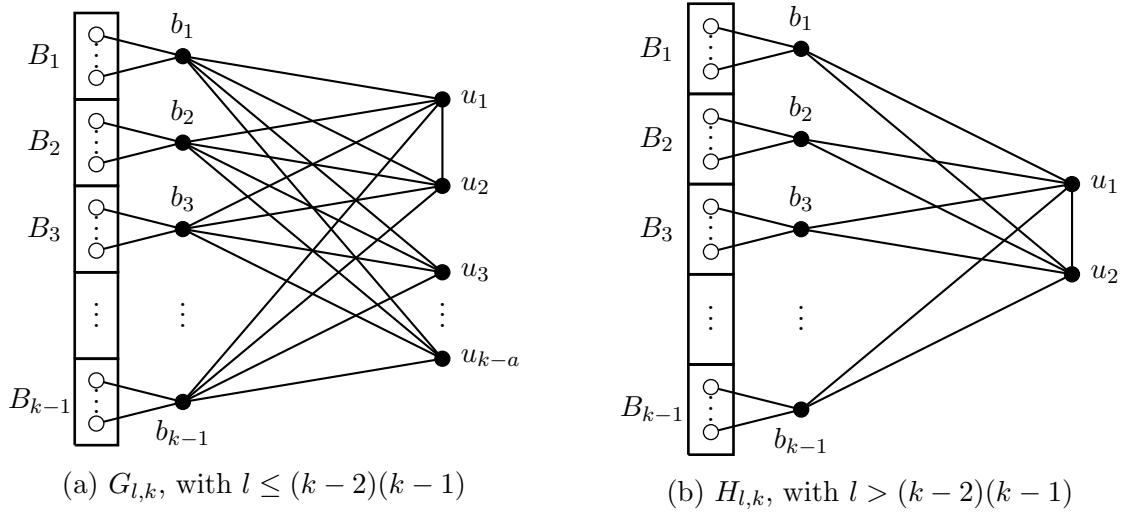
\begin{figure}
  \centering
  \begin{subfigure}{0.48\textwidth}
    \centering
    \begin{tikzpicture}[scale=1.15,line width=1pt]
      \SetVertexNoLabel
      \tikzset{VertexStyle/.append style = {fill=white,minimum height=1,minimum width=1}}
      \tikzset{VertexStyle/.append style = {draw}}
      \Vertex[x=0,y=4.25,LabelOut=true,Math=true,Lpos=180,L=x_1]{x1}
      \Vertex[x=0,y=3.75,LabelOut=true,Math=true,Lpos=180,L=x_2]{x2}
      \Vertex[x=0,y=3.25,LabelOut=true,Math=true,Lpos=180,L=x_3]{x3}
      \Vertex[x=0,y=2.75,LabelOut=true,Math=true,Lpos=180,L=x_4]{x4}
      \Vertex[x=0,y=2.25,LabelOut=true,Math=true,Lpos=180,L=x_5]{x5}
      \Vertex[x=0,y=1.75,LabelOut=true,Math=true,Lpos=180,L=x_6]{x6}
      \Vertex[x=0,y=-0.25,LabelOut=true,Math=true,Lpos=180,L=x_5]{xk1}
      \Vertex[x=0,y=0.25,LabelOut=true,Math=true,Lpos=180,L=x_k]{xk}
      \node at (0,0.1) {$\vdots$};
      \node at (0,2.1) {$\vdots$};
      \node at (0,3.1) {$\vdots$};
      \node at (0,4.1) {$\vdots$};
      \draw (-.25,-.5) -- (-.25,0.5) -- (.25,.5) -- (0.25,-.5) -- (-.25,-.5);
      \draw (-.25,0.5) -- (-.25,1.5) -- (.25,1.5) -- (0.25,.5) -- (-.25,0.5);
      \draw (-.25,1.5) -- (-.25,2.5) -- (.25,2.5) -- (.25,1.5) -- (-.25,1.5);
      \draw (-.25,2.5) -- (-.25,3.5) -- (.25,3.5) -- (.25,2.5) -- (-.25,2.5);
      \draw (-.25,3.5) -- (-.25,4.5) -- (.25,4.5) -- (.25,3.5) -- (-.25,3.5);
      \node at (0,1.1) {$\vdots$};
      \node at (-0.65,0) {$B_{k-1}$};
      \node at (-0.6,2) {$B_3$};
      \node at (-0.6,3) {$B_2$};
      \node at (-0.6,4) {$B_1$};
      \tikzset{VertexStyle/.append style = {fill=black,minimum height=4,minimum width=4}}
      \SetVertexLabel
      \Vertex[x=1,y=4,LabelOut=true,Math=true,Lpos=90,L=b_1]{b1}
      \Vertex[x=1,y=3,LabelOut=true,Math=true,Lpos=90,L=b_2]{b2}
      \Vertex[x=1,y=2,LabelOut=true,Math=true,Lpos=90,L=b_3]{b3}
      \node at (1,1.1) {$\vdots$};
      \Vertex[x=1,y=0,LabelOut=true,Math=true,Lpos=-90,L=b_{k-1}]{bk}
      \Vertex[x=4,y=3.5,LabelOut=true,Math=true,Lpos=0,L=u_1]{u1}
      \Vertex[x=4,y=2.5,LabelOut=true,Math=true,Lpos=0,L=u_2]{u2}
      \Vertex[x=4,y=1.5,LabelOut=true,Math=true,Lpos=0,L=u_3]{u3}
      \node at (4,1.1) {$\vdots$};
      \Vertex[x=4,y=0.5,LabelOut=true,Math=true,Lpos=0,L=u_{k-a}]{uka}
      \Edge(x1)(b1)
      \Edge(x2)(b1)
      \Edge(x3)(b2)
      \Edge(x4)(b2)
      \Edge(x5)(b3)
      \Edge(x6)(b3)
      \Edge(xk1)(bk)
      \Edge(xk)(bk)
      \Edge(u1)(b1)
      \Edge(u2)(b1)
      \Edge(u1)(b2)
      \Edge(u2)(b2)
      \Edge(u1)(b3)
      \Edge(u2)(b3)
      \Edge(u1)(bk)
      \Edge(u2)(bk)
      \Edge(u1)(u2)
      \Edge(u3)(b1)
      \Edge(uka)(b1)
      \Edge(u3)(b2)
      \Edge(uka)(b2)
      \Edge(u3)(b3)
      \Edge(uka)(b3)
      \Edge(u3)(bk)
      \Edge(uka)(bk)
    \end{tikzpicture} \\
    (a) $G_{l,k}$, with $l \le (k-2)(k-1)$
  \end{subfigure}
  \begin{subfigure}{0.48\textwidth}
    \centering
    \begin{tikzpicture}[scale=1.2,line width=1pt]
      \SetVertexNoLabel
      \tikzset{VertexStyle/.append style = {fill=white,minimum height=1,minimum width=1}}
      \tikzset{VertexStyle/.append style = {draw}}
      \Vertex[x=0,y=4.25,LabelOut=true,Math=true,Lpos=180,L=x_1]{x1}
      \Vertex[x=0,y=3.75,LabelOut=true,Math=true,Lpos=180,L=x_2]{x2}
      \Vertex[x=0,y=3.25,LabelOut=true,Math=true,Lpos=180,L=x_3]{x3}
      \Vertex[x=0,y=2.75,LabelOut=true,Math=true,Lpos=180,L=x_4]{x4}
      \Vertex[x=0,y=2.25,LabelOut=true,Math=true,Lpos=180,L=x_5]{x5}
      \Vertex[x=0,y=1.75,LabelOut=true,Math=true,Lpos=180,L=x_6]{x6}
      \Vertex[x=0,y=-0.25,LabelOut=true,Math=true,Lpos=180,L=x_5]{xk1}
      \Vertex[x=0,y=0.25,LabelOut=true,Math=true,Lpos=180,L=x_k]{xk}
      \node at (0,0.1) {$\vdots$};
      \node at (0,2.1) {$\vdots$};
      \node at (0,3.1) {$\vdots$};
      \node at (0,4.1) {$\vdots$};
      \draw (-.25,-.5) -- (-.25,0.5) -- (.25,.5) -- (0.25,-.5) -- (-.25,-.5);
      \draw (-.25,0.5) -- (-.25,1.5) -- (.25,1.5) -- (0.25,.5) -- (-.25,0.5);
      \draw (-.25,1.5) -- (-.25,2.5) -- (.25,2.5) -- (.25,1.5) -- (-.25,1.5);
      \draw (-.25,2.5) -- (-.25,3.5) -- (.25,3.5) -- (.25,2.5) -- (-.25,2.5);
      \draw (-.25,3.5) -- (-.25,4.5) -- (.25,4.5) -- (.25,3.5) -- (-.25,3.5);
      \node at (0,1.1) {$\vdots$};
      \node at (-0.65,0) {$B_{k-1}$};
      \node at (-0.6,2) {$B_3$};
      \node at (-0.6,3) {$B_2$};
      \node at (-0.6,4) {$B_1$};
      \tikzset{VertexStyle/.append style = {fill=black,minimum height=4,minimum width=4}}
      \SetVertexLabel
      \Vertex[x=1,y=4,LabelOut=true,Math=true,Lpos=90,L=b_1]{b1}
      \Vertex[x=1,y=3,LabelOut=true,Math=true,Lpos=90,L=b_2]{b2}
      \Vertex[x=1,y=2,LabelOut=true,Math=true,Lpos=90,L=b_3]{b3}
      \node at (1,1.1) {$\vdots$};
      \Vertex[x=1,y=0,LabelOut=true,Math=true,Lpos=-90,L=b_{k-1}]{bk}
      \Vertex[x=4,y=2.5,LabelOut=true,Math=true,Lpos=0,L=u_1]{u1}
      \Vertex[x=4,y=1.5,LabelOut=true,Math=true,Lpos=0,L=u_2]{u2}
      \Edge(x1)(b1)
      \Edge(x2)(b1)
      \Edge(x3)(b2)
      \Edge(x4)(b2)
      \Edge(x5)(b3)
      \Edge(x6)(b3)
      \Edge(xk1)(bk)
      \Edge(xk)(bk)
      \Edge(u1)(b1)
      \Edge(u2)(b1)
      \Edge(u1)(b2)
      \Edge(u2)(b2)
      \Edge(u1)(b3)
      \Edge(u2)(b3)
      \Edge(u1)(bk)
      \Edge(u2)(bk)
      \Edge(u1)(u2)
    \end{tikzpicture}\\
    (b) $H_{l,k}$, with $l > (k-2)(k-1)$
  \end{subfigure}
  \caption{Gadgets and intermediate gadgets used in the proof of \cref{colour_n4c_2'}.}
  \label{gadgetdiagrams}
\end{figure}

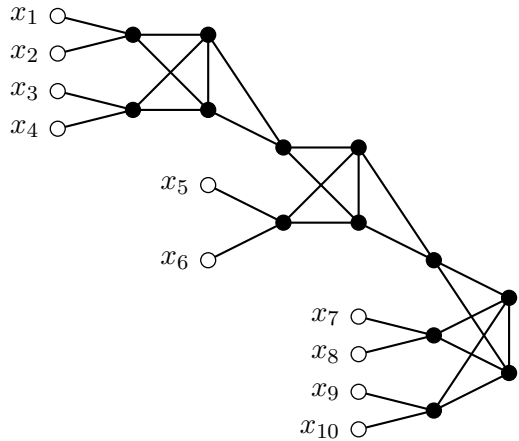
\begin{figure}
    \centering
    \begin{tikzpicture}[scale=1.0,line width=1pt]
      \tikzset{VertexStyle/.append style = {fill=white,draw,minimum height=4,minimum width=4}}
      \Vertex[x=0,y=6.25,LabelOut=true,Math=true,Lpos=180,L=x_1]{x1}
      \Vertex[x=0,y=5.75,LabelOut=true,Math=true,Lpos=180,L=x_2]{x2}
      \Vertex[x=0,y=5.25,LabelOut=true,Math=true,Lpos=180,L=x_3]{x3}
      \Vertex[x=0,y=4.75,LabelOut=true,Math=true,Lpos=180,L=x_4]{x4}
      \Vertex[x=2,y=4.0,LabelOut=true,Math=true,Lpos=180,L=x_5]{x5}
      \Vertex[x=2,y=3.0,LabelOut=true,Math=true,Lpos=180,L=x_6]{x6}
      \Vertex[x=4,y=2.25,LabelOut=true,Math=true,Lpos=180,L=x_7]{x7}
      \Vertex[x=4,y=1.75,LabelOut=true,Math=true,Lpos=180,L=x_8]{x8}
      \Vertex[x=4,y=1.25,LabelOut=true,Math=true,Lpos=180,L=x_9]{x9}
      \Vertex[x=4,y=0.75,LabelOut=true,Math=true,Lpos=180,L=x_{10}]{x10}
      \tikzset{VertexStyle/.append style = {fill=black}}
      \SetVertexNoLabel
      \Vertex[x=1,y=6,LabelOut=true,Math=true,Lpos=90]{b1}
      \Vertex[x=1,y=5,LabelOut=true,Math=true,Lpos=90]{b2}
      \Vertex[x=2,y=6,LabelOut=true,Math=true,Lpos=90]{u1}
      \Vertex[x=2,y=5,LabelOut=true,Math=true,Lpos=90]{u2}
      \Vertex[x=3,y=4.5,LabelOut=true,Math=true,Lpos=90]{b3}
      \Vertex[x=3,y=3.5,LabelOut=true,Math=true,Lpos=90]{b4}
      \Vertex[x=4,y=4.5,LabelOut=true,Math=true,Lpos=90]{u3}
      \Vertex[x=4,y=3.5,LabelOut=true,Math=true,Lpos=90]{u4}
      \Vertex[x=5,y=3,LabelOut=true,Math=true,Lpos=90]{b5}
      \Vertex[x=5,y=2,LabelOut=true,Math=true,Lpos=90]{b6}
      \Vertex[x=5,y=1,LabelOut=true,Math=true,Lpos=90]{b7}
      \Vertex[x=6,y=2.5,LabelOut=true,Math=true,Lpos=90]{u5}
      \Vertex[x=6,y=1.5,LabelOut=true,Math=true,Lpos=90]{u6}
      \Edge(x1)(b1)
      \Edge(x2)(b1)
      \Edge(x3)(b2)
      \Edge(x4)(b2)
      \Edge(u1)(b1)
      \Edge(u2)(b1)
      \Edge(u1)(b2)
      \Edge(u2)(b2)
      \Edge(u1)(u2)
      \Edge(u1)(b3)
      \Edge(u2)(b3)
      \Edge(u3)(u4)
      \Edge(u3)(b4)
      \Edge(b4)(u4)
      \Edge(u3)(b3)
      \Edge(b3)(u4)
      \Edge(b4)(x5)
      \Edge(b4)(x6)
      \Edge(b5)(u3)
      \Edge(b5)(u4)
      \Edge(b6)(x7)
      \Edge(b6)(x8)
      \Edge(b7)(x9)
      \Edge(b7)(x10)
      \Edge(u5)(b5)
      \Edge(u6)(b5)
      \Edge(u5)(b6)
      \Edge(u6)(b6)
      \Edge(u5)(b7)
      \Edge(u6)(b7)
      \Edge(u5)(u6)
    \end{tikzpicture}
    \caption{An example of a gadget, $G_{10,4}$.}
  \label{gadgetexample}
\end{figure}

\begin{figure}
  \centering
  \begin{subfigure}{0.35\textwidth}
    \centering
    \begin{tikzpicture}[scale=0.9,line width=1pt]
      \tikzset{VertexStyle/.append style = {fill=white,draw,minimum height=4,minimum width=4}}
      \Vertex[x=0,y=6.25,LabelOut=true,Math=true,Lpos=180,L=x_1]{x1}
      \Vertex[x=0,y=5.75,LabelOut=true,Math=true,Lpos=180,L=x_2]{x2}
      \Vertex[x=0,y=5.25,LabelOut=true,Math=true,Lpos=180,L=x_3]{x3}
      \Vertex[x=0,y=4.75,LabelOut=true,Math=true,Lpos=180,L=x_4]{x4}
      \Vertex[x=4,y=4.75,LabelOut=true,Math=true,L=x_5]{x5}
      \Vertex[x=4,y=4.25,LabelOut=true,Math=true,L=x_6]{x6}
      \Vertex[x=4,y=3.75,LabelOut=true,Math=true,L=x_7]{x7}
      \Vertex[x=4,y=3.25,LabelOut=true,Math=true,L=x_8]{x8}
      \Vertex[x=4,y=2.75,LabelOut=true,Math=true,L=x_9]{x9}
      \Vertex[x=4,y=2.25,LabelOut=true,Math=true,L=x_{10}]{x10}
      \tikzset{VertexStyle/.append style = {fill=black}}
      \Vertex[x=2,y=6,LabelOut=true,Math=true,Lpos=90,L=u_1]{u1}
      \Vertex[x=2,y=5,LabelOut=true,Math=true,Lpos=-90,L=u_2]{u2}
      \SetVertexNoLabel
      \Vertex[x=1,y=6,LabelOut=true,Math=true,Lpos=90]{b1}
      \Vertex[x=1,y=5,LabelOut=true,Math=true,Lpos=90]{b2}
      \Vertex[x=3,y=4.5,LabelOut=true,Math=true,Lpos=90]{b3}
      \Edge(x1)(b1)
      \Edge(x2)(b1)
      \Edge(x3)(b2)
      \Edge(x4)(b2)
      \Edge(u1)(b1)
      \Edge(u2)(b1)
      \Edge(u1)(b2)
      \Edge(u2)(b2)
      \Edge(u1)(u2)
      \Edge(u1)(b3)
      \Edge(u2)(b3)
      \Edge(b3)(x5)
      \Edge(b3)(x6)
      \Edge(b3)(x7)
      \Edge(b3)(x8)
      \Edge(b3)(x9)
      \Edge(b3)(x10)
    \end{tikzpicture} \\
    (a) $H_{10,4}$
  \end{subfigure}
  \begin{subfigure}{0.35\textwidth}
    \centering
    \begin{tikzpicture}[scale=0.9,line width=1pt]
      \tikzset{VertexStyle/.append style = {fill=white,draw,minimum height=4,minimum width=4}}
      \Vertex[x=2,y=4.0,LabelOut=true,Math=true,Lpos=180,L=x_5]{x5}
      \Vertex[x=2,y=3.0,LabelOut=true,Math=true,Lpos=180,L=x_6]{x6}
      \Vertex[x=6,y=3.25,LabelOut=true,Math=true,L=x_7]{x7}
      \Vertex[x=6,y=2.75,LabelOut=true,Math=true,L=x_8]{x8}
      \Vertex[x=6,y=2.25,LabelOut=true,Math=true,L=x_9]{x9}
      \Vertex[x=6,y=1.75,LabelOut=true,Math=true,L=x_{10}]{x10}
      \tikzset{VertexStyle/.append style = {fill=black}}
      \Vertex[x=2,y=6,LabelOut=true,Math=true,Lpos=180,L=u_1]{u1}
      \Vertex[x=2,y=5,LabelOut=true,Math=true,Lpos=180,L=u_2]{u2}
      \Vertex[x=4,y=4.5,LabelOut=true,Math=true,Lpos=90,L=u_1']{u3}
      \Vertex[x=4,y=3.5,LabelOut=true,Math=true,Lpos=-90,L=u_2']{u4}
      \SetVertexNoLabel
      \Vertex[x=3,y=4.5,LabelOut=true,Math=true,Lpos=90]{b3}
      \Vertex[x=3,y=3.5,LabelOut=true,Math=true,Lpos=90]{b4}
      \Vertex[x=5,y=3,LabelOut=true,Math=true,Lpos=90]{b5}
      \Edge(u1)(b3)
      \Edge(u2)(b3)
      \Edge(u3)(u4)
      \Edge(u3)(b4)
      \Edge(b4)(u4)
      \Edge(u3)(b3)
      \Edge(b3)(u4)
      \Edge(b4)(x5)
      \Edge(b4)(x6)
      \Edge(b5)(u3)
      \Edge(b5)(u4)
      \Edge(b5)(x7)
      \Edge(b5)(x8)
      \Edge(b5)(x9)
      \Edge(b5)(x10)
    \end{tikzpicture} \\
    (b) $H_{8,4}$
  \end{subfigure}
  \begin{subfigure}{0.25\textwidth}
    \centering
    \begin{tikzpicture}[scale=0.9,line width=1pt]
      \tikzset{VertexStyle/.append style = {fill=white,draw,minimum height=4,minimum width=4}}
      \Vertex[x=4,y=2.25,LabelOut=true,Math=true,Lpos=180,L=x_7]{x7}
      \Vertex[x=4,y=1.75,LabelOut=true,Math=true,Lpos=180,L=x_8]{x8}
      \Vertex[x=4,y=1.25,LabelOut=true,Math=true,Lpos=180,L=x_9]{x9}
      \Vertex[x=4,y=0.75,LabelOut=true,Math=true,Lpos=180,L=x_{10}]{x10}
      \tikzset{VertexStyle/.append style = {fill=black}}
      \Vertex[x=4,y=4.5,LabelOut=true,Math=true,Lpos=180,L=u_1']{u3}
      \Vertex[x=4,y=3.5,LabelOut=true,Math=true,Lpos=180,L=u_2']{u4}
      \SetVertexNoLabel
      \Vertex[x=5,y=3,LabelOut=true,Math=true,Lpos=90]{b5}
      \Vertex[x=5,y=2,LabelOut=true,Math=true,Lpos=90]{b6}
      \Vertex[x=5,y=1,LabelOut=true,Math=true,Lpos=90]{b7}
      \Vertex[x=6,y=2.5,LabelOut=true,Math=true,Lpos=90]{u5}
      \Vertex[x=6,y=1.5,LabelOut=true,Math=true,Lpos=90]{u6}
      \Edge(b5)(u3)
      \Edge(b5)(u4)
      \Edge(b6)(x7)
      \Edge(b6)(x8)
      \Edge(b7)(x9)
      \Edge(b7)(x10)
      \Edge(u5)(b5)
      \Edge(u6)(b5)
      \Edge(u5)(b6)
      \Edge(u6)(b6)
      \Edge(u5)(b7)
      \Edge(u6)(b7)
      \Edge(u5)(u6)
    \end{tikzpicture} \\
    (c) $G_{6,4}$
  \end{subfigure}
  \caption{The intermediate gadgets used in the construction of $G_{10,4}$.}
  \label{gadgetexampleconstruction}
\end{figure}
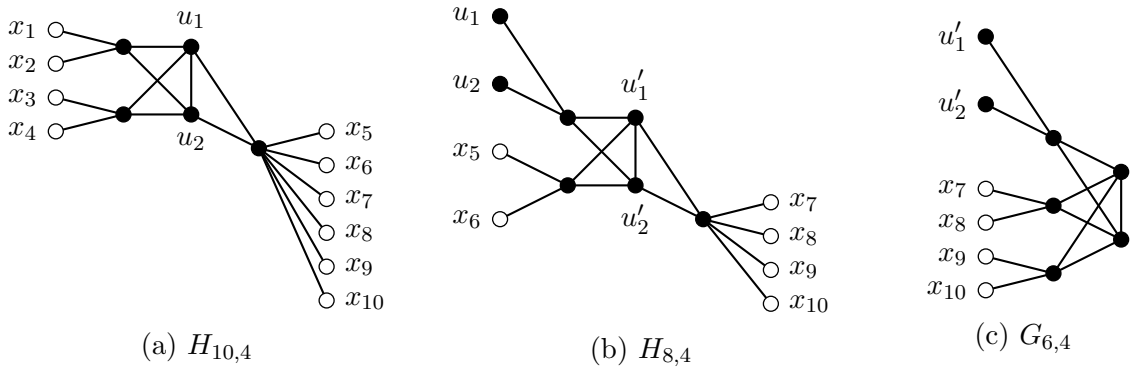

\begin{proposition}
  \label{colour_n4c_2'}
  For any fixed $k\geq 4$, the problem of deciding if a
  $(k-1)$-connected graph with maximal local edge-connectivity~$k$ is
  $3$-colourable is NP-complete.
\end{proposition}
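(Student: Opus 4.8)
The plan is to give a polynomial reduction from \textsc{$3$-colourability} restricted to $(k-1)$-connected graphs; this problem is NP-complete by \cref{conn-lemma} (applied with $3$ in the role of the number of colours and $k-1$ in the role of the connectivity). Given such a graph $G$, I build $G'$ by attaching, for each vertex $v$ with $d_G(v)\geq k+1$, a copy of the gadget $G_{d_G(v),k}$ at $v$, processing these vertices one at a time so that two adjacent vertices of high degree are handled by successively rewiring neighbourhoods, and deleting any parallel edges created. Attaching a gadget at $v$ deletes $v$ and replaces each edge incident with $v$ by an edge to an internal vertex of the gadget, so the only vertices of $G'$ not already in $G$ are internal gadget vertices, each of degree at most $k$ by construction, while every surviving vertex of $G$ keeps a degree of at most $k$. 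Hence $\Delta(G')\le k$, and therefore $\lambda_{G'}(x,y)\le\min\{d_{G'}(x),d_{G'}(y)\}\le k$ for all distinct $x,y$, so $G'$ has maximal local edge-connectivity~$k$. Since the recursion obeys $l_1=l-(k-2)^2+2\le l-2$, each gadget has $O(kl)=O(n)$ vertices, so the reduction is polynomial; and the problem is clearly in NP.

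Correctness rests on two properties of $G_{l,k}$, both proved by induction on its recursive construction. First, in every proper $3$-colouring of $G_{l,k}$ some colour is received by no outlet: each $b_i$ must avoid the two distinct colours of its common neighbours $u_1,u_2$, so all the $b_i$ share one colour $c$; every outlet is adjacent to some $b_i$ and hence avoids $c$; and in the recursive step the colour avoided by the outlets of the inner gadget is forced to be $c$, since $u_1,u_2$ are among those outlets. Second, for every colour $c$ and every assignment of colours from $\{1,2,3\}\setminus\{c\}$ to the outlets there is a proper $3$-colouring of $G_{l,k}$ extending it: colour every $b_i$ with $c$, colour $u_1,u_2$ with the two colours different from $c$ (possible since $k-a\ge 2$), and recurse on the inner gadget, whose outlets now also carry colours different from $c$. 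Given these: a $3$-colouring of $G$ extends to $G'$ by applying the second property in each gadget of a replaced vertex $v$, taking $c$ to be the colour of $v$, whose $G$-neighbours are exactly the outlets of the gadget and so avoid that colour; conversely a $3$-colouring of $G'$ restricts to $G$ by assigning each replaced $v$ the colour avoided by the outlets of its gadget — which exists by the first property, and which is proper because every $G$-neighbour of $v$ is one of those outlets.

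The main obstacle is showing that $G'$ is $(k-1)$-connected; this is precisely what the elaborate design of $G_{l,k}$ is meant to secure. I would show that $G'$ has no vertex cut $S$ with $|S|\le k-2$ by a Menger-type argument. Projecting $S$ onto $V(G)$ — replacing each internal gadget vertex by the vertex it replaced — yields a set $S'$ with $|S'|\le k-2$, so $G-S'$ is connected since $G$ is $(k-1)$-connected. For $p,q\in V(G')\setminus S$, the key claim is that $p$ and $q$ can each reach a surviving outlet of the gadget containing it while avoiding $S$: inside a gadget the $k-1$ ``ports'' $b_1,\dots,b_{k-1}$ and their near-balanced outlet classes $B_i$ are linked pairwise by the $k-a\ge 2$ internally disjoint routes through $u_1,\dots,u_{k-a}$, so severing all ports costs at least one vertex of $S$ per $b_i$, which a budget of $k-2$ cannot afford; one then lifts a $p'q'$-path of $G-S'$ through the free ports. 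The delicate point is to run this counting uniformly along the recursive chain of $H$-blocks while keeping track of the external connectivity that $G$ supplies at the outlets, and I expect this case analysis to be the longest and most intricate part of the argument. Since $G'\in\mathcal{C}_1^k$, is $(k-1)$-connected, and is $3$-colourable exactly when $G$ is, the reduction is complete and the problem is NP-complete.
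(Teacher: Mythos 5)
Your overall architecture coincides with the paper's: a reduction from \textsc{$3$-colourability} on $(k-1)$-connected graphs (via \cref{conn-lemma}), attachment of the gadgets $G_{d(v),k}$ at the high-degree vertices, the observation that $\Delta(G')\le k$ gives maximal local edge-connectivity~$k$, and the colouring equivalence via the forced common colour of the $b_i$'s; your two gadget properties (the common colour propagates down the recursion and is avoided by all outlets; any outlet precolouring avoiding a fixed colour extends) are a correct, and in fact more explicit, version of the paper's diamond argument. Two small remarks: no parallel edges can arise, since every neighbour of a replaced vertex is joined to a brand-new vertex $b_i$; and when two replaced vertices are adjacent you should state the $3$-colourability equivalence for a single attachment and induct, because an outlet of one gadget may be an internal vertex of another, so the phrase ``every $G$-neighbour of $v$ is one of those outlets'' is only literally true at the step where $v$ is processed (your one-at-a-time processing covers this, but the argument should be phrased per attachment).

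The genuine gap is the proof that $G'$ is $(k-1)$-connected, which you only sketch and whose ``longest and most intricate part'' you explicitly leave undone; this is roughly half of the paper's proof and is exactly where the work lies. Moreover, the counting you do indicate (``severing all ports costs at least one vertex of $S$ per $b_i$'') cannot succeed purely inside a gadget: for $l>(k-2)(k-1)$ the pair $\{u_1,u_2\}$ is a $2$-cut of $G_{l,k}$ separating the internal vertices of the attached copy of $G_{l_1,k}$ (together with $B_{k-1}$) from the outer block, so for $k\ge 4$ the gadget alone is nowhere near $(k-1)$-connected, and any correct argument must quantify the connectivity that $G$ supplies among the outlet classes at every level of the recursive chain. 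The paper organises precisely this by first proving that $J'_{l,k}=J_{l,k}\cup K_l$ (the gadget, or intermediate gadget, with a clique on its outlets) is $(k-1)$-connected, then taking a minimal cut $Z$ of $G'$ with $|Z|<k-1$ and splitting into cases according to whether $Z$ avoids the new gadget vertices or whether one side of the separation lies entirely inside them, contradicting the $(k-1)$-connectivity of $G$ in the first case and of $J'_{l,k}$ in the second, with the full recursive gadget handled as a chain of intermediate-gadget attachments. Your projection-and-lifting plan could plausibly be completed along these lines, but as written your ``key claim'' is unproved and the budget analysis along the chain of $H$-blocks is precisely the missing content, so the proposal does not yet constitute a proof of the proposition.
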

\begin{proof}
  Let $G$ be a $(k-1)$-connected graph, and let $G'$ be the graph obtained by attaching a gadget $G_{d(v),k}$ to $G$ at $v$ for each vertex $v$ of degree at least $k+1$.
  It is not difficult to verify that $G'$ can be constructed in polynomial time and that every vertex of $G'$ has degree at most $k$, so $G'$ has maximal local edge-connectivity~$k$.  Moreover, for all distinct $i,j \in \seq{k-1}$, the vertices $\{b_i,b_j,u_1,u_2\}$ induce a diamond in $G'$, so the pick vertices $\{b_1,b_2,\dotsc,b_{k-1}\}$ of these diamonds must have the same colour in a $3$-colouring of $G'$.  Now, given a $3$-colouring of $G'$, we can $3$-colour $G$, where a vertex $v \in V(G)$ replaced by a gadget $G_{l,k}$ in $G'$ is given the colour shared by the vertices $\{b_1,b_2,\dotsc,b_{k-1}\}$ of $G_{l,k}$.  It is also straightforward to verify that if $G$ is $3$-colourable, then $G'$ is $3$-colourable.

  It remains to show that $G'$ is $(k-1)$-connected.
  We may assume, by induction, that $G'$ is obtained from $G$ by attaching one gadget $G_{l,k}$.
  Moreover, when $l > (k-2)(k-1)$, we can view the attachment of a gadget $G_{l,k}$ as a sequence of attachments of intermediate gadgets $H_{l_0,k},H_{l_1,k},H_{l_2,k},\dotsc,H_{l_{s-1},k}, G_{l_s,k}$ where
  $l=l_0$ and $l_{i} = l_{i-1} - (k-2)^2 +2$ for $i \in \seq{s}$,
  and $l_s \le (k-2)(k-1)$.
  We need only to show that the attachment of 
  a gadget $G_{l,k}$, or of an intermediate gadget $H_{l,k}$, 
  preserves $(k-1)$-connectivity.

  Loosely speaking, we start by proving that 
  the gadget, or intermediate gadget, itself is sufficiently connected.
  Let $l > k \ge 4$.
If $l \le (k-2)(k-1)$, then set $J_{l,k} = G_{l,k}$, otherwise set $J_{l,k}=H_{l,k}$. 
  Let $K_l$ be a copy of the complete graph with vertex set $N_G(v)=\{x_1,x_2,\dotsc,x_l\}$. 
  We will prove that $J'_{l,k} = J_{l,k} \cup K_l$ is $(k-1)$-connected. 
  Let $(X,Z,Y)$ be a $j$-separation of $J'_{l,k}$, for some $j < k-1$, such that $Z$ is a minimal vertex cut. 
Set $U=\{u_1,u_2\}$  if $J_{l,k}=H_{l,k}$ and $U=\{u_1, \dots u_{k-a}\}$ if $J_{l,k}=G_{l,k}$. 
  Suppose $u_1 \in X$.
  Then $N(u_1) = \{b_1,b_2,\dotsc,b_{k-1}\} \cup \{u_2\}$ is contained in $X \cup Z$.   
  If $|U| \ge 3$ and there exists $u \in U \setminus \{u_1,u_2\}$ such that $u \in Y$, then $N(u) = \{b_1,b_2,\dotsc,b_{k-1}\} \subseteq Z$, contradicting $|Z| < k-1$. 
So $U \cup \{b_1, \dots, b_{k-1}\} \subseteq X \cup Z$ and thus  
  $Y \subseteq \{x_1,x_2,\dotsc,x_l\}$. 
For $i=1, \dots, k-1$, either $b_i \in Z$ or $B_i \subseteq Z$, so $|Z| \ge k-1$, a contradiction.
  Now we may assume that $u_1 \in Z$ and, by symmetry, $u_2 \in Z$.
  Since $Z$ is minimal, $u_1$ 
  has a neighbour in $X$ and a neighbour in $Y$.  Suppose $b_i \in X$ and $b_j \in Y$ for $i,j \in \seq{k-1}$.
  In order for $Z$ to separate $b_i$ and $b_j$, we require 
  $U \cup B_i \subseteq Z$ or $U \cup B_j \subseteq Z$,
  so $|Z| \ge k-1$, a contradiction.
  Thus $J'_{l,k}$ is $(k-1)$-connected.

  We now return to proving that $G'$ is $(k-1)$-connected.
  Suppose, towards a contradiction, that $G'$ is not $(k-1)$-connected. 
Let $(X,Z,Y)$ be a $j$-separation of $G'$ for some $j <k-1$, such that $Z$ is a minimal vertex cut.
  We denote by $G'_{l,k}$ the subgraph of $G_{l,k}$ obtained by deleting the vertices in common with $G$, namely $N_G(v)$.  Note that $V(G')$ is the disjoint union of $V(G-v)$ and $V(G'_{l,k})$.

  First, suppose that $Z \subseteq V(G-v)$.
  Since $G_{l,k}'$ is connected, and $Z \subseteq V(G-v)$, we deduce that, without loss of generality, $V(G_{l,k}') \subseteq Y$.
  It follows that
  $Z$ is a $j$-vertex cut that separates $X$ from $(Y \setminus V(G'_{l,k})) \cup \{v\}$ in $G$; a contradiction.  

  Now suppose that $Z \nsubseteq V(G-v)$.
  Moreover, suppose that  
  $X \cap V(G-v)$ and $Y \cap V(G-v)$ are both non-empty.
  Then $Z \cup V(G'_{l,k})$ separates $X  \cap V(G-v)$ from $Y \cap V(G-v)$ in $G'$, so $(Z \setminus V(G'_{l,k})) \cup \{v\}$  is a vertex cut of $G$ and since $Z \nsubseteq V(G)$, we have $|(Z \setminus V(G'_{l,k})) \cup \{v\}| \le j <k-1$, a contradiction to the fact that $G$ is $(k-1)$-connected.
Thus, either $X  \subseteq  V(G'_{l,k})$ or $Y  \subseteq  V(G'_{l,k})$.
  Assume without loss of generality that $X \subseteq V(G'_{l,k})$ and $V(G-v) \subseteq Y \cup Z$. 
Since $|N_G(v)| \ge k$, $Y \cap N_G(v) \neq \emptyset$. 
Hence $Z \cap (V(G'_{l,k}) \cup N_G(v))$ separates $Y \cap  (V(G'_{l,k}) \cup N_G(v))$ from $X$ in $J'_{l,k}$, a contradiction.
%
\end{proof}

\Cref{colour_n4c_2} now follows from \cref{mlcnpc3} and \cref{colour_n4c_2'}.
Note that \cref{colour_n4c_2'} rules out (unless P=NP) the possibility of a polynomial-time algorithm that computes the chromatic number (or finds an optimal colouring) for a graph with maximal local edge-connectivity~$k$, for $k \geq 4$.  However, there may exist a polynomial-time algorithm that, given such a graph, finds a $k$-colouring or determines that none exists.
Thus, a result in the style of \cref{thm:brooks3gen} that characterises when graphs with maximal local edge-connectivity~$4$ are $4$-colourable remains a possibility.

\section{Minimally $k$-connected graphs}

\label{secnpc2}

In this section we
prove that 
deciding if a minimally $k$-connected graph is $k$-colourable is NP-complete.
To do this, we perform a reduction from the following problem, where $k$ is a fixed integer at least three.
A hypergraph is \emph{$k$-uniform} if each hyperedge is of size $k$.
\\

\noindent
\textsc{$k$-uniform hypergraph $k$-colourability} \\
\textbf{Instance:} A $k$-uniform hypergraph $H$. \\
\textbf{Question:} Is there a $k$-colouring of $H$ for which no edge is monochromatic? \\

The problem of deciding if a hypergraph is $2$-colourable is well known to be NP-complete~\cite{Lovasz1973}, and the search problem of finding a $k$-colouring for a $k$-uniform hypergraph, for $k \geq 3$, is shown in~\cite{Dinur2005} to be NP-hard, even when restricted to such hypergraphs that are $(k-1)$-colourable.
However, to the best of our knowledge, no proof that \textsc{$k$-uniform hypergraph $k$-colourability} is NP-complete has been published, so we provide one here for completeness.

\begin{proposition}
  The problem \textsc{$k$-uniform hypergraph $k$-colourability} is NP-complete for fixed $k \geq 3$.
\end{proposition}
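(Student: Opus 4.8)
The problem is clearly in NP: given a $k$-colouring one checks in polynomial time that every hyperedge receives at least two colours. For hardness I would reduce from \textsc{$k$-colourability} of ordinary graphs, which is NP-complete for every fixed $k\ge 3$. The heart of the reduction is a \emph{colour-inequality gadget}: a $k$-uniform hypergraph $I(x,y)$ with two distinguished vertices $x,y$ such that (a) in every $k$-colouring $\phi$ of $I(x,y)$ with no monochromatic hyperedge we have $\phi(x)\ne\phi(y)$, and (b) every colouring of $\{x,y\}$ with $\phi(x)\ne\phi(y)$ extends to such a colouring of $I(x,y)$. Given such a gadget the reduction is immediate: from a graph $G$, build the $k$-uniform hypergraph $H$ on vertex set $V(G)$ together with, for each edge $uv\in E(G)$, a fresh copy of $I(u,v)$ meeting the rest of $H$ only in $\{u,v\}$ (every other vertex of the copy being private to it). Then $H$ has size polynomial in $|G|$ (with $k$ fixed), and by (a) and (b) --- extending a proper colouring of $G$ gadget by gadget in one direction, restricting a valid colouring of $H$ to $V(G)$ in the other --- $H$ admits a $k$-colouring with no monochromatic hyperedge if and only if $G$ is $k$-colourable.

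To construct $I(x,y)$ I would first build a \emph{colour-equality gadget} $M$. Put $N = k(k-1)+1$, and let $M$ be the complete $k$-uniform hypergraph on a vertex set $\{w_1,\dots,w_k\}\cup Z$, where $|Z|=(k-1)^2$, with the single hyperedge $\{w_1,\dots,w_k\}$ removed. The key claim is that in every $k$-colouring of $M$ with no monochromatic hyperedge the set $\{w_1,\dots,w_k\}$ is a colour class (in particular $w_1,\dots,w_k$ share a colour), while conversely any colouring assigning $\{w_1,\dots,w_k\}$ a single colour extends to such a colouring of $M$. The forward direction is a counting argument: any set of at least $k+1$ vertices contains two distinct $k$-subsets, and only one $k$-subset of the complete $k$-uniform hypergraph was removed, so it spans a hyperedge of $M$; hence no colour class has more than $k$ vertices. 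A colour class of size exactly $k$ is its own unique $k$-subset, and as it spans no hyperedge of $M$ this $k$-subset must be the removed one, so the class equals $\{w_1,\dots,w_k\}$. Since $N>k(k-1)$, the $k$ colour classes cannot all have size at most $k-1$, so some class has size exactly $k$ and therefore equals $\{w_1,\dots,w_k\}$. For the converse, give $\{w_1,\dots,w_k\}$ one colour and split $Z$ into $k-1$ classes of size $k-1$ using the remaining colours; then no colour class has size $\ge k$, so no hyperedge is monochromatic.

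Finally, I would define $I(x,y)$ by taking $k-1$ fresh vertices $b_1,\dots,b_{k-1}$, a fresh copy of $M$ with $(w_1,\dots,w_k)=(y,b_1,\dots,b_{k-1})$ --- which forces $\phi(y)=\phi(b_1)=\dots=\phi(b_{k-1})$ --- together with one extra hyperedge $\{x,b_1,\dots,b_{k-1}\}$. Since the $b_i$ all get colour $\phi(y)$, this extra hyperedge is monochromatic precisely when $\phi(x)=\phi(y)$, which gives property (a); and any colouring with $\phi(x)\ne\phi(y)$ extends by setting each $b_i$ to $\phi(y)$ and extending $M$ as above, which gives property (b). The only genuinely substantive step is the counting claim for $M$; after that the argument is routine, the points needing care being that the private vertices of distinct gadget copies are kept pairwise disjoint, and that $k$ is treated as a fixed constant, so that each gadget --- and hence the whole construction --- has polynomial size.
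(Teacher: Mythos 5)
Your proof is correct, and it follows the same broad strategy as the paper (a reduction from \textsc{$k$-colourability} of graphs, with rigidity of a gadget established by a counting argument on a complete $k$-uniform hypergraph with some $k$-sets removed), but the construction itself is genuinely different. The paper builds a \emph{single global} palette gadget $H_0$: $k$ disjoint groups $V_1,\dotsc,V_k$ of $k$ vertices each, with all $k$-subsets as hyperedges except the groups themselves; a counting argument like yours shows that in any valid colouring the colour classes are exactly $V_1,\dotsc,V_k$, with pairwise distinct colours. Each graph edge $uv$ is then encoded by $k$ hyperedges of the form $\{u,v\}\cup S_i$ with $S_i\subseteq V_i$, $|S_i|=k-2$, one per colour, which forbids $u$ and $v$ from sharing colour $i$ for every $i$ and hence forces $\phi(u)\neq\phi(v)$. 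You instead localise everything into a fresh per-edge disequality gadget $I(u,v)$, built from an equality gadget $M$ (complete $k$-uniform hypergraph on $k(k-1)+1$ vertices minus the one hyperedge $\{w_1,\dotsc,w_k\}$) plus a single hyperedge $\{x,b_1,\dotsc,b_{k-1}\}$; your counting claim for $M$ is sound, and properties (a) and (b) make the correctness argument cleanly modular. The trade-off is size and economy: the paper pays for one $k^2$-vertex gadget plus only $k$ extra hyperedges per graph edge, whereas you pay $\Theta(k^2)$ fresh vertices and $\binom{k(k-1)+1}{k}$ hyperedges per edge --- still polynomial for fixed $k$, as you correctly note, so both reductions are valid; your equality/disequality gadgets are arguably more reusable, while the paper's shared palette yields a leaner instance and a shorter uniqueness argument.
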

\begin{proof}
  Let $V_1, V_2, \dotsc,V_k$ be $k$ disjoint sets each consisting of $k$ distinct vertices, and
  let $H_0$ be the $k$-uniform hypergraph with vertex set $V_1 \cup V_2 \cup \dotsm \cup V_k$ whose hyperedges consist of all $k$-element subsets of $V(H_0)$ not in $\{V_1, V_2, \dotsc, V_k\}$.
  Then, in a $k$-colouring of $H_0$, for each subset $X$ of $V(H_0)$ of size at least $k$, either $X$ is not monochromatic or $X$ is one of $V_1, V_2, \dotsc,V_k$.
  It follows that a $k$-colouring of $H_0$ is unique, up to a permutation of the colours: each $V_i$, for $i \in \seq{k}$, is monochromatic, and for distinct $i,j \in \seq{n}$, the colours given to $v_i \in V_i$ and $v_j \in V_j$ are distinct.

  We perform a reduction from \textsc{$k$-colourability}.  Let $G$ be a graph.  We construct a $k$-uniform hypergraph $H$ as follows. Start with the hypergraph on the vertex set $V(H_0) \cup V(G)$, where $V(H_0)$ and $V(G)$ are disjoint, and containing all the hyperedges of $H_0$.  For each edge $uv$ of $G$ and each $i \in \seq{k}$, introduce a hyperedge consisting of $u$, $v$, and $k-2$ vertices of $V_i$. 
  Each such hyperedge enforces that in a $k$-colouring of $H$, the vertices $u$ and $v$ do not both have colour~$i$.
%
  Thus, if $H$ is $k$-colourable, then $G$ is $k$-colourable.
  Now suppose that $\phi$ is a $k$-colouring of $G$.  Then, by assigning a vertex $v \in V(G)$ the colour~$\phi(v)$ in $H$, and colouring each vertex $v \in V(H_0)$ the colour~$i$ if $v \in V_i$, 
  we obtain a $k$-colouring of $H$.  This completes the proof.
\end{proof}


\begin{proof}[Proof of \cref{m3c_npc}]
  We perform a reduction from \textsc{$k$-uniform hypergraph $k$-colourability}.
  Let $H$ be a $k$-uniform hypergraph with vertex set $\{v_1,v_2,\dotsc,v_{h}\}$.
  We will construct a minimally $k$-connected graph $G$ with $\{v_1,v_2,\dotsc,v_{h}\} \subseteq V(G)$.
  For each hyperedge $e=u_1u_2\dotsm u_k$, where $u_i \in \{v_1,v_2,\dotsc,v_{h}\}$ for $i \in \seq{k}$, let $P_e$ be the graph on $2k$ vertices that is the union of the complete graph $K_k$ on the vertices $\{k_1,k_2,\dotsc,k_k\}$, and $k$ vertex-disjoint edges $\{u_1k_1,u_2k_2,\dotsc,u_kk_k\}$.  For $k=3$, this graph is given in \cref{extensions-fig}.
  For each $l \in \seq{h}$, let $Q_l$ be the graph on $3k-1$ vertices obtained from the complete bipartite graph $K_{k,k-1}$ with $k$-element partite set $\{b_1,b_2,\dotsc,b_k\}$ by adding $k$ vertex-disjoint edges $b_iv_j$ where $j \equiv i + l \pmod h$ for each $i \in \seq{k}$. 
  For $k=3$, this graph is given in \cref{extensions-fig}.
  Finally, we obtain $G$ from the union of the $i$ graphs $Q_i$, for each $i \in \seq{h}$, and the $|E(H)|$ graphs $P_e$, for each $e \in E(H)$.
  Note that a vertex $v_i$, for $i \in \seq{h}$, is common to $Q_{i-k},Q_{i-k+1},\dotsc,Q_{i-1}$ (with indices interpreted modulo $h$) and $P_e$ for any hyperedge $e$ containing $v_i$.

\begin{figure}
  \centering
  \begin{subfigure}{0.5\textwidth}
    \centering
    \begin{tikzpicture}[scale=1]
      \tikzset{VertexStyle/.append style = {draw,minimum height=7,minimum width=7}}
      \Vertex[x=0,y=2,LabelOut=true,L=$u_1$,Lpos=180]{a}
      \Vertex[x=0,y=1,LabelOut=true,L=$u_2$,Lpos=180]{b}
      \Vertex[x=0,y=0,LabelOut=true,L=$u_3$,Lpos=180]{c}
      \SetVertexNoLabel
      \tikzset{VertexStyle/.append style = {fill=black}}
      \Vertex[x=2,y=2,LabelOut=true,L=$k_1$,Lpos=180]{x1}
      \Vertex[x=1,y=1,LabelOut=true,L=$k_2$,Lpos=180]{x2}
      \Vertex[x=2,y=0,LabelOut=true,L=$k_3$,Lpos=180]{x3}
      \Edge(x1)(a)
      \Edge(x2)(b)
      \Edge(x3)(c)
      \Edge(x1)(x2)
      \Edge(x2)(x3)
      \Edge(x3)(x1)
    \end{tikzpicture}\\
     $P_e$
  \end{subfigure}%
  \begin{subfigure}{0.5\textwidth}
    \centering
    \begin{tikzpicture}[scale=1]
      \tikzset{VertexStyle/.append style = {draw,minimum height=7,minimum width=7}}
      \Vertex[x=0,y=2,LabelOut=true,L=$v_{l+1}$,Lpos=180]{a}
      \Vertex[x=0,y=1,LabelOut=true,L=$v_{l+2}$,Lpos=180]{b}
      \Vertex[x=0,y=0,LabelOut=true,L=$v_{l+3}$,Lpos=180]{c}
      \SetVertexNoLabel
      \tikzset{VertexStyle/.append style = {fill=black}}
      \Vertex[x=1,y=2]{x1}
      \Vertex[x=1,y=1]{x2}
      \Vertex[x=1,y=0]{x3}
      \Vertex[x=2,y=1.5]{y1}
      \Vertex[x=2,y=0.5]{y2}
      \Edge(x1)(a)
      \Edge(x2)(b)
      \Edge(x3)(c)
      \Edge(y1)(x1)
      \Edge(y1)(x2)
      \Edge(y1)(x3)
      \Edge(y2)(x1)
      \Edge(y2)(x2)
      \Edge(y2)(x3)
    \end{tikzpicture}\\
     $Q_l$
  \end{subfigure}
  \caption{Gadgets used in the proof of \cref{m3c_npc}.}
  \label{extensions-fig}
\end{figure}

  Suppose we have a $k$-colouring for $G$.  Then, since each vertex of a $K_k$ subgraph is coloured a different colour, each set of vertices $\{u_1,u_2,\dotsc,u_k\}$ corresponding to a hyperedge $e$ is not monochromatic.  So the vertex colouring of the graph $G$ gives us a colouring of the hypergraph $H$ where no hyperedge is monochromatic.

  Now suppose we have a colouring $\phi$ of $H$ where no hyperedge is monochromatic.  Starting from the colouring on $\{v_1, v_2, \dotsc, v_{h}\}$ given by 
  $\phi$,
  we can extend this to a colouring of $G$ as follows. 
  Consider a $Q_l$ subgraph.  For each $v \in \{v_{l+1},v_{l+2},\dotsc,v_{l+k}\}$, if $\phi(v) \neq 1$, we assign the vertex adjacent to $v$ in $Q_l$ colour~$1$; otherwise, it is assigned colour~$2$.  The remaining $k-1$ vertices of $Q_l$ can then be assigned colour~$3$.
  Now consider a $P_e$ subgraph. Let $U = \{u_1,u_2,\dotsc,u_k\}$, let $C$ be the set of colours $\phi(U)$, and let $\sigma$ be a permutation of $C$ with no fixed points; such a permutation exists since no hyperedge of $H$ is monochromatic, so $|C| > 1$.
  For each $c \in C$, pick a vertex $u \in U$ with $\phi(u) = c$, and colour the vertex adjacent to $u$ in $P_e$ the colour $\sigma(c)$.
  Now each of the remaining $k-|C|$ uncoloured vertices can be assigned one of the $k-|C|$ unused colours arbitrarily.
  So $G$ is $k$-colourable if and only if $H$ is $k$-colourable.

  For every edge $xy$ of $G$, at least one of $x$ or $y$ has degree $k$, so $G \ba xy$ is at most $(k-1)$-connected.
  Moreover, it is not difficult to see there are at least $k$ internally disjoint paths between any pair of vertices, so $G$ is $k$-connected.  Hence $G$ is minimally $k$-connected, as required.
\end{proof}




\section{Graphs with a bounded number of vertices of degree more than~$k$}
\label{secbigvs}

In this section we prove \cref{fewbigvs}.  
The proof of this result relies on a generalisation of Brooks' theorem established independently by Borodin~\cite{Bor77} and by Erd\H{o}s, Rubin and Taylor~\cite{ERT79}.

A \emph{list assignment} for a graph $G$ is a function $L$ that
associates to every vertex $v\in V(G)$ a set $L(v)$ of integers that
are called the \emph{colours associated with $v$}.  A {\it
  degree-list-assignment} of a graph $G$ is a list assignment $L$ such
that $|L(v)|\geq d_G(v)$ for every $v\in V(G)$.  An
\emph{$L$-colouring} of $G$ is a function $c$ from $V(G)$ such that,
for all $v\in V(G)$, we have $c(v) \in L(v)$, and, for all edges $uv$,
we have $c(u)\neq c(v)$.  A graph $G$ is \emph{$L$-colourable} if it admits at
least one $L$-colouring.  A graph $G$ is \emph{degree-choosable} if $G$
is $L$-colourable for any degree-list-assignment $L$.  A 
graph is a \emph{Gallai tree} if it is connected and each of its
blocks is either a complete graph or an odd cycle.

\begin{theorem}[Borodin~\cite{Bor77}, Erd\H{o}s, Rubin and
   Taylor~\cite{ERT79}]
   \label{gallaitree}
   Let $G$ be a connected graph.  Then $G$ is degree-choosable if and
   only if $G$ is not a Gallai tree.  Moreover, if $G$ is not a Gallai
   tree, then there is an $O(m)$-time algorithm that, given a
   degree-list-assignment $L$, finds an $L$-colouring.
\end{theorem}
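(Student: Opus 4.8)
The plan is to prove both implications while keeping every step of the "only if" direction constructive, so that the stated $O(m)$-time algorithm falls out of the argument itself.

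\textbf{The easy direction} (a Gallai tree is not degree-choosable) I would handle by exhibiting a bad degree-list-assignment. For a single block the obstruction is classical: give $K_n$ the constant list $\{1,\dots,n-1\}$ at every vertex (uncolourable, since $n$ mutually adjacent vertices need $n$ distinct colours), and give an odd cycle the constant list $\{1,2\}$ (uncolourable, since odd cycles are not bipartite). For a general Gallai tree I would induct on the block--cut tree, proving the slightly stronger rooted statement that for a Gallai tree $G$ rooted at a vertex $r$ there is an assignment with $|L(v)|=d_G(v)$ for $v\neq r$ and one colour short at $r$ admitting no $L$-colouring; the two single-block constructions, with one colour deleted at $r$, are the base cases, and at a cut vertex one glues the pieces so that the colour "missing" at a child root is reinstated by the parent block. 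The bookkeeping is routine once the invariant is fixed.

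\textbf{The hard direction} (not a Gallai tree $\Rightarrow$ degree-choosable) is where the work lies, and I would argue constructively. Given a degree-list-assignment $L$, first make two reductions. If some vertex $v$ has $|L(v)|>d_G(v)$, colour it last: colour $G-v$ (smaller, and its relevant part is still not a Gallai tree) and then choose for $v$ a colour avoiding its $d_G(v)$ neighbours, which is possible by the strict inequality. If $G$ has a cut vertex, split it into the side carrying a block that is neither a clique nor an odd cycle and the remaining side; in each side the cut vertex has degree strictly smaller than $d_G$, hence slack, so the first reduction colours each side and the pieces are pasted by permuting colours at the cut vertex. These reductions leave the crux: a $2$-connected graph that is neither complete nor an odd cycle, with $|L(v)|=d_G(v)$ throughout. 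Here, if $G$ is a cycle it is even by hypothesis and hence $2$-choosable (classical). Otherwise $G$ has a vertex of degree at least $3$, so being $2$-connected it contains a theta subgraph, and since the three cycle lengths of a theta sum to an even number at least one of them is even; thus $G$ contains an even cycle $C_0$. Taking an open ear decomposition based at $C_0$, I would colour the internal vertices of the ears greedily in reverse ear order, each having at most one already-coloured neighbour and a list of size $2$, and then colour $C_0$ together with the chords arising from length-one ears, where every vertex's surviving list has size at least its degree in this chorded cycle.

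\textbf{The main obstacle} is exactly controlling those chords, equivalently explaining why complete graphs are the only $2$-connected tight-list graphs that resist colouring: the naive ear argument above would "prove" even $K_n$ colourable, so the induction must genuinely use that $G$ is not complete. I would resolve this by induction on the number of edges of the chorded cycle, deleting a chord whose removal keeps the graph $2$-connected and non-complete (this creates slack at its ends and reduces to the earlier slack reduction), with the even base cycle's $2$-choosability terminating the recursion; when no such chord exists the graph is forced to be a clique or a (necessarily even) cycle, which is precisely where the Lov\'asz-type last-vertex argument already used in \cref{3c_nd} supplies the required colour collision. Finally, for the algorithm, every ingredient -- block decomposition, the ear decomposition, the greedy orderings, and even-cycle $2$-choosability -- is computed by a single depth-first search, and since the reductions are applied along a structure of linear size, the $L$-colouring is produced in $O(m)$ time, as claimed.
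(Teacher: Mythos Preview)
The paper does not prove \cref{gallaitree}: it is quoted from \cite{Bor77,ERT79} and used as a black box (the paper's own contribution in this section is the subsequent \cref{l:leafBlo,l:bigL} and \cref{l:colourG}). So there is no ``paper's proof'' to compare against, but your sketch still deserves scrutiny, and it has two real gaps.

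First, the cut-vertex reduction is wrong as stated. You propose to colour the two sides independently (each has slack at the cut vertex) and then write that ``the pieces are pasted by permuting colours at the cut vertex.'' That manoeuvre is valid for ordinary $k$-colouring but not for list colouring: the lists at different vertices are unrelated, so if the cut vertex receives colour $3$ on one side and colour $5$ on the other, no permutation of one side will in general respect all of that side's lists. The correct handling is sequential, not parallel: fix the colour of the cut vertex by colouring one side first, then propagate that constraint into the other side. Which side to colour first, and why the second side can then be completed, is exactly where the hypothesis ``some block is neither a clique nor an odd cycle'' has to be invoked, and your sketch does not do this.

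Second, your slack reduction is muddled. You want to delete a vertex $v$ with $|L(v)|>d(v)$, colour $G-v$, and then colour $v$ last; you justify colouring $G-v$ by saying ``its relevant part is still not a Gallai tree.'' But $G-v$ can perfectly well be a Gallai tree (delete a degree-$2$ vertex from a theta graph). The right justification is different and is precisely the paper's \cref{l:bigL}: once one vertex has slack, a greedy colouring along a search tree rooted at that vertex succeeds regardless of the block structure. Your $2$-connected case you yourself flag as incomplete; the ear-decomposition-plus-chord-deletion plan does not, as written, explain where ``not complete'' enters, and the termination of ``delete a chord whose removal keeps the graph $2$-connected and non-complete'' is not established. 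The classical route (find an induced path $a\,v\,b$ with $ab\notin E$ and $G-\{a,b\}$ connected, then exploit the common neighbour $v$) is cleaner and is what \cite{ERT79} actually does.
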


We need now to study $L$-colourings of Gallai trees.  Let $G$ be a
Gallai tree together with a list assignment $L$.  Suppose that $G$ has
a cut-vertex and consider a leaf block $B$ attaching at $v$.  We say
that $L$ is \emph{$B$-uniform} if the list $L(u)$ is the same for all
$u \in V(B) \setminus \{v\}$ and satisfies $|L(u)|=d(u)$.  When $L$ is
$B$-uniform, we define the list assignment $L_{\overline{B}}$ of
$G-V(B - \{v\})$ as follows: for all $w\neq v$,
$L_{\overline{B}}(w) = L(w)$ and
$L_{\overline{B}}(v) = L(v) \setminus L(u)$ for some, and thus any,
$u\in V(B)\setminus \{v\}$.

\begin{lemma}
  \label{l:leafBlo}
  If a Gallai tree $G$ has a cut-vertex $v$, a leaf block $B$
  attaching at $v$, and a list assignment $L$ such that $L$ is
  $B$-uniform, then $G$ is $L$-colourable if and only if
  $G - V(B - \{v\})$ is $L_{\overline{B}}$-colourable.
\end{lemma}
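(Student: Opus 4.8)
The plan is to exploit the fact that, since $G$ is a Gallai tree, the leaf block $B$ is either a complete graph $K_r$ or an odd cycle $C_{2t+1}$, and that every vertex $u\in V(B)\setminus\{v\}$ belongs to a unique block of $G$ (namely $B$, as $v$ is the only cut-vertex of $G$ lying in $B$), so $d_G(u)=d_B(u)$. Writing $S$ for the common list of the vertices in $V(B)\setminus\{v\}$, $B$-uniformity gives $|S|=d_B(u)$, that is, $|S|=r-1$ when $B=K_r$ and $|S|=2$ when $B=C_{2t+1}$. The whole lemma will follow from one auxiliary claim: \emph{for a colour $\alpha$, the block $B$ admits a proper colouring in which $v$ receives $\alpha$ and each $u\in V(B)\setminus\{v\}$ receives a colour of $S$ if and only if $\alpha\notin S$.}

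First I would prove the claim. If $\alpha\notin S$, the edges incident with $v$ impose no constraint on the colours used on $V(B)\setminus\{v\}$, so it suffices to colour $B-v$ from $S$: when $B=K_r$, $B-v$ is $K_{r-1}$ and, since $|S|=r-1$, any bijection $V(K_{r-1})\to S$ works; when $B=C_{2t+1}$, $B-v$ is a path on $2t$ vertices, each with the same $2$-element list $S$, which is properly coloured by alternating the two colours of $S$. Conversely, suppose $\alpha\in S$. If $B=K_r$, the $r-1$ neighbours of $v$ in $B$ form a clique and, being adjacent to $v$, must all avoid $\alpha$; but $|S\setminus\{\alpha\}|=r-2<r-1$, so this clique cannot be properly coloured within $S$. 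If $B=C_{2t+1}$, write $S=\{a,b\}$ and say $\alpha=a$; the two neighbours $u_1,u_{2t}$ of $v$ on the cycle must avoid $a$, hence both receive $b$, but the path $u_1u_2\cdots u_{2t}$ with every list equal to $\{a,b\}$ admits only the two alternating colourings, both of which give $u_1$ and $u_{2t}$ different colours since $2t$ is even, a contradiction. This proves the claim.

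Then the lemma is immediate. For the forward direction, let $c$ be an $L$-colouring of $G$; its restriction to $B$ colours $v$ with $c(v)\in L(v)$ and each $u\in V(B)\setminus\{v\}$ with a colour of $S$, so the claim gives $c(v)\notin S$, i.e.\ $c(v)\in L(v)\setminus S=L_{\overline B}(v)$; hence $c$ restricted to $G-V(B-\{v\})$ is an $L_{\overline B}$-colouring. For the converse, let $c'$ be an $L_{\overline B}$-colouring of $G-V(B-\{v\})$; then $c'(v)\in L_{\overline B}(v)=L(v)\setminus S$, so $c'(v)\notin S$, and the claim (with $\alpha=c'(v)$) supplies a proper colouring of $B$ agreeing with $c'$ at $v$ and colouring $V(B)\setminus\{v\}$ from $S$. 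Gluing this to $c'$ yields a colouring of $G$ that is proper (the only edges not already handled lie inside $B$) and respects $L$ (the vertices of $V(B)\setminus\{v\}$ get colours in $S$, which is their list; $v$ gets $c'(v)\in L(v)$; all other vertices keep $c'$), i.e.\ an $L$-colouring of $G$.

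I do not anticipate a serious obstacle: the content lies entirely in the auxiliary claim, and within it in the ``only if'' direction, where the hypothesis that each block of a Gallai tree is a complete graph or an odd cycle is precisely what forbids colouring $B$ when $v$ ``borrows'' a colour from $S$. The only points needing a little care are verifying that $d_G(u)=d_B(u)$ for $u\in V(B)\setminus\{v\}$ (so that $|S|$ takes the stated value) and checking the degenerate case $B=K_2$ — which is covered by the $K_r$ argument with $r=2$.
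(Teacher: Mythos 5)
Your proof is correct and takes essentially the same approach as the paper: the key point in both is that in any $L$-colouring of $G$ the vertex $v$ must avoid the common list $S$ of $V(B)\setminus\{v\}$ (by the parity of the path $B - \{v\}$ in the odd-cycle case, and by a counting argument in the clique case, which the paper omits as ``similar''), and conversely any colour of $v$ outside $S$ extends to $B$. Your version simply spells out the clique case, the identity $d_G(u)=d_B(u)$, and the $K_2$ degeneracy explicitly.
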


\begin{proof}
  We deal only with the case when $B$ is an odd cycle (the case when
  $B$ is a complete graph is similar). Up to a relabelling of the colours, we may
  assume that every vertex of $B - \{v\}$ is assigned the list
  $\{1, 2\}$.

  Suppose that $G - V(B - \{v\})$ is
  $L_{\overline{B}}$-colourable.  In the colouring of
  $G - V(B -\{v\})$, the colours $\{1, 2\}$ are not used for $v$
  (from the definition of $L_{\overline{B}}$), so they can be used to
  colour $B-\{v\}$, showing that  $G$ is $L$-colourable.

  Suppose conversely that $G$ is $L$-colourable.  We note that
  $B - \{v\}$ is a path of odd length and, in any $L$-colouring,
  its ends must receive colours 1 and 2, because of the parity. It
  follows that $v$ is not coloured with 1 or 2.  Therefore, the
  restriction of the colouring to $G - V(B - \{v\})$ is an
  $L_{\overline{B}}$-colouring, showing that $G - V(B - \{v\})$
  is $L_{\overline{B}}$-colourable. 
\end{proof}

When $G$ is a graph together with a degree-list-assignment $L$, we say a
vertex has a \emph{long} list $L(v)$ when $|L(v)|>d(v)$.  

\begin{lemma}
  \label{l:bigL}
  There is an $O(m)$-time algorithm whose input is a connected graph $G$
  together with a degree-list-assignment $L$ such that at least one
  vertex has a long list, and whose output is an $L$-colouring of $G$. 
\end{lemma}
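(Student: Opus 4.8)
The plan is to colour $G$ greedily along a reverse search ordering rooted at a vertex with a long list, exactly as in the proof of \cref{3c_nd} (which is itself an adaptation of Lov\'asz's proof of Brooks' theorem). First I would pick a vertex $v$ with $|L(v)| > d_G(v)$, and, since $G$ is connected, perform a breadth-first or depth-first search from $v$ to obtain an ordering $v = v_1, v_2, \dotsc, v_n$ of $V(G)$ in which every $v_i$ with $i \geq 2$ has a neighbour $v_j$ with $j < i$. This search takes $O(n+m)$ time, which is $O(m)$ since $G$ is connected.

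Next I would colour the vertices in the order $v_n, v_{n-1}, \dotsc, v_1$. When $v_i$ with $i \geq 2$ is reached, its neighbour $v_j$ with $j < i$ has not yet been coloured, so at most $d_G(v_i) - 1 \leq |L(v_i)| - 1$ of its neighbours are already coloured; hence some colour of $L(v_i)$ is still available, and we assign $v_i$ such a colour. Finally, when $v_1 = v$ is reached, at most $d_G(v) < |L(v)|$ of its neighbours have been coloured, so a colour of $L(v)$ remains available. By construction the result is an $L$-colouring of $G$.

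It remains to argue the $O(m)$ running time. The key point is that for each $v_i$ it suffices to examine only the first $d_G(v_i) + 1$ entries of $L(v_i)$, since at most $d_G(v_i)$ colours are forbidden at $v_i$; thus the total cost of scanning lists is $O(\sum_i d_G(v_i)) = O(m)$. To test in constant time whether a candidate colour is currently forbidden at $v_i$, I would maintain a single global array indexed by colour and use a timestamping trick: on processing $v_i$, write the value $i$ into the array entry of $c(u)$ for each already-coloured neighbour $u$ of $v_i$, and then scan $L(v_i)$ for the first colour whose array entry differs from $i$. This adds only $O(\sum_i d_G(v_i)) = O(m)$ work overall, so the whole algorithm runs in $O(m)$ time. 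The only real obstacle here is this bookkeeping for the time bound --- ensuring that no vertex causes us to touch more than $O(d_G(v_i))$ colours and that forbidden-colour queries are answered in constant time; the combinatorial correctness is immediate from the reverse-search greedy argument.
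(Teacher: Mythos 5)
Your proposal is correct and follows essentially the same route as the paper: identify a vertex $v$ with a long list, take a search ordering of the connected graph rooted at $v$, and greedily $L$-colour the vertices in reverse order, which works because every vertex other than $v$ has an uncoloured neighbour when it is processed and $v$ itself has a long list. Your additional bookkeeping (timestamping to keep the list scans within $O(\sum_i d_G(v_i)) = O(m)$) simply makes explicit the implementation details the paper leaves implicit in invoking the greedy colouring algorithm.
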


\begin{proof}
  In time $O(m)$, a vertex $v$ whose list is long can be identified.
  The algorithm then runs a search of the graph (a depth-first search, for instance)
  starting at $v$.  This gives a linear ordering of the vertices
  starting at $v$: $v=v_1 < v_2 < \cdots < v_n$, such that, for every
  $i \in \{2, 3, \dotsc, n\}$, the vertex~$v_i$ has at least one neighbour
  $v_j$ with $j<i$.  The greedy colouring algorithm starting at $v_n$
  then yields an $L$-colouring of $G$.
\end{proof}

\begin{proposition}
\label{l:colourG}
  There is an $O(m)$-time algorithm whose input is a Gallai tree $G$
  together with a degree-list-assignment $L$, and whose output is an
  $L$-colouring of $G$ or a certificate that no such colouring exists.
\end{proposition}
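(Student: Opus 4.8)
The plan is to give a recursive algorithm that repeatedly strips off leaf blocks whose behaviour is forced. First I would check whether any vertex of $G$ has a long list; if so, \cref{l:bigL} finds an $L$-colouring in $O(m)$ time and we are done. So assume $L$ is a genuine degree-list-assignment in which $|L(v)| = d_G(v)$ for every $v$. If $G$ is $2$-connected (i.e.\ has no cut-vertex), then $G$ is a single block, hence a complete graph or an odd cycle; in this case an $L$-colouring can be decided directly (for $K_n$, check $|\bigcup_v L(v)| \geq n$ together with a bipartite-matching/Hall-type condition — in fact a greedy argument suffices; for an odd cycle, an $L$-colouring exists iff not every list is the same $2$-element set, which is checked in $O(m)$ time). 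Otherwise $G$ has a cut-vertex, so pick a leaf block $B$ attaching at a cut-vertex $v$.

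The key step is to branch on whether the restriction of $L$ to $B$ is $B$-uniform. If $L$ \emph{is} $B$-uniform, then by \cref{l:leafBlo} the graph $G$ is $L$-colourable iff $G' := G - V(B-\{v\})$ is $L_{\overline B}$-colourable, where $L_{\overline B}$ agrees with $L$ except that $L_{\overline B}(v) = L(v) \setminus L(u)$ for $u \in V(B)\setminus\{v\}$. Here $|L_{\overline B}(v)|$ may have dropped below $d_{G'}(v)$, or it may still be $\geq d_{G'}(v)$; either way $G'$ is a smaller Gallai tree (on fewer vertices), and if $v$ now has a long list we invoke \cref{l:bigL}, otherwise we recurse. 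If $L$ is \emph{not} $B$-uniform, then I claim $G$ is $L$-colourable iff $G'=G-V(B-\{v\})$ is $L$-colourable (with $L$ restricted, so that $v$ keeps its full list $L(v)$, which now has length $\geq d_{G'}(v)$): indeed, given any $L$-colouring of $G'$, the vertex $v$ is assigned one colour, and since $B$ with the list of $v$ fixed to a single colour is not degree-choosable only when $B$ is a complete graph or odd cycle with all of $V(B)\setminus\{v\}$ carrying the same $d(u)$-list missing $c(v)$ — a routine extension of the classical Brooks/Gallai-tree argument shows that a leaf block which is a clique or odd cycle can always be extended from a colouring of $v$ provided its lists on $V(B)\setminus\{v\}$ are not all identical of size $d(u)$, which is exactly the failure of $B$-uniformity (up to the corner case where $L(v)$ is forced; this is handled by noting $|L(v)|=d(v)>d_B(v)$ relative to $B$ so $v$ effectively has a long list within $B$). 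The point is that in the non-$B$-uniform case the obstruction to extending disappears, so we may simply delete $B-\{v\}$.

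Putting the recursion together: each step either terminates via \cref{l:bigL} (linear time) or deletes at least the interior of one leaf block and recurses on a strictly smaller Gallai tree. If we ever reach a $2$-connected graph, we decide it directly. If at some point the current list assignment has $|L_{\overline B}(v)| < d(v)$ for the new cut-vertex $v$ and $v$'s current block forces a conflict — concretely, if in a $B$-uniform contraction we reach a block $B'$ with $|L_{\overline B}(v)| = 0$, or a complete-graph/odd-cycle leaf block on which $L_{\overline B}$ is again uniform and $|L_{\overline B}(v)|$ has shrunk to the point that no colour survives — we output the certificate that no $L$-colouring exists; correctness of this follows by unwinding the ``iff'' statements from \cref{l:leafBlo} and the non-uniform-case claim. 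The total running time is $O(m)$ since each block is processed once and the bookkeeping (identifying cut-vertices, leaf blocks, checking $B$-uniformity, updating one list) is linear in the size of that block; the block-cut tree can be computed once at the start in $O(m)$ time.

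The main obstacle I expect is pinning down precisely, in the non-$B$-uniform case, the claim that deleting $B-\{v\}$ is sound — i.e.\ that a leaf block $B$ (a clique or odd cycle) together with a non-uniform assignment $L$ on $V(B)$ can always be $L$-coloured after $v$ has been given \emph{any} colour from $L(v)$. For a clique this is an easy Hall/greedy argument once one list differs; for an odd cycle the parity obstruction (all interior lists equal to $\{1,2\}$) is exactly the $B$-uniform case, so non-uniformity rules it out, but one must be careful about which list is ``long'' relative to $B$ and handle the two-vertex odd "cycle" degeneracy and the case $v$ has degree $1$ into $B$. Making this clean — perhaps by phrasing it as: every leaf block that is \emph{not} $B$-uniform is degree-choosable \emph{rel.\ $v$} in the obvious sense — is where the care lies; everything else is routine recursion and bookkeeping.
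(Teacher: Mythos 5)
There is a genuine gap in your non-$B$-uniform case, and it is exactly at the point you flagged as ``where the care lies.'' Your claim is that if the lists on $V(B)\setminus\{v\}$ are not all identical, then \emph{any} colour given to $v$ (coming from an $L$-colouring of $G' = G - V(B-\{v\})$) extends into $B$. For clique blocks this is fine, but for odd-cycle blocks it is false. Take $B$ an odd cycle $v\,p_1p_2p_3p_4\,v$ inside a Gallai tree with no long lists, so $|L(p_i)|=2$, and set $L(p_1)=L(p_4)=\{1,3\}$, $L(p_2)=L(p_3)=\{1,2\}$. These lists are not all equal, so $L$ is not $B$-uniform; yet if $v$ receives colour $3$, then $p_1$ and $p_4$ are forced to $1$, $p_2$ is forced to $2$, $p_3$ is forced to $1$, and the edge $p_3p_4$ is monochromatic -- no extension exists. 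So ``$G$ is $L$-colourable iff $G'$ is, and any colouring of $G'$ extends'' cannot be proved the way you propose, and an algorithm that colours $G'$ first (note that in this case $v$ has a long list in $G'$, so your recursion immediately hands $G'$ to \cref{l:bigL} and gets an essentially arbitrary colour at $v$) can get stuck at the extension step. Non-uniformity only rules out the parity obstruction for the \emph{original} $2$-lists on the cycle; it does not rule it out for the reduced lists on the path $B-\{v\}$ after $v$'s colour has been fixed.

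The conclusion you want (a non-uniform leaf block implies $G$ is $L$-colourable) is true, but the correct argument makes the first colour choice \emph{inside} $B$ rather than after colouring $G'$. This is what the paper does: pick adjacent $u,u'\in V(B)-\{v\}$ with $L(u)\neq L(u')$, choose $c\in L(u)\setminus L(u')$, colour $u$ with $c$, delete $u$ and remove $c$ from its neighbours' lists; the remaining graph is connected, still has a degree-list-assignment, and $u'$ now has a long list, so \cref{l:bigL} colours the \emph{entire} remaining graph (the rest of $B$ together with $G'$) in one pass. Ordering the choices this way is what avoids the failed-extension scenario above. Two smaller points: after a $B$-uniform reduction, $|L_{\overline B}(v)|$ can never drop below $d_{G'}(v)$ (in a clique block $|L(u)|$ equals the number of neighbours $v$ loses, and in an odd-cycle block it equals $2$, which is again the number of neighbours $v$ loses), so your worry about $|L_{\overline B}(v)|=0$ and the extra certificate cases built on it are vacuous; and your direct treatment of the terminal $2$-connected case (clique or odd cycle: colourable iff the lists are not all equal, via the same ``differing adjacent lists'' trick) agrees with the paper's terminal step. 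Everything else -- long-list shortcut via \cref{l:bigL}, stripping uniform leaf blocks via \cref{l:leafBlo}, block decomposition once in $O(m)$ -- matches the paper's proof.
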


\begin{proof}
  The algorithm first checks whether one of the lists is long, and if so runs
  the algorithm from \cref{l:bigL}.  Otherwise the classical $O(m)$-time
  algorithm of Tarjan~\cite{tarjan:dfs} finds the block
  decomposition of $G$.  

  Loop step: If $G$ is not a clique or an odd cycle, then it has a cut-vertex $v$
  and a leaf block $B$ attaching at $v$.  The algorithm checks whether $L$
  is $B$-uniform (which is easy in time $O(|V(B)|)$), and if so, as in the proof of \cref{l:leafBlo}, colours the
  vertices of $B - \{v\}$, removes them, updates the list $L(v)$, and
  repeats the loop step  again.  

  If $B$ is not uniform, then the algorithm identifies in $B - \{v\}$ two adjacent
  vertices $u, u'$ with different lists.  So,
  up to swapping $u$ and $u'$, there is a colour $c$ in $L(u)$ that is not present in
  $L(u')$.  Then the algorithm gives colour $c$ to $u$, removes $u$
  from $G$, and removes colour $c$ from the lists of all neighbours of $u$.
  The resulting graph is a connected graph together with a
  degree-list-assignment, and the list of $u'$ is long.  Therefore,
  we may complete the colouring by \cref{l:bigL}.   

  Hence, we may assume that the algorithm repeats the loop step
  until the removal of leaf blocks finally leads to a clique or an odd
  cycle.  Then, if all lists of vertices are equal, obviously no
  colouring exists, and the sequence of calls to \cref{l:leafBlo}
  certifies that $G$ has no $L$-colouring.  Otherwise, a colouring can
  be found 
  by \cref{l:bigL}.
\end{proof}

\begin{proof}[Proof of \cref{fewbigvs}]
Let $X$ be the set of $p$ vertices of degree more than $k$ in $G$.
We guess what could be the colouring on those vertices. There are at most $k^p$ possibilities.

For each, we check whether it can be extended to a $k$-colouring of the whole graph.
To do so we consider $H= G-X$, and for every vertex $v\in V(G)\setminus X$, we use the list assignment $L(v)$ given by the list of colours in $\seq{k}$ that are not used on a neighbour of $v$ in $X$.
Clearly, $|L(v)|\geq k - |N(v)\cap X| \geq d_H(x)$, so we have a degree-list-assignment.

Next we find the connected components of $H$ in $O(n+m)$.
Then for each component $C$, we check if $C$ is a Gallai tree or not.  If not, then we use the
$O(m)$ algorithm of \cref{gallaitree} to $L$-colour $C$.  If it is
a Gallai tree, then we rely on \cref{l:colourG}.  

The running time of the algorithm described above is $k^pO(n+m)$. If $k > p$, then we may assume, without loss of generality, that only the first $p$ colours are used on the $p$ vertices of degree more than $k$.  Therefore, we have to try only $p^p$ possibilities for colouring these vertices. Thus, we obtain an algorithm that runs in time $\min\{k^p,p^p\} \cdot O(n+m)$.
\end{proof}

\Cref{fewbigvs} immediately implies a fixed-parameter tractability result.
\begin{corollary}
  The problem \textsc{$k$-colourability}, when parameterized by the number of vertices of degree more than $k$, is FPT.
\end{corollary}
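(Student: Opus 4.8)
The plan is to read the corollary off directly from \cref{fewbigvs}. Recall that a parameterized problem is \emph{fixed-parameter tractable} if it can be solved in time $f(\ell)\cdot\mathrm{poly}(N)$, where $\ell$ is the parameter, $N$ is the input size, and $f$ is an arbitrary computable function of $\ell$ alone. Here the instance is a graph $G$ (together with $k$, which may be part of the input), the parameter is $p=p_k(G)$, the number of vertices of degree more than $k$, and $N=O(n+m)$.

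First I would invoke \cref{fewbigvs}, which supplies an algorithm that, given such a $G$, either outputs a $k$-colouring of $G$ or certifies that none exists, in time $\min\{k^p,p^p\}\cdot O(n+m)$. The only thing to verify is that this running time has the required shape even when $k$ is not treated as a constant. If $k\le p$ then $\min\{k^p,p^p\}=k^p\le p^p$; if $k>p$ then certainly $\min\{k^p,p^p\}\le p^p$. So in all cases the running time is bounded by $p^p\cdot O(n+m)$. Taking $f(p)=p^p$, which is computable and depends on $p$ alone, this algorithm witnesses the fixed-parameter tractability of \textsc{$k$-colourability} parameterized by $p$.

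I do not expect any obstacle at this stage: all of the work is already done in \cref{fewbigvs} (and, underneath it, in the Gallai-tree and list-colouring analysis culminating in \cref{l:colourG}), and the corollary amounts to the remark that the factor $\min\{k^p,p^p\}$ is of the form $f(p)$ while the remaining factor $O(n+m)$ is polynomial --- indeed linear --- in the input size. If desired, one could record the slightly stronger statement that the parameter dependence is at worst $p^p$ and that the algorithm runs in linear time for each fixed value of the parameter.
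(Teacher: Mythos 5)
Your proposal is correct and matches the paper's reasoning: the corollary is stated as an immediate consequence of \cref{fewbigvs}, and your observation that $\min\{k^p,p^p\}\le p^p$ is a computable function of the parameter alone (so the bound has FPT form even when $k$ is part of the input) is exactly the point the $\min$ in that theorem is designed to deliver.
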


Let us consider now the problem restricted to the case when $G$ is
planar. Then only the $k=3$ case makes sense: for $k\le 2$, the
problem is polynomial-time solvable, while for $k\ge 4$ the colouring always
exists by the Four Colour Theorem. For $k=3$, \cref{fewbigvs}
gives an algorithm with running time $3^p\cdot O(n+m)$. On general
graphs, this is essentially best possible, in the following sense. The
{\em Exponential-Time Hypothesis (ETH)}, formulated by Impagliazzo,
Paturi, and Zane \cite{MR1894519}, implies that $n$-variable
\textsc{3-SAT} cannot be solved in time $2^{o(n)}$. It is known that ETH further implies that \textsc{$3$-colourability} on an $n$-vertex graph cannot be solved in time $2^{o(n)}$ \cite{survey-eth-beatcs}. It follows that in the algorithm given by \cref{fewbigvs} for  \textsc{$3$-colourability}, the exponential dependence on $p$ cannot be improved to $2^{o(p)}$: as $p$ is at most the number of vertices, such an algorithm could be used to solve \textsc{$3$-colourability} in time $2^{o(n)}$ on any graph.
\begin{corollary}
Assuming ETH, there is no $2^{o(p)}\cdot n^{O(1)}$ time algorithm for \textsc{$3$-colourability}, where $p$ is the number of vertices with degree more than 3.
\end{corollary}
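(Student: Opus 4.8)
The plan is a one-line contrapositive argument using the ETH-based lower bound for ordinary \textsc{$3$-colourability} recalled just above: since \textsc{$3$-colourability} on an $n$-vertex graph cannot be solved in time $2^{o(n)}$ under ETH (by the results of Impagliazzo--Paturi--Zane \cite{MR1894519} together with the standard transfer to \textsc{$3$-colourability} \cite{survey-eth-beatcs}), it suffices to show that a hypothetical $2^{o(p)}\cdot n^{O(1)}$-time algorithm for the parameterized problem would yield a $2^{o(n)}$-time algorithm for the unparameterized one.

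First I would note the trivial bound $p\le n$: the number of vertices of degree more than $3$ in an $n$-vertex graph is at most $n$. Suppose some algorithm solves \textsc{$3$-colourability} in time $2^{g(p)}\cdot n^{c}$ with $g(p)=o(p)$ and $c$ a constant. Feed it an arbitrary $n$-vertex graph; since $p\le n$, its running time is at most $2^{h(n)}\cdot n^{c}$, where $h(n):=\max_{0\le q\le n}g(q)$. The function $h$ is nondecreasing, and $g(q)=o(q)$ forces $h(n)=o(n)$ as well (given $\varepsilon>0$, pick $q_0$ with $g(q)\le\varepsilon q$ for all $q\ge q_0$; then $h(n)\le\max\{h(q_0),\,\varepsilon n\}$, so $h(n)\le\varepsilon n$ for all sufficiently large $n$). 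Hence the running time is $2^{o(n)}\cdot n^{O(1)}=2^{o(n)}$, contradicting the ETH-based lower bound, and the corollary follows.

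There is no genuine obstacle here; the only thing worth checking is the elementary fact that composing the bound $g=o(\cdot)$ with $p\le n$ still yields $o(n)$, together with $n^{O(1)}=2^{O(\log n)}=2^{o(n)}$. I would close by observing that this lower bound matches the $3^{p}\cdot O(n+m)$ upper bound of \cref{fewbigvs} for $k=3$ up to the base of the exponent, so the exponential dependence on $p$ cannot be improved to $2^{o(p)}$ under ETH; and that the identical argument yields the same lower bound for \textsc{$k$-colourability} for every fixed $k\ge 3$, since the $2^{o(n)}$ lower bound is known there too.
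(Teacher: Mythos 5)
Your proposal is correct and follows essentially the same route as the paper: the paper likewise observes that $p\le n$, so a $2^{o(p)}\cdot n^{O(1)}$-time algorithm would solve \textsc{$3$-colourability} in time $2^{o(n)}$ on arbitrary graphs, contradicting the ETH-based lower bound quoted from \cite{survey-eth-beatcs}. Your extra care with the little-$o$ composition (passing to the nondecreasing majorant $h$) and the remark that $n^{O(1)}=2^{o(n)}$ are just explicit versions of steps the paper leaves implicit.
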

However, on planar graphs we can do substantially better. There are
several examples in the parameterized-algorithms literature
\cite{DBLP:conf/soda/ChitnisHM14,DBLP:journals/cj/DemaineH08,DBLP:conf/icalp/KleinM12,DBLP:conf/soda/KleinM14,DBLP:conf/fsttcs/LokshtanovSW12,DBLP:conf/focs/PilipczukPSL14}
where significantly better algorithms are known when the problem is
restricted to planar graphs, and, in particular, a square root appears in
the running time.  In most cases, the square root comes from the use
of the Excluded Grid Theorem for planar graphs, stating that if a
planar graph has treewidth $w$, then it contains an $\Omega(w)\times
\Omega(w)$ grid minor. Often this result is invoked not on the input
graph itself, but on some other graph derived from it in a nontrivial
way. This is also the case with this problem.

\begin{theorem}\label{cor:planar}
  Let $G$ be a planar graph with at most $p$ vertices of degree more than
  $3$.  There is a $2^{O(\sqrt{p})}(n+m)$-time algorithm for $3$-colouring $G$,
  or determining no such colouring exists.
\end{theorem}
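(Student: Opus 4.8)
The plan is to reduce, in linear time, to an equivalent instance on a planar graph with only $O(p)$ vertices, and then to invoke the fact that an $N$-vertex planar graph has treewidth $O(\sqrt N)$ (equivalently, the Excluded Grid Theorem for planar graphs). \emph{Setup and first reduction.} Let $X$ be the set of (at most) $p$ vertices of degree more than $3$, so every vertex of $H:=G-X$ has degree at most $3$. Exactly as in the proof of \cref{fewbigvs}, $G$ is $3$-colourable if and only if there is a proper $3$-colouring $\phi$ of $G[X]$ such that, for every component $C$ of $H$, the degree-list-assignment $L_\phi(v)=\{1,2,3\}\setminus\phi(N_G(v)\cap X)$ on $C$ admits an $L_\phi$-colouring. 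By \cref{gallaitree}, a component $C$ that is not a Gallai tree is $L_\phi$-colourable for \emph{every} $\phi$, so it imposes no constraint; I would discard all such components (and also test separately whether some component is $K_4$, which would force a negative answer). The surviving components are Gallai trees, and, being subcubic, they have treewidth at most $3$.

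\emph{Compression to $O(p)$ vertices.} For a surviving component $C$, write $S_C:=N_G(C)\cap X$ for its boundary; the constraint $C$ imposes on a colouring of $X$ is the relation ``$\phi|_{S_C}$ extends to an $L_\phi$-colouring of $C$''. I would establish three facts, using planarity of $G$ together with the block structure of Gallai trees and the leaf-block peeling of \cref{l:leafBlo} and \cref{l:colourG}. First, a component $C$ with at most two edges to $X$ imposes no constraint at all: a connected subcubic graph other than $K_4$, equipped with a degree-list-assignment of total deficiency $\sum_v(3-|L(v)|)\le 2$, always has an $L$-colouring (it is either not a Gallai tree, hence degree-choosable, or the small deficiency is absorbed by peeling off a low-degree vertex or a leaf block, reducing to the deficiency-at-most-$1$ case, which one checks directly); such components can simply be deleted. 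Second, only $O(p)$ components have at least three edges to $X$: those with at least three \emph{distinct} $X$-neighbours become, after contracting each to a single vertex, pairwise non-adjacent vertices of degree at least $3$ in a planar graph, so there are at most $2p$ of them; the remaining ``binary-type'' components act on pairs $\{x,x'\}$ that form a planar graph on $X$ (again after contraction), so at most $3p$ pairs remain after merging. Third, each surviving component $C$ can be replaced by an equivalent gadget of size $O(|S_C|)$ preserving exactly the set of extendable colourings of $S_C$: one peels off every leaf block carrying no edge to $X$, contracts every block-path carrying no edge to $X$, and prunes redundant edges between a single vertex of $X$ and $C$, leaving a core whose block-tree has $O(|S_C|)$ blocks. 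Since $\sum_C|S_C|=O(p)$ over the $O(p)$ surviving components, folding the unary (singleton-boundary) constraints into per-vertex domains and realising the binary constraints by small planar gadgets yields a planar graph $G^\star$ with $O(p)$ vertices and per-vertex colour lists of size at most $3$, such that $G^\star$ is list-$3$-colourable if and only if $G$ is $3$-colourable; all of this is performed in $O(n+m)$ time.

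\emph{Finishing.} As $G^\star$ is planar with $O(p)$ vertices, $\mathrm{tw}(G^\star)=O(\sqrt p)$, and a tree decomposition of width $O(\sqrt p)$ can be found in time $2^{O(\sqrt p)}\cdot O(p)$ (indeed faster, via a constant-factor approximation). Standard dynamic programming for list-$3$-colouring over a width-$w$ tree decomposition runs in $3^{O(w)}\cdot O(p)$ time, so we solve $G^\star$, and hence $G$, in $3^{O(\sqrt p)}\cdot O(p)$ time; together with the $O(n+m)$ preprocessing this yields the claimed $2^{O(\sqrt p)}(n+m)$ bound.

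\emph{The main obstacle.} The treewidth bound and the dynamic program will be routine; essentially all of the work is in the compression step. The delicate points are: proving that components with at most two edges to $X$ never obstruct a colouring (a fiddly but elementary case analysis of list-colourings of subcubic graphs, leaning on Brooks' theorem and the Gallai-tree machinery of \cref{gallaitree}, \cref{l:bigL}, \cref{l:colourG}); and compressing a Gallai-tree component to a gadget of size \emph{linear in its boundary} while simultaneously (i) preserving exactly the induced relation on the boundary, (ii) keeping the result planar when it is glued back into $G$, and (iii) — the subtlest issue — handling the case where a single vertex of $X$ sends many edges into one component, which creates many ``portals'' in that component that must be recognised as redundant and removed, so that the compressed size depends only on the number of distinct $X$-neighbours and not on the boundary multiplicity.
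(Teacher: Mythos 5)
Your overall architecture (discard components of $G-X$ that are not Gallai trees, then exploit planarity to get treewidth $O(\sqrt p)$) is sound in spirit, but the proposal hinges on a compression step that is asserted rather than proved, and that step contains real gaps. First, your planarity count only bounds the number of \emph{pairs} $\{x,x'\}$ and the number of components with three or more distinct $X$-neighbours; the number of ``binary-type'' components attached to a single pair is unbounded, so the kernel bound stands or falls with the merging-and-gadget step. That step requires showing that the conjunction of the extendability relations of unboundedly many components on a pair (a rather arbitrary subset of $\{1,2,3\}^2$) can be realised by a \emph{constant-size planar} list-colouring gadget -- this is not automatic (simple relations such as forbidding a single ordered pair already require some care), and you give no argument. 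Second, replacing a surviving component by an equivalent gadget of size $O(|S_C|)$ while preserving the exact relation on the boundary, the planar embedding after gluing, and handling many parallel attachments to one vertex of $X$, is essentially a protrusion-replacement theorem; you correctly flag it as ``the main obstacle'' but it is precisely where the proof would have to live. (Your first claim, that a Gallai-tree component with at most two edges to $X$ never obstructs, is true and provable via \cref{l:bigL} plus a leaf-block count, but it too is left as a sketch.)

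The paper's proof shows that none of this machinery is needed, because the excluded-grid bound can be applied to $G$ itself rather than to a kernel. Since each component of $G-X$ may be assumed to be a planar Gallai tree, and such a graph has treewidth at most $3$ (its blocks are cliques of size at most $4$ or odd cycles), deleting the $p$ vertices of $X$ reduces the treewidth of the planar graph $G$ to a constant. By the planar Excluded Grid Theorem, a planar graph of treewidth $w$ contains an $\Omega(w)\times\Omega(w)$ grid minor, so reducing its treewidth to a constant requires $\Omega(w^2)$ vertex deletions; hence $p=\Omega(w^2)$, i.e.\ $\mathrm{tw}(G)=O(\sqrt p)$, and the standard $3$-colouring dynamic program runs directly on (an approximate tree decomposition of) $G$ in time $2^{O(\sqrt p)}(n+m)$. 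I recommend you adopt this route: it replaces your entire compression argument with one treewidth observation, and avoids both the gadget-realisability and the planarity-preservation issues.
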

\begin{proof}
  Let $X$ be the set of $p$ vertices of degree more than $3$ in $G$.
If a component $C$ of $G-X$ is not a Gallai tree, then,
by \cref{gallaitree}, we can extend a colouring of $G \setminus C$ to a colouring of $G$ in linear time (similar to the proof of \cref{fewbigvs}).  
Thus, we may assume that each component of $G-X$ is a Gallai tree.
It is well known that the treewidth of a graph is the maximum treewidth of one of its blocks (see, for example, \cite{lozin2004band}).
Since a planar Gallai tree has no cliques of size more than $4$, and an odd cycle has treewidth $2$, a planar Gallai tree has treewidth at most $3$.
Therefore, the deletion of $X$ from $G$ reduces the treewidth of the resulting graph to a constant.
Let $w$ be the treewidth of $G$.
Since $G$ is planar, it contains a $\Omega(w) \times \Omega(w)$ grid minor,
so $\Omega(w^2)$ vertices need to be deleted in order to reduce the treewidth of $G$ to a constant. This implies that $p=|X|=\Omega(w^2)$, or in other words, $G$ has treewidth $O(\sqrt{p})$.
Therefore, after computing a constant-factor approximation of the tree decomposition (using, for example, the algorithm of Bodlaender et al.~\cite{DBLP:conf/focs/BodlaenderDDFLP13} or Kammer and Tholey \cite{DBLP:conf/soda/KammerT12}), we can use a standard $3$-colouring on the tree decomposition to solve the problem in time $2^{O(\sqrt{p})} \cdot n$.
\end{proof}
It is known that, assuming ETH, \textsc{3-colourability} cannot be
solved in time $2^{o(\sqrt{n})}$ on planar graphs
\cite{survey-eth-beatcs}. This implies that the $2^{O(\sqrt{p})}$
factor in \cref{cor:planar} is best possible: assuming ETH, it
cannot be replaced by $2^{o(\sqrt{p})}$.

\bibliographystyle{abbrv}
\bibliography{library}
\end{document}